\let\old@tocline\@tocline
\let\section@tocline\@tocline
\newcommand{\subsection@dotsep}{4.5}
\newcommand{\subsubsection@dotsep}{4.5}
     \leaders\hbox{$\m@th
        \mkern \subsection@dotsep mu\hbox{.}\mkern \subsection@dotsep mu$}\hfill
\let\subsection@tocline\@tocline
\let\@tocline\old@tocline
     \leaders\hbox{$\m@th
        \mkern \subsubsection@dotsep mu\hbox{.}\mkern \subsubsection@dotsep mu$}\hfill
\let\subsubsection@tocline\@tocline
\let\@tocline\old@tocline
\let\old@l@subsection\l@subsection
\let\old@l@subsubsection\l@subsubsection
\def\@tocwriteb#1#2#3{%
  \begingroup
    \@xp\def\csname #2@tocline\endcsname##1##2##3##4##5##6{%
      \ifnum##1>\c@tocdepth
      \else \sbox\z@{##5\let\indentlabel\@tochangmeasure##6}\fi}%
    \csname l@#2\endcsname{#1{\csname#2name\endcsname}{\@secnumber}{}}%
  \endgroup
  \addcontentsline{toc}{#2}%
    {\protect#1{\csname#2name\endcsname}{\@secnumber}{#3}}}%
\newlength{\@tocsectionindent}
\newlength{\@tocsubsectionindent}
\newlength{\@tocsubsubsectionindent}
\newlength{\@tocsectionnumwidth}
\newlength{\@tocsubsectionnumwidth}
\newlength{\@tocsubsubsectionnumwidth}
\newcommand{\settocsectionnumwidth}[1]{\setlength{\@tocsectionnumwidth}{#1}}
\newcommand{\settocsubsectionnumwidth}[1]{\setlength{\@tocsubsectionnumwidth}{#1}}
\newcommand{\settocsubsubsectionnumwidth}[1]{\setlength{\@tocsubsubsectionnumwidth}{#1}}
\newcommand{\settocsectionindent}[1]{\setlength{\@tocsectionindent}{#1}}
\newcommand{\settocsubsectionindent}[1]{\setlength{\@tocsubsectionindent}{#1}}
\newcommand{\settocsubsubsectionindent}[1]{\setlength{\@tocsubsubsectionindent}{#1}}
\renewcommand{\l@section}{\section@tocline{1}{\@tocsectionvskip}{\@tocsectionindent}{}{\@tocsectionformat}}%
\renewcommand{\l@subsection}{\subsection@tocline{2}{\@tocsubsectionvskip}{\@tocsubsectionindent}{}{\@tocsubsectionformat}}%
\renewcommand{\l@subsubsection}{\subsubsection@tocline{3}{\@tocsubsubsectionvskip}{\@tocsubsubsectionindent}{}{\@tocsubsubsectionformat}}%
\newcommand{\@tocsectionformat}{}
\newcommand{\@tocsubsectionformat}{}
\newcommand{\@tocsubsubsectionformat}{}
\def\csname toc@1format\endcsname{\@tocsectionformat}
\def\csname toc@2format\endcsname{\@tocsubsectionformat}
\def\csname toc@3format\endcsname{\@tocsubsubsectionformat}
\newcommand{\settocsectionformat}[1]{\renewcommand{\@tocsectionformat}{#1}}
\newcommand{\settocsubsectionformat}[1]{\renewcommand{\@tocsubsectionformat}{#1}}
\newcommand{\settocsubsubsectionformat}[1]{\renewcommand{\@tocsubsubsectionformat}{#1}}
\newlength{\@tocsectionvskip}
\newcommand{\settocsectionvskip}[1]{\setlength{\@tocsectionvskip}{#1}}
\newlength{\@tocsubsectionvskip}
\newcommand{\settocsubsectionvskip}[1]{\setlength{\@tocsubsectionvskip}{#1}}
\newlength{\@tocsubsubsectionvskip}
\newcommand{\settocsubsubsectionvskip}[1]{\setlength{\@tocsubsubsectionvskip}{#1}}
\patchcmd{\tocsection}{\indentlabel}{\makebox[\@tocsectionnumwidth][l]}{}{}
\patchcmd{\tocsubsection}{\indentlabel}{\makebox[\@tocsubsectionnumwidth][l]}{}{}
\patchcmd{\tocsubsubsection}{\indentlabel}{\makebox[\@tocsubsubsectionnumwidth][l]}{}{}
\newcommand{\@sectypepnumformat}{}
\renewcommand{\contentsline}[1]{%
  \expandafter\let\expandafter\@sectypepnumformat\csname @toc#1pnumformat\endcsname%
  \csname l@#1\endcsname}
\newcommand{\@tocsectionpnumformat}{}
\newcommand{\@tocsubsectionpnumformat}{}
\newcommand{\@tocsubsubsectionpnumformat}{}
\newcommand{\setsectionpnumformat}[1]{\renewcommand{\@tocsectionpnumformat}{#1}}
\newcommand{\setsubsectionpnumformat}[1]{\renewcommand{\@tocsubsectionpnumformat}{#1}}
\newcommand{\setsubsubsectionpnumformat}[1]{\renewcommand{\@tocsubsubsectionpnumformat}{#1}}
\renewcommand{\@tocpagenum}[1]{%
  \hfill {\mdseries\@sectypepnumformat #1}}
\let\oldappendix\appendix
\renewcommand{\appendix}{%
  \leavevmode\oldappendix%
  \addtocontents{toc}{%
    \protect\settowidth{\protect\@tocsectionnumwidth}{\protect\@tocsectionformat\sectionname\space}%
    \protect\addtolength{\protect\@tocsectionnumwidth}{2em}}%
}
\let\oldtableofcontents\tableofcontents
\renewcommand{\tableofcontents}{%
  \vspace*{-\linespacing}
  \oldtableofcontents}
\newtheorem{theorem}{Theorem}[section]
\newtheorem{lemma}[theorem]{Lemma}
\newtheorem{proposition}[theorem]{Proposition}
\newtheorem{conj}[theorem]{Conjecture}
\theoremstyle{definition}
\newtheorem{remark}[theorem]{Remark}
\newtheorem{example}[theorem]{Example}
\newtheorem{defn}[theorem]{Definition}
\colorlet{lightblue}{blue!30!white}
\colorlet{lightorange}{orange!30!white}
\title{Intersections of dual $\operatorname{SL}_3$-webs}
\author{Linhui Shen}
\address{Department of Mathematics, Michigan State University, 619 Red Cedar Road, 302
Wells Hall, East Lansing, Michigan 48824, United States}
\email{linhui@math.msu.edu}
\author{Zhe Sun}
\address{Key Laboratory of Wu Wen-Tsun Mathematics, Chinese Academy of Sciences, \newline
School of Mathematical Sciences, University of Science and Technology of China, 96 Jinzhai Road, 230026 Hefei, Anhui, China}
\email{sunz@ustc.edu.cn}
\author{Daping Weng}
\address{Department of Mathematics, University of California, Davis, One Shields Avenue, Davis, CA 95616, United States}
\email{dweng@ucdavis.edu}
\date{\today}
\keywords{$\operatorname{SL}_3$-webs, intersection numbers, tropical points of $\mathcal{A}$ moduli space, mapping class group equivariance.}
\begin{document}

\begin{abstract}
We introduce a topological intersection number for an ordered pair of $\operatorname{SL}_3$-webs on a decorated surface. Using this intersection pairing between reduced $(\operatorname{SL}_3,\mathcal{A})$-webs and a collection of $(\operatorname{SL}_3,\mathcal{X})$-webs associated with the Fock--Goncharov cluster coordinates, 
we provide a natural combinatorial interpretation of the bijection from the set of reduced $(\operatorname{SL}_3,\mathcal{A})$-webs to the tropical set $\mathcal{A}^+_{\operatorname{PGL}_3,\hat{S}}(\mathbb{Z}^t)$, as established by Douglas and Sun in \cite{DS20a, DS20b}. We provide a new proof of the flip equivariance of the above bijection, which is crucial for proving the Fock--Goncharov duality conjecture of higher Teichm\"uller spaces for $\operatorname{SL}_3$.
\end{abstract}
\maketitle

\tableofcontents

\section{Introduction}

The ${\rm SL}_3$-webs, introduced by Kuperberg \cite{K96}, are oriented trivalent graphs on a disk used to construct natural bases for the tensor invariant spaces of irreducible representations of ${\rm SL}_3$. These webs have been extended to surfaces and have been utilized in studying character varieties and skein algebras associated with those surfaces. Sikora and Westbury \cite{SW07} developed a theory of confluence of graphs, and as an application, constructed natural bases of ${\rm SL}_3$-skein algebras associated with surfaces by using reduced ${\rm SL}_3$-webs. 

\smallskip 

Let $\hat{S}$ be a compact oriented topological surface with marked points on its boundary and punctures inside. There are two versions of reduced webs on $\hat{S}$, denoted by $\mathscr{W}_{\hat{S}}^{\mathcal{A}}$ the set of reduced $\mathcal{A}$-webs and by $\mathscr{W}_{\hat{S}}^{\mathcal{X}}$ the set of reduced $\mathcal{X}$-webs respectively, whose definition will be recalled in Section \ref{subsec2.1}. Recently, there are several works focus on the $\mathcal{A}$-webs (cf. \cite{DS20a, DS20b, FS22, Kim20, NY21}), while other works focus on the $\mathcal{X}$-webs (cf. \cite{FP16, Fr22, FrP23, IK22, IOS22, IY23}).

\smallskip 

The present paper, for the first time, puts these two versions of webs together and proposes a mapping class group equivariant intersection pairing 
\begin{equation}
\label{intersection.paring,intro}
\mathbb{I}: ~~ \mathscr{W}_{\hat{S}}^\mathcal{A}\times \mathscr{W}_{\hat{S}}^{\mathcal{X}} \longrightarrow \frac{1}{3} \mathbb{Z}.
\end{equation}
We place the pairing $\mathbb{I}$ in the context of the cluster duality of  higher Teichm\"uller spaces.

\subsection{Background}

The moduli  spaces $\mathcal{X}_{{\rm G},\hat{S}}$ and $\mathcal{A}_{{\rm G}^L,\hat{S}}$, introduced by Fock and Goncharov in \cite{FG06}, are variations of the character varieties of the fundamental group $\pi_1(S)$ into a pair of Langlands dual groups ${\rm G}$ and ${\rm G}^L$. For ${\rm PGL_2}$, these moduli spaces recover the enhanced Teichm\"uller space and the decorated Teichm\"uller space of Penner in \cite{Pen87}.
Fock and Goncharov proposed a  duality conjecture (\cite[Conjecture 12.3]{FG06}) relating $\mathcal{X}_{{\rm G},\hat{S}}$ and $\mathcal{A}_{{\rm G}^L,\hat{S}}$. This conjecture has found numerous interactions with cluster algebras, representation theory, mirror symmetry, low dimensional topology, and Poisson
geometry.  

\smallskip

To elaborate, when $\hat{S}$ has no marked points, the duality conjecture asserts that the ring of regular functions on $\mathcal{X}_{{\rm G},\hat{S}}$ admits a canonical linear basis parameterized by the integer tropical points of $\mathcal{A}_{{\rm G}^L,\hat{S}}$, and the parametrization is equivariant under the action of the mapping class group of the surface.\footnote{The original conjecture also includes the reversed direction; namely, the ring of regular functions on $\mathcal{A}_{{\rm G}^L,\hat{S}}$ has a canonical linear basis parameterized by the tropical points of $\mathcal{X}_{{\rm G}^L,\hat{S}}$. However, when $\hat{S}$ has punctures, the conjecture does not hold and requires a slight modification.} As explained in \cite{GS15}, when  $\hat{S}$ has marked points on its boundary, the space $\mathcal{X}_{{\rm G},\hat{S}}$ should be replaced by $\mathcal{P}_{{\rm G},\hat{S}}$, an enhancement of $\mathcal{X}_{{\rm G}, \hat{S}}$.  It is  further conjectured that when the tropical points of $\mathcal{A}_{{\rm G}^L,\hat{S}}$ are cut out by the tropical potential function, the dual space should be changed to the character variety of $\hat{S}$. 

\smallskip

The duality conjecture is proven for ${\rm G}=\operatorname{PGL}_2$ in \cite[Theorem 12.3]{FG06} by relating Thurston's transversely integer measured laminations \cite{Thu79} to the tropical  coordinates using the topological intersection numbers with the ideal triangulation of $\hat{S}$, and to the trace functions of the laminations. 
Motivated by the lamination approach,  the  duality for ${\rm G}=\operatorname{PGL}_3$ has recently been investigated  by using webs.  The first step is to provide a canonical bijection between  the tropical points and the reduced webs, which has been established by  Douglas and Sun \cite{DS20a,DS20b}. 
When $\hat{S}$ is a punctured surface without marked points, based on the bijection of Douglas and Sun, Kim \cite{Kim20}  showed that the highest degrees of the trace functions of the reduced webs are equal to the tropical coordinates of $\mathcal{A}_{{\rm SL}_3, \hat{S}}(\mathbb{Z}^t)$, completing the proof of the duality conjecture for ${\rm PGL}_3$. 

\smallskip 

An alternative approach to the duality conjecture is through cluster theory. Fock and Goncharov generalized the duality conjecture to the setting of cluster ensembles (\cite[Conjecture 4.1]{FG09}). Using scattering diagrams and broken lines, Gross, Hacking, Keel, and Kontsevich \cite{GHKK18} proved the duality conjecture under certain conditions, which follow from the existence of cluster Donaldson-Thomas transformations (a.k.a. reddening sequences).  Goncharov and Shen \cite{GS18, GS19} showed that the pair $(\mathcal{P}_{{\rm G},\hat{S}}, \mathcal{A}_{{\rm G}^L,\hat{S}})$ forms a cluster ensemble, and explicitly determine their cluster Donaldson-Thomas transformations when $\hat{S}$ is not a once-punctured surface $S_{g,1}$. For the once punctured surface $S_{g,1}$, by slightly adjusting Goncharov--Shen's construction, Fraser and Pylyavskyy \cite{FrP23} obtained the cluster Donaldson-Thomas transformation  for $\mathcal{A}_{\operatorname{SL}_n,S_{g,1}}$ when $n>2$. Combining the above results, the duality conjecture holds for these cases. 

\smallskip 

Comparing the two distinct approaches to the duality conjecture mentioned above, one advantage of the lamination-web approach is the explicitness in its construction,
which makes it more convenient for concrete calculations. The theta bases in \cite{GHKK18}, although applicable to all cluster ensembles with reddening sequences, are challenging to compute explicitly.  Recently, Mandel and Qin \cite{MQ23} proved that for $\operatorname{PGL}_2$, the theta bases and the Fock--Goncharov's lamination bases coincide. For $\operatorname{PGL}_3$, the comparison between the theta bases and the web bases is an interesting direction for future research. 
Moreover, the lamination-web approach aligns more closely with the study of skein algebras and the quantum trace maps \cite{BW11, LY23}, which establish connections between two different quantizations of the rings of regular functions of the ${\rm SL}_n$ character varieties.

\subsection{Main results}
Following Knutson and Tao \cite{KT98}, a hive is an arrangement of numbers satisfying a family of rhombus conditions. In Section \ref{sec:hives},  we recall the set ${\bf Hive}(\mathcal{T})$ of ${\rm SL}_3$ hives associated with an arbitrary ideal triangulation $\mathcal{T}$ of $\hat{S}$. As in \eqref{oact.rec}, the hives for different ideal triangulations are related by  a sequence $\varphi_{\mathcal{T}', \mathcal{T}}$ of octahedron relations
\[
\varphi_{\mathcal{T}', \mathcal{T}}: ~ {\bf Hive}(\mathcal{T}) \stackrel{\sim}{\longrightarrow}  {\bf Hive}(\mathcal{T}').
\]
In Section \ref{sec.42}, we review the cone $\mathcal{A}^+_{{\rm PGL}_3, \hat{S}}(\mathbb{Z}^t)$ of integer tropical points cut out by the tropical potential function of Goncharov and Shen \cite{GS15}. The paper {\it loc.cit.} constructed a  bijection $\alpha_{\mathcal{T}}$ between $\mathcal{A}^+_{{\rm PGL}_3, \hat{S}}(\mathbb{Z}^t)$ and  ${\bf Hive}(\mathcal{T})$, and obtained the following commutative diagram of bijections
\begin{equation} 
\label{comm.hive.1}
\begin{tikzcd}
 & {\bf Hive}(\mathcal{T}) \arrow{dd}{\varphi_{\mathcal{T}',\mathcal{T}}}\\
\mathcal{A}_{{\rm PGL}_3,\hat{S}}^+(\mathbb{Z}^t) \arrow{ru}{\alpha_{\mathcal{T}}} \arrow{rd}{\alpha_{\mathcal{T}'}} &  \\%
& {\bf Hive}(\mathcal{T}')
\end{tikzcd}.
\end{equation}

In Section \ref{sec:intersection}, we associate with each 
ideal triangulation $\mathcal{T}$ of $\hat{S}$ a family $\{[V_i]\}$ of $\mathcal{X}$-webs, which consists of the oriented ideal edges of $\mathcal{T}$ and the inward tripods inside  ideal triangles of $\mathcal{T}$. These webs correspond to the Fock-Goncharov cluster $\mathcal{A}$-coordinates.  Using the intersection pairing with $[V_i]$, we define the following intersection map in Definition \ref{definition:i}
\[
{i}_{\mathcal{T}}: \mathscr{W}_{\hat{S}}^{\mathcal{A}} \longrightarrow \left(\frac{1}{3}\mathbb{Z}\right)^n.
\]

The following theorem is the main result of this paper.

\begin{theorem}
\label{main.theorem1}
The map $i_{\mathcal{T}}$ gives rise to a bijection between $\mathscr{W}_{\hat{S}}^{\mathcal{A}}$ and  ${\bf Hive}(\mathcal{T})$. For any two ideal triangulations $\mathcal{T}$ and $\mathcal{T}'$ of $\hat{S}$, we have the following commutative diagram of bijections
\[ \begin{tikzcd}
 & {\bf Hive}(\mathcal{T}) \arrow{dd}{\varphi_{\mathcal{T}',\mathcal{T}}}\\
\mathscr{W}_{\hat{S}}^\mathcal{A} \arrow{ru}{i_{\mathcal{T}}} \arrow{rd}{i_{\mathcal{T}'}} &  \\%
& {\bf Hive}(\mathcal{T}')
\end{tikzcd}.
\]
Comparing with \eqref{comm.hive.1}, we get a canonical bijection between $\mathscr{W}_{\hat{S}}^{\mathcal{A}}$ and $\mathcal{A}_{{\rm PGL}_3, \hat{S}}^+(\mathbb{Z}^t)$.
\end{theorem}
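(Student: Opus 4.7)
The plan is to prove the theorem in three steps: (i) the map $i_{\mathcal{T}}$ lands in $\mathbf{Hive}(\mathcal{T})$, (ii) it is a bijection, and (iii) under a flip of ideal triangulations the intersection vector transforms by the octahedron relation. Step (iii) yields commutativity of the displayed diagram for arbitrary $\mathcal{T},\mathcal{T}'$, because any two ideal triangulations are connected by a sequence of flips and $\varphi_{\mathcal{T}',\mathcal{T}}$ is by definition the corresponding composition of octahedron relations.

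\textbf{Step (i): hive conditions.} The key observation is that each cluster web $[V_i]$ is supported either on a single oriented ideal edge of $\mathcal{T}$ or on a single inward tripod inside an ideal triangle, so $\mathbb{I}(W,[V_i])$ is a local count. After isotoping a reduced $\mathcal{A}$-web $W$ into minimal position with respect to $\mathcal{T}$, the restriction $W|_t$ inside each triangle decomposes into a short list of elementary pieces (oriented arcs joining edges, corner arcs, and honeycomb- or H-type pieces). I would enumerate these pieces, compute $\mathbb{I}(W,[V_i])$ by summing contributions, and read off the rhombus inequalities of $\mathbf{Hive}(\mathcal{T})$ from the tabulation. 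Since each rhombus condition involves four cluster vertices lying in a single triangle or across a single edge, this reduces to a finite case check.

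\textbf{Step (ii): bijection.} Rather than build an inverse from scratch, I would use the Goncharov--Shen bijection $\alpha_{\mathcal{T}}$ between $\mathcal{A}^+_{\operatorname{PGL}_3,\hat{S}}(\mathbb{Z}^t)$ and $\mathbf{Hive}(\mathcal{T})$, together with the Douglas--Sun bijection between $\mathscr{W}_{\hat{S}}^{\mathcal{A}}$ and the same cone of tropical points. It then suffices to identify $\alpha_{\mathcal{T}}^{-1}\circ i_{\mathcal{T}}$ with the Douglas--Sun assignment, which reduces to checking agreement on the elementary local pieces already listed in Step (i). Alternatively, one can construct an inverse explicitly: the rhombus data of a hive inside a triangle prescribes the multiplicities of each elementary type of strand in $W|_t$, and boundary matching across edges is enforced by the intersection numbers with the edge webs $[V_i]$.

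\textbf{Step (iii): flip equivariance, the main obstacle.} Given a flip across an edge $e$ bounding a quadrilateral $Q = t_1\cup t_2$, the webs $[V_i]$ supported outside $Q$ are unchanged, so the corresponding coordinates of $i_{\mathcal{T}}(W)$ and $i_{\mathcal{T}'}(W)$ agree tautologically. The nontrivial content is a local identity inside $Q$ between the two intersection vectors and the octahedron relation. I expect this to be the genuine difficulty of the proof, since $W|_Q$ can have intricate configurations, for instance arcs crossing $e$ in both directions, H-webs straddling $e$, tripods in either triangle, or honeycombs spanning $Q$. The plan is to classify $W|_Q$ up to isotopy and minimal position, compute both the $\mathcal{T}$- and $\mathcal{T}'$-intersection vectors on each configuration, and check them against the octahedron recursion. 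Once this local identity is in hand, the global diagram commutes for any flip sequence, hence for any pair $(\mathcal{T},\mathcal{T}')$.
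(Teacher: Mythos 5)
Your outline reproduces the paper's overall architecture (local hive check, explicit inverse by gluing, flip equivariance via a local identity on the quadrilateral), but it rests on an unproved assumption that is in fact the technical core of the argument: that $\mathbb{I}([W],[V_i])$ is ``a local count.'' The pairing is defined as a minimum over all transverse representatives in the two homotopy classes, and it is not automatic that the minimizing representative of an oriented ideal edge or of an inward tripod can be confined to the triangle where it naturally lives --- a priori it could wander through other triangles and, because of the asymmetric $\tfrac{1}{3}$-versus-$\tfrac{2}{3}$ weighting, pick up a smaller total. The paper spends most of Section \ref{sec:hives} establishing exactly this: it introduces the dual surfacoid $W^*$, equips it with an asymmetric intersection metric modeled on the lattice graph $\Gamma$, proves geodesic lemmas for boundary paths of nets and necklaces (Lemmas \ref{lem: homotoping a path} and \ref{lem: homotoping a tripod}), and then runs a universal-cover, crossing-sequence argument (Lemmas \ref{lem: representative of ideal arc} and \ref{lem: minimal intersection for tripod}) to show that straight edges and tripods restricted to their triangles realize the minimum. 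Without a substitute for this, all three of your steps begin from an unjustified premise.

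Your Step (iii) also underestimates what ``classify $W|_Q$ and check'' entails: the restriction of a reduced web to the quadrilateral is an infinite family parametrized by honeycomb and corner-arc multiplicities in two triangles plus a minimal ladder, so there is no finite case check; you would need closed-form piecewise-linear formulas for both intersection vectors (the analogue of Lemma \ref{dwndij} for the quadrilateral) and then verify the octahedron relation as an identity of max-plus expressions. The paper instead proves a single exchange identity $\mathbb{I}([W],[A])+\mathbb{I}([W],[B])=\max\{\mathbb{I}([W],[C]),\mathbb{I}([W],[D])\}$ (Lemma \ref{prop:flip invariant}), obtained by splicing a leg of the tripod with the diagonal at a common vertex of $W^*$ and analyzing two cases according to the corner-arc counts; the four octahedron relations then follow by applying this identity four times. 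Finally, your primary route for Step (ii) --- identifying $\alpha_{\mathcal{T}}^{-1}\circ i_{\mathcal{T}}$ with the Douglas--Sun map --- would be logically sufficient but would reduce the theorem to a corollary of the result it is meant to reinterpret; your alternative (explicit inverse by restricting the hive to triangles, as in Proposition \ref{hive=red=tri}, and gluing with minimal ladders) is what the paper actually does, with uniqueness of the glued web supplied by Douglas--Sun's confluence lemma.
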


A further tropicalization of the duality between $\mathcal{A}_{{\rm G}^L, \hat{S}}$ and $\mathcal{P}_{{\rm G}, \hat{S}}$ yields a mysterious mapping class group equivariant pairing 
\begin{equation}
\label{trop. duality.paring.intro}
\mathcal{I}:~\mathcal{A}_{{\rm G}^L, \hat{S}}(\mathbb{R}^t) \times \mathcal{P}_{{\rm G},\hat{S}}(\mathbb{R}^t) \longrightarrow \mathbb{R}.
\end{equation}
A better understanding of the pairing 
is important to the study of the moduli spaces $\mathcal{A}_{{\rm G}^L, \hat{S}}$ and $\mathcal{P}_{{\rm G}, \hat{S}}$. 
We conjecture a natural bijection between $\mathscr{W}_{\hat{S}}^{\mathcal{X}}$ and the cone $\mathcal{P}^{+}_{{\rm PGL}_3, \hat{S}}(\mathbb{Z}^t)$. See \cite{IK22} for recent results in this direction. We expect that our intersection paring \eqref{intersection.paring,intro} provides a geometric-combinatorial way to realize the pairing $\mathcal{I}$.
We wish to apply the paring \eqref{intersection.paring,intro} to investigate the compactification of the higher Teichm\"uller spaces in the future. See \cite{FG11} for related results for ${\rm PGL}_2$.

\subsection{Historical Comments}

We include a list of results related to this paper.

\smallskip 

 Douglas and Sun \cite{DS20a, DS20b} constructed a mapping class group  equivariant bijection
\[
 \Phi: ~ \mathscr{W}_{\hat{S}}^{\mathcal{A}}\stackrel{\sim}{\longrightarrow}  \mathcal{A}_{{\rm PGL}_3,\hat{S}}^+(\mathbb{Z}^t). 
\]
Frohman and Sikora \cite{FS22} provided a different topological integer coordinate system for the reduced $\operatorname{SL}_3$-webs depending on the ideal triangulation chosen. 
Theorem \ref{main.theorem1} provides an alternative interpretation of the bijection $\Phi$ in terms of the intersections.

\smallskip 

For the disk case and for $\operatorname{SL}_3$, Fontaine, Kamnitzer, and Kuperberg \cite{FKK13} showed that reduced $\mathcal{A}$-webs are dual to CAT(0) triangulated diskoids in the affine building, and that each reduced web with minuscule boundary $\vec{\lambda}$ corresponds to a component of the Satake fiber $F(\vec{\lambda})$. See Theorem 1.4 of {\it loc. cit.} In \cite{Ak20}, Akhmejanov showed that each diskoid is the intersection of min-convex set and max-convex hulls of a generic point of a component of $F(\vec{\lambda})$. Goncharov and Shen \cite{GS15} give a bijection between the top dimensional components of the Satake fibers and the tropical points. Thus, the above works implicitly imply a  bijection between reduced webs and tropical points. In this paper, we crucially use the dual graphs of reduced webs on surfaces, which we propose to call {\it surfacoids}, as a slight extension of the diskoids. 

\medskip 

Several approaches in the literature aim to understand the pairing \eqref{trop. duality.paring.intro}. In \cite{Le21}, Le utilized the length-minimal weighted networks in the affine buildings to interpret tropical Fock-Goncharov coordinates on $\mathcal{A}_{{\rm SL}_n, \hat{S}}$ for disks. However, as explained in Section 4.4 of {\it loc.cit.}, crucial ingredients are still missing to fully realize the pairing $\mathcal{I}$. In some special cases, a representation-theoretic understanding of the pairing $\mathcal{I}$ can be found in \cite{Fei23}. It is worth mentioning that our intersection number pairing \eqref{intersection.paring,intro} is more suitable for topological visualization and can be broadly defined for any ordered pairs of webs on marked surfaces, not limited to disks. 

\smallskip 

In the ongoing joint work \cite{ISY} for the $\operatorname{Sp}_4$ case, Ishibashi, Yuasa, and Sun constructed the intersection number coordinates for the crossroad webs \cite{IY22}.

\medskip

\noindent {\bf Acknowledgements}.  We are grateful to Alexander Goncharov for his encouragement and enlightening discussions. 
We thank Ian Le for the conversations on the interactions with affine buildings.
L. Shen is supported by the NSF grant DMS-2200738.

\medskip 

\section{Definitions}
This section recalls the definitions of $\mathcal{A}$- and $\mathcal{X}$- webs of type $A_2$ over decorated surfaces. We introduce a canonical intersection pairing between them. 

\subsection{Ideal triangulations}
A {\em decorated surface} $\hat{S}$ is a pair $(S,m_b)$ where $S$ is a connected oriented compact surface with boundary and $m_b$ is a finite (possibly empty) set of marked points on its boundary considered up to isotopy. Note that $\partial S\backslash m_b$ is a union of circles and open intervals, which are called {\em boundary circles} and {\em boundary intervals} respectively. The boundary circles are considered as {\em punctures}. We denote the set of punctures by $m_p$.

An {\em ideal triangulation} $\mathcal{T}$ of $\hat{S}$ is a maximal collection of arcs joining the marked points and the punctures of $\hat{S}$ such that these arcs are pairwise disjoint on the interior of $S$ and non-homotopic. The edges (respectively triangles) of the ideal triangulations are called {\em ideal edges} (respectively {\em ideal triangles}). 
In this paper, we focus on decorated surfaces that admit ideal triangulations. Suppose the underlying surface of $\hat{S}$ is of genus $g$ and of $c$ many boundary components. Let $m$ be the total number of marked points on $\hat{S}$. A direct calculation shows that every ideal triangulation of $\hat{S}$ has $f$ many ideal triangles and $e$ many ideal edges, where 
\[
f=2c+m+4g-4, \qquad e= 3c+2m+6g-6.
\]

We place two vertices on each ideal edge, and place one vertex on the center of each ideal triangle. As illustrated by Figure \ref{quiver-T}, we draw nine arrows within each ideal triangle, obtaining a quiver $Q_{\mathcal{T}}$. Denote by $\Theta_{\mathcal{T}}$ the set of vertices of $Q_{\mathcal{T}}$. By definition, the cardinality of $\Theta_{\mathcal{T}}$ is 
\[2e+f= 8c+5m+16g-16.\] 
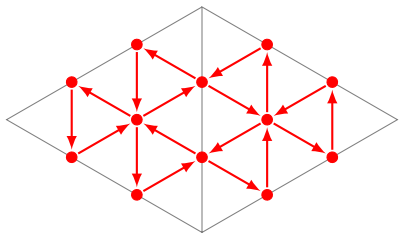
\begin{figure}[H]
\begin{tikzpicture}
\begin{scope}[rotate=-30]
\draw[gray=50!] (0,0)--(120:3)--(60:3)--(0,0)--(-3,0)--(120:3);
\draw[red,fill=red] (120:1) circle(2pt);
\draw[red,fill=red] (120:2) circle(2pt);
\draw[red,fill=red] (60:1) circle(2pt);
\draw[red,fill=red] (60:2) circle(2pt);
\draw[red,fill=red] (180:1) circle(2pt);
\draw[red,fill=red] (180:2) circle(2pt);
\draw[red,fill=red] (120:3)++(1,0) circle(2pt);
\draw[red,fill=red] (120:3)++(2,0) circle(2pt);
\draw[red,fill=red] (120:2)++(1,0) circle(2pt);
\draw[red,fill=red] (120:2)++(-1,0) circle(2pt);
\draw[red,fill=red] (120:1)++(-1,0) circle(2pt);
\draw[red,fill=red] (120:1)++(-2,0) circle(2pt);
 \foreach \position in {(120:2), (120:1)}
    {\draw[-latex, thick, red] \position++(-.1,0) -- ++(-.8,0);
    \draw[-latex, thick, red] \position++(-120:0.9) -- ++(60:0.8);
    \draw[-latex, thick, red] \position++(-1,0)++(-60:0.1) -- ++(-60:0.8);
    \draw[-latex, thick, red] \position++(60:0.9) -- ++(-120:0.8);
    \draw[-latex, thick, red] \position++(0.1,0) -- ++(0.8,0);
    \draw[-latex, thick, red] \position++(1,0)++(120:0.1) -- ++(120:0.8);}

\draw[-latex, thick, red] (120:1)++(-1,0)++(-.1,0) -- ++(-.8,0);
    \draw[-latex, thick, red] (120:1)++(-1,0)++(-120:0.9) -- ++(60:0.8);
    \draw[-latex, thick, red] (120:1)++(-1,0)++(-1,0)++(-60:0.1) -- ++(-60:0.8);
    \draw[-latex, thick, red] (120:2)++(1,0)++(60:0.9) -- ++(-120:0.8);
    \draw[-latex, thick, red] (120:2)++(1,0)++(0.1,0) -- ++(0.8,0);
    \draw[-latex, thick, red] (120:2)++(1,0)++(1,0)++(120:0.1) -- ++(120:0.8);   
\end{scope}    
\end{tikzpicture}
\caption{The quiver $Q_{\mathcal{T}}$ associated with an ideal triangulation $\mathcal{T}$}
\label{quiver-T}
\end{figure}

\subsection{Webs}
\label{subsec2.1}
\begin{defn}\label{defn: SL3 A web}
A {\em $(\operatorname{SL}_3,\mathcal{A})$-web} is an oriented graph embedded in $\hat{S}$ with finitely many connected components, where each component
is one of the following graphs
\begin{itemize}
\item a simple oriented loop,
\item an oriented arc connecting the boundary intervals of $\hat{S}$,
\item an oriented graph such that  every interior vertex is a $3$-valent sink or a $3$-valent source and the rest vertices are 1-valent vertices lying on the boundary intervals of ${\hat{S}}$.
\end{itemize}

A {\em $k$-gon} of a web $W$ is a contractible component of $\hat{S}\backslash W$ with $k$ sides. We say a $k$-gon is internal if all of its vertices are interior vertices of  $W$.  A web is {\em non-elliptic} if it contains no contractible loops and all of its internal $k$-gons have at least $6$ sides. A non-elliptic web is {\em reduced} if each $k$-gon with only one side along $\partial \hat{S}$ must satisfy $k\geq 5$.

Denote by $\mathscr{W}_{\hat{S}}^{\mathcal{A}}$ be the space of reduced 
$(\operatorname{SL}_3,\mathcal{A})$-webs up to homotopy equivalence on $\hat{S}\times [0,1]$. 

\end{defn}

\begin{example} 
\label{exmp2.1} Let $\hat{S}$ be a once-punctured disk with two marked points on its boundary. In Figure \ref{Figure.webs.a}, we represent the puncture and the marked points by red circles. We denote the sinks of the webs by black dots and the sources by white dots. The left web is not reduced because the shaded region is a square with one side along $\partial \hat{S}$. The right web is elliptic because the shaded region is an internal square. 
\begin{figure}[H]
\begin{tikzpicture}[scale=.3]
\begin{scope}
\draw[draw=gray!50!white,fill=gray!50!white] (135:4.5)--(135:2)--(225:2)--(225:4.5) arc (225:135:4.5);
\node (D1) at (135:4.5) {};
\node (E1) at (225:4.5) {};
\node (F1) at (45:4.5) {};
\node (G1) at (-45:4.5) {};
\node (H1) at (135:2) {};
\node (I1) at (225:2) {};
\node (J1) at (45:2) {};
\node (K1) at (-45:2) {};
\draw[-latex] (90:0.8) arc (90:450:0.8);
\draw (135:4.5)--(135:2)--(45:4.5);
\draw (225:4.5)--(225:2) -- (-45:4.5);
\draw (135:2)--(225:2);
\draw (-60:4.5)--(-120:4.5);
\draw (-52:4.5)--(-128:4.5);
\draw[blue] (0,0) circle (4.5cm);
\draw[fill=black] (-60:4.5) circle(4pt); 
\draw[fill=white] (-120:4.5) circle(4pt);
\draw[fill=white] (-52:4.5) circle(4pt); 
\draw[fill=black] (-128:4.5) circle(4pt);
\draw[fill=black] (H1) circle(4pt); 
\draw[fill=white] (I1) circle(4pt);
\draw[fill=white] (D1) circle(4pt); 
\draw[fill=black] (E1) circle(4pt);
\draw[fill=white] (F1) circle(4pt); 
\draw[fill=black] (G1) circle(4pt);
\draw[red,fill=white] (0,0) circle(7pt);    
\draw[red,fill=white] (0,4.5) circle(7pt);
\draw[red,fill=white] (0,-4.5) circle(7pt);

\end{scope}
\begin{scope}[shift={(13,-1.2)}]
\draw[blue] (1,1.2) circle (4.5cm);
\draw[red,fill=white] (1,1.2) circle(7pt);    
\draw[red,fill=white] (1,5.7) circle(7pt);
\draw[red,fill=white] (1,-3.3) circle(7pt);
\begin{scope}[shift={(1,1.2)}]
\draw[fill=white] (184:4.5) circle(4pt);
\draw[fill=white] (207.5:4.5) circle(4pt);
\draw[fill=white] (235:4.5) circle(4pt);
\draw[fill=black]  (60:4.5) circle(4pt);
\draw[fill=black] (7:4.5) circle(4pt);
\draw[fill=black] (-16:4.5) circle(4pt);
\draw[fill=black] (-40.6:4.5) circle(4pt);
\draw[fill=black] (-55:4.5) circle(4pt);
\node (D) at (184:4.9) {};
\node (E) at (205.8:4.9) {};
\node (F) at (-50:4.9) {};
\node (G) at (58.8:5) {};
\node (H) at (7:4.9) {};
\node (I) at (-15.5:4.9) {};
\node (K) at (-37:4.9) {};
\draw (235:4.5) -- (-55:4.5);
\end{scope}
   \draw[draw=gray!50!white,fill=gray!50!white]
    (-60:2)++(1,0) -- ++ (120:1)-- ++(-1,0)-- ++ (-120:1)--++(2,0);
\draw (0,0)++(-2,0)++(60:1) -- (D);
\draw (0,0)++(-2,0)++(-60:1) -- (E);
\draw (-120:2) -- (F);
\draw (120:2) -- (G);
\draw (60:2)++(1,0) -- (H);
\draw (2,0) -- (I);
\draw (-60:2)++(1,0) -- (K);
\foreach \angle in {0,120,240} {
  \begin{scope}[rotate=\angle]
  \draw (0,0) -- ++(-60:1) -- ++(1,0) -- ++(60:1) -- ++(120:1)
        -- ++(-1,0) -- (0,0);
  \filldraw (0:0) circle(4pt) ++(0:2) ++(120:1) circle(4pt)++(-60:2)++(-1,0) circle(4pt);
  \end{scope}}
  \draw (60:1)++(120:1) -- ++(2,0) --++(-120:1);
  \draw (-60:1)++(-120:1) -- ++(2,0) --++(120:1);
    \draw[fill=white] (60:1) circle(4pt);
      \draw[fill=white] (180:1) circle(4pt);
        \draw[fill=white] (300:1) circle(4pt);
          \draw[fill=white] (120:2) circle(4pt);
      \draw[fill=white] (240:2) circle(4pt);
        \draw[fill=white] (0:2) circle(4pt);
    \draw[fill=white] (-60:2)++(1,0) circle(4pt); 
    \draw[fill=white] (60:2)++(1,0) circle(4pt);
\end{scope}
\end{tikzpicture}
\caption{$(\operatorname{SL}_3, \mathcal{A})$-webs on a once-punctured disk}
\label{Figure.webs.a}
\end{figure}
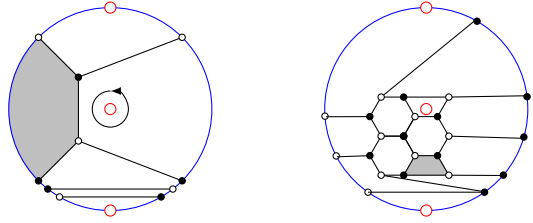
\end{example}

\medskip

\begin{example}
\label{minimal.ladder}
A bigon is a disk with two marked points on its boundary. 
As shown on Figure \ref{figure:bigon}(2), we consider a collection of arcs going between the two boundaries of a bigon such that 
\begin{itemize}
\item every pair of arcs intersect with each other at most once,
\item if two arcs intersect each other, then they point to different boundaries of the bigon.
\end{itemize}
We replace each crossing of the arcs with an ``I", obtaining a web as in Figure \ref{figure:bigon}(1). The webs obtained this way are called {\it minimal ladders}. It is easy to see that every minimal ladder is uniquely determined by its intersection with the bigon. 

\begin{figure}[H]
\includegraphics[scale=0.45]{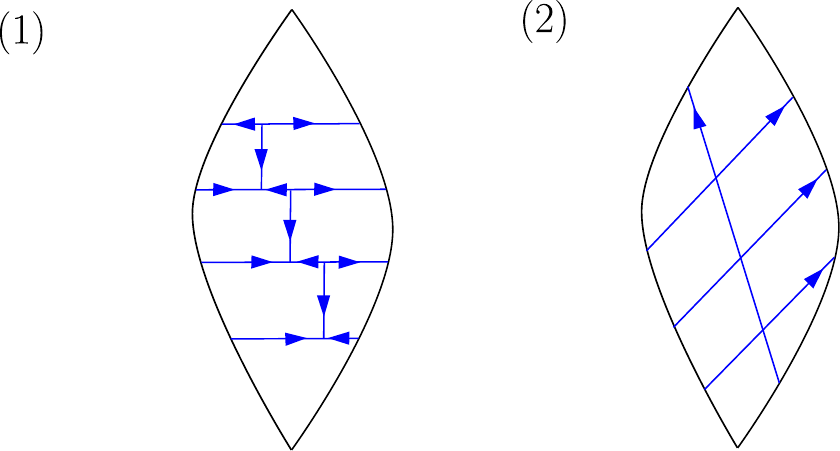}
\caption{(1) A minimal ladder on a bigon. (2) Its schematic diagram.}
\label{figure:bigon}
\end{figure}

Moreover, by choosing one of the two marked points of the bigon, we can label the endpoints of the arcs along each boundary by $1,2,3,\dots, n$ in the order according to their distance from the chosen marked point (there are necessarily the same number of endpoints along each boundary by construction). Along a chosen boundary $\alpha$ of the bigon, we say an arc is \emph{ascending} if its endpoint $i$ at $\alpha$ is smaller than its other endpoint $i'$ at the other boundary. Now for each ascending arc along a boundary, we define its \emph{staircase} to be the collection of ``I''s in the minimal ladder that correspond to crossings between this particular ascending arc and other arcs. For example, if we choose the top marked point in Figure \ref{figure:bigon}(2), then the left boundary has three ascending arcs and the right boundary has one ascending arc; correspondingly, each ``I'' in Figure \ref{figure:bigon}(1) is a staircase for an ascending arc along the left boundary, and the three ``I''s together form a staircase for the ascending arc along the right boundary.

\end{example}

\medskip

\begin{example} 
\label{tri-redu-web}
Let $\Delta$ be a disk with three marked points. 
A reduced $({\rm SL}_3, \mathcal{A})$-web on $\Delta$ is shown on Figure \ref{figure:localtr}(1). In general, every reduced  web on $\Delta$ consists of an oriented honeycomb in the middle and oriented arcs in the corners. Let $\mathbb{N}$ be the set of non-negative integers. Each reduced web on $\Delta$ corresponds to a tuple 
\[
(x, y,z,t,u,v,w) \in \mathbb{Z}\times \mathbb{N}^6.
\]
Here $|x|$ denotes the number of the arcs of the center oriented   honeycomb intersecting with each side of $\Delta$. The sign of $x$ determines the orientation of the honeycomb: when $x$ is positive, then the arrows of the honeycomb are coming out from the center; otherwise,  they are getting into the center. The numbers of oriented corner arcs around each marked points are determined by the nonnegative integers $y,z,t,u,v,w$. Hence, we obtain a bijection
\[
\mathscr{W}_{\Delta}^\mathcal{A}\stackrel{\sim}{=}\mathbb{Z}\times\mathbb{N}^6
\]

 \begin{figure}[H]
\includegraphics[scale=0.45]{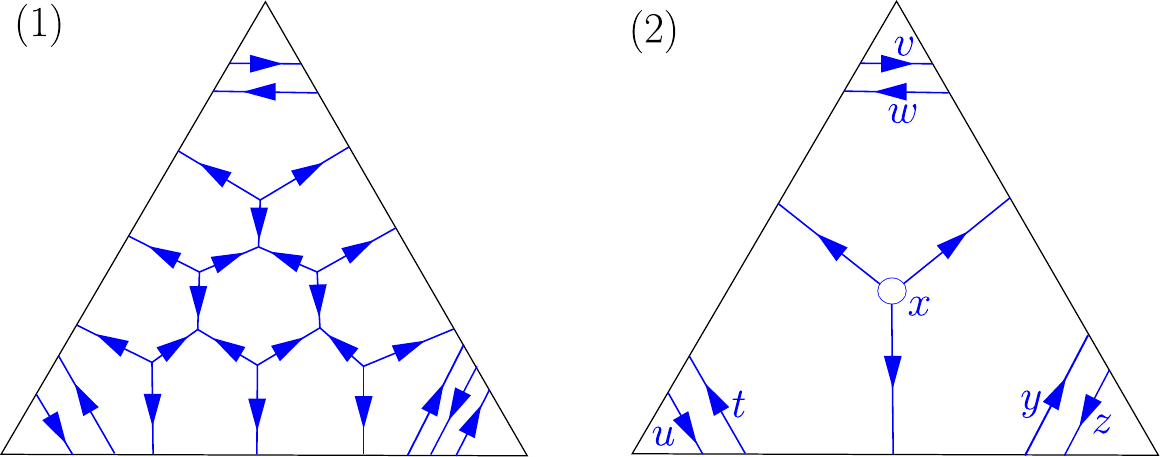}
\caption{A reduced web on $\Delta$. Here $x=3$, $y=2$, $z=t=u=v=w=1$.}
\label{figure:localtr}
\end{figure}
\end{example}

Below we recall the {\it good position} property of reduced webs in Proposition \ref{proposition:gp}. It shows every reduced web can be obtained by gluing minimal ladders and reduced webs on ideal triangles.
The good position property is originally due to Kuperberg \cite{K96} on the disk cases and can be easily generalized to surfaces (see e.g. \cite{DS20a},\cite{FS22}).

Let $\mathcal{T}$ be an ideal triangulation of  $\hat{S}$. The {\em split ideal triangulation} $\mathcal{T}_2$ associated with $\mathcal{T}$ is constructed by splitting each internal ideal edge of  $\mathcal{T}$ into two disjoint homotopic ideal edges. As a result, $\hat{S}$ is cut into bigons and triangles as in Figure \ref{figure:tr}.

\begin{figure}[H]
\includegraphics[scale=0.45]{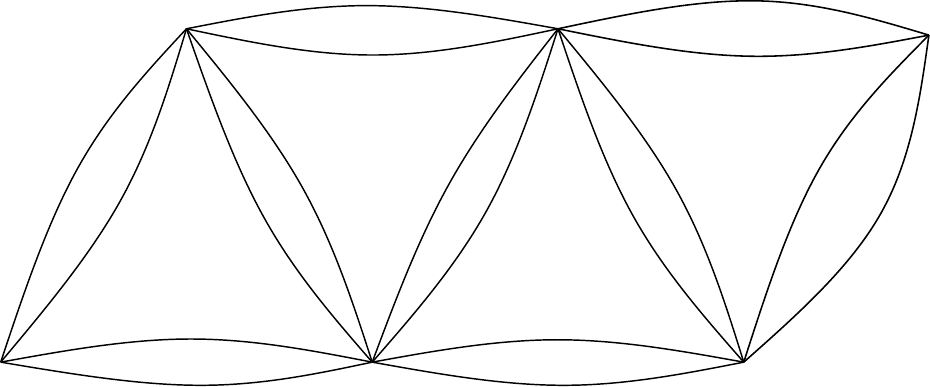}
\caption{Split ideal triangulation.}
\label{figure:tr}
\end{figure}

\begin{proposition}
\label{proposition:gp}
Every reduced $(\operatorname{SL}_3,\mathcal{A})$-web $W$ on $\hat{S}$ can be placed {\em in a good position} with respect to $\mathcal{T}_2$ such that
\begin{enumerate}
\item $W$ intersects transversely  with every edge of  $\mathcal{T}_2$;
\item the restriction of $W$ to every bigon of $\mathcal{T}_2$ is a minimal ladder as in Example \ref{minimal.ladder}; 
\item the restriction of $W$ to every triangle of $\mathcal{T}_2$ is a reduced web as in Example \ref{tri-redu-web}.
\end{enumerate}
\end{proposition}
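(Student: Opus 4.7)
The plan is to proceed by a standard minimization argument combined with the known classification of reduced webs in disks. First, using the compactness of $\hat{S}$ and standard transversality, I would isotope $W$ so that it meets every edge of $\mathcal{T}_2$ transversely away from the trivalent vertices of $W$; this establishes condition~(1). The harder work is arranging conditions~(2) and~(3).

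Next, I would minimize the geometric intersection number of $W$ with the union of edges of $\mathcal{T}_2$ within its isotopy class on $\hat{S}\times[0,1]$. The basic local move is innermost bigon removal: if a strand of $W$ and a portion of an edge $e$ of $\mathcal{T}_2$ cobound an embedded disk $D$ containing no other arcs of $W$, one can isotope the strand across $D$ to eliminate two intersections. For trivalent portions of $W$ one uses that at any source or sink the three half-edges can be pushed simultaneously, so that an entire subtripod slides across such a disk; one can also use orientations at sources/sinks to avoid creating new crossings of $W$ with itself. Because $W$ is reduced, the short elliptic configurations that would otherwise obstruct this process are forbidden, so induction on total intersection number terminates at a minimizing configuration.

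The key step is to show that after minimization the restriction of $W$ to each component of $\hat{S}\setminus\mathcal{T}_2$ is itself a reduced web on that bigon or triangle. For this, I would argue by contradiction: any violating internal $k$-gon or one-boundary-side $k$-gon inside a piece of $\mathcal{T}_2$ is either a corresponding non-reduced face of $W$ in $\hat{S}$ (contradicting that $W$ is reduced) or else its boundary crosses some edge of $\mathcal{T}_2$, in which case the forbidden face together with part of $\mathcal{T}_2$ provides an embedded disk across which $W$ can be isotoped to strictly decrease intersection with $\mathcal{T}_2$, contradicting minimality. Once each restriction is known to be reduced, conditions~(2) and~(3) follow from the disk classifications: the bigon case yields exactly the minimal ladders of Example~\ref{minimal.ladder}, and the triangle case yields the honeycomb-plus-corner-arcs form of Example~\ref{tri-redu-web}, both of which go back to Kuperberg~\cite{K96}.

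The main obstacle, and the place where the reduced hypothesis is used essentially, is the case analysis in the previous paragraph: one must enumerate all possible bad faces (internal $k$-gons with $k\leq 5$ and one-boundary-side $k$-gons with $k\leq 4$) that could appear in the restriction to a bigon or triangle of $\mathcal{T}_2$ and, in each case, exhibit either a globally non-reduced face of $W$ on $\hat{S}$ or a concrete isotopy lowering the intersection count with $\mathcal{T}_2$. After this verification, gluing the pieces back along the edges of $\mathcal{T}_2$ recovers the original $W$ in the desired good position.
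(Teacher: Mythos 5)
The paper itself gives no proof of Proposition \ref{proposition:gp}: it is recalled as a known fact, attributed to Kuperberg \cite{K96} for disks and to \cite{DS20a}, \cite{FS22} for the surface case. Your outline follows the same general strategy as those references (transversality, minimizing intersection with the edges, then analyzing the pieces), so the overall plan is reasonable; but as written it contains a genuine gap at exactly the step you identify as the crux.

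The problem is your claim that every forbidden face appearing in the restriction to a piece of $\mathcal{T}_2$ either is a non-reduced face of $W$ in $\hat{S}$ or can be eliminated by an isotopy that \emph{strictly decreases} the intersection number with $\mathcal{T}_2$. This dichotomy fails for the $4$-gon faces abutting an internal edge $e$ of $\mathcal{T}_2$, i.e.\ an ``H'': two strands crossing $e$ joined by a rung parallel to $e$. Isotoping the rung across $e$ does not change the number of intersections of $W$ with $\mathcal{T}_2$ (the two legs still cross $e$), so minimality gives no contradiction, and such a configuration is not a face of $W$ in $\hat{S}$ either. Indeed these H's genuinely cannot be removed -- they are exactly the rungs of the minimal ladders -- and this is the entire reason the \emph{split} triangulation $\mathcal{T}_2$ with its bigons is used rather than $\mathcal{T}$: after minimizing intersections one must additionally \emph{push} every such rung across the relevant edge into the adjacent bigon (an intersection-preserving move), so that the triangles are left with honeycomb-plus-corner-arc webs and the bigons collect the rungs into ladders. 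Relatedly, your final step is not quite the right statement: the restriction of $W$ to a bigon is \emph{not} a reduced web on that bigon in the sense of Definition \ref{defn: SL3 A web} -- a minimal ladder with at least one rung has a $4$-gon face with one side on the bigon's boundary -- so ``each restriction is reduced, then apply the disk classification'' cannot be the mechanism for condition (2). To repair the argument you need to (i) separate the faces that minimization kills (U-turns, trivalent vertices cobounding triangles with an edge, etc.) from the H-faces that must instead be slid into bigons, and (ii) verify that after this sliding the bigon restrictions satisfy the two combinatorial conditions of Example \ref{minimal.ladder} (pairwise at most one crossing, and crossings only between oppositely-pointing arcs), which again uses reducedness of $W$ on $\hat{S}$. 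The remaining ingredients (internal bad faces of a piece being faces of $W$, and Kuperberg's classification of the resulting triangle webs) are fine as you state them.
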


\medskip

In contrast to the $(\operatorname{SL}_3,\mathcal{A})$-webs, we introduce their dual counterparts, the $(\operatorname{SL}_3,\mathcal{X})$-webs, which have their endpoints placed at the marked points and the punctures of $\hat{S}$. 
\begin{defn}
A {\em $(\operatorname{SL}_3,\mathcal{X})$-web} is an oriented graph embedded in $\hat{S}$ with finitely many connected components, where each component
is one of the following graphs
\begin{itemize}
\item a simple oriented loop,
\item an oriented arc connecting points in $m_b\cup m_p$,
\item a connected oriented graph on $S$ such that each interior vertex is a $3$-valent sink or a $3$-valent source and the rest  vertices are in $m_b\cup m_p$. 
\end{itemize}

A $(\operatorname{SL}_3, \mathcal{X})$-web is {\it reduced} if 
\begin{itemize}
\item it contains no peripheral loops, a.k.a., the loops surrounding a puncture;
\item each of its internal $k$-gons has at least 6 sides;
\item each of its $k$-gons with only one vertex in $m_b\cup m_p$ must satisfy $k\geq 4$.
\end{itemize}
Denote by $\mathscr{W}_{\hat{S}}^{\mathcal{X}}$ the space of reduced $(\operatorname{SL}_3, \mathcal{X})$-webs up to homotopy equivalence on $\hat{S} \times [0, 1]$.
\end{defn}
\begin{example} Figure \ref{figure.x.red} is a reduced $(\operatorname{SL}_3, \mathcal{X})$-web on a once-punctured disk with two marked points. Note that, unlike $\mathcal{A}$-webs,  $\mathcal{X}$-webs can enter into marked points and punctures.
\begin{figure}[H]
\begin{tikzpicture}[scale=.45]
    \draw[blue] (0,0) circle (3cm);
    \draw[fill=black] (0,1) circle(3pt);
    \draw (-1,1)--(1,1)--(0,3)--(-1,1)--(0,-3)--(1,1);
      \draw[fill=white] (1,1) circle(3pt);
    \draw[fill=white] (-1,1) circle(3pt);
    \draw (0,1)--(0,0);
    \draw[red,fill=white] (0,0) circle(5pt);    
    \draw[red,fill=white] (0,3) circle(5pt);
    \draw[red,fill=white] (0,-3) circle(5pt);
\end{tikzpicture}
\caption{A reduced $(\operatorname{SL}_3, \mathcal{X})$-web.}
\label{figure.x.red}
\end{figure}
\end{example}
From simplicity, we will call the aforementioned webs $\mathcal{A}$- and $\mathcal{X}$-webs from now on, respectively, without mentioning $\operatorname{SL}_3$.

\subsection{Intersections} 
\label{sec:intersection}
Let $W$ be an $\mathcal{A}$-web and let $V$ be a $\mathcal{X}$-web on $\hat{S}$ such that they intersect transversely. For each intersection point $p\in W\cap V$,  the {\em intersection number} of the ordered pair $(W,V)$ at $p$ is defined as
\begin{equation*}
\epsilon_p(W,V):=\left\{
\begin{aligned}
&\frac{1}{3} \;\;\;  \text{if $W$ crosses $V$ through $p$ towards the right side of $V$};  \\
&\\
&\frac{2}{3} \;\;\; \text{if $W$ crosses $V$ through $p$ towards the left side of $V$}.
\end{aligned}
\right.
\end{equation*}
The {\em intersection number} of the pair $(W,V)$ is defined as
\begin{equation}
\label{definition:int}
i(W,V):= \sum_{p\in\,W\cap V} \epsilon_p(W,V).
\end{equation}
\begin{example}
Figure \ref{figure:Inttri} shows the intersection numbers for several pairs of webs  on a triangle. 
\begin{figure}[H]
 \includegraphics[scale=0.5]{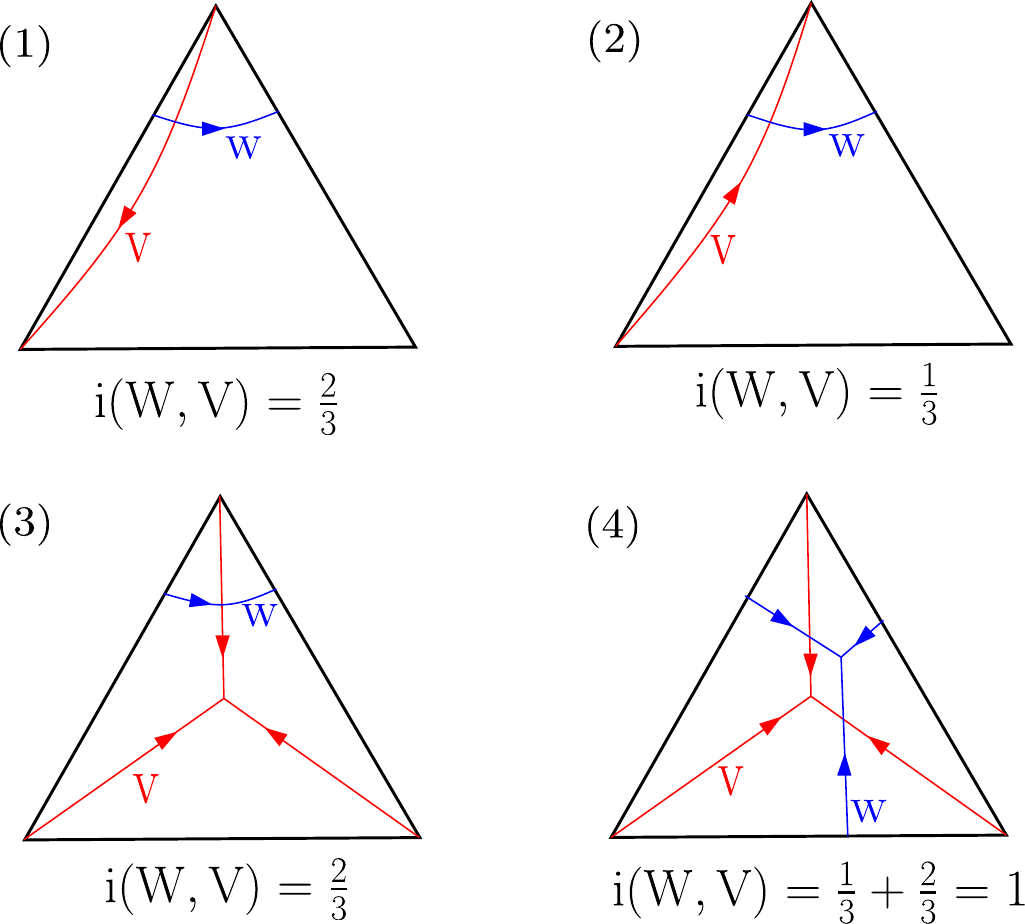}
\caption{Intersection numbers for ordered pairs of webs on a triangle.}
\label{figure:Inttri}
\end{figure}
\end{example}

\begin{defn} We define the intersection paring
\begin{equation}
\mathbb{I}:~ \mathscr{W}_{\hat{S}}^{\mathcal{A}} \times \mathscr{W}_{\hat{S}}^{\mathcal{X}}\longrightarrow \frac{1}{3}\mathbb{Z},\qquad ([W], [V])\, \longmapsto \,\min_{W\in [W],V\in [V]} \left\{i(W,V)\right\}
\end{equation}
where $W$ and $V$ are in the homotopy classes  $[W]$ and $[V]$ respectively such that they intersect  transversely.
\end{defn}

\begin{remark} Note that the intersection number between $[W]$ and $[V]$ only depends on their relative position. As a result, in order to compute $\mathbb{I}([W],[V])$, one can fix the web $W$ and vary $V$ within its homotopy class.
\end{remark}

\begin{remark} Let $\mathfrak{g}$ be a simple Lie algebra and let $\mathfrak{g}^\vee$ be its Langlands dual. We expect the existences of the notions of $(\mathfrak{g}, \mathcal{A})$ and $(\mathfrak{g}^\vee, \mathcal{X})$ , whose oriented edges are labelled by the dominant weights of $\mathfrak{g}$ and $\mathfrak{g}^\vee$ respectively. The change of orientations of an edge is equivalent to changing the label $\lambda$ to $-w_0(\lambda)$, where $w_0$ is the longest Weyl group element. 
\[
\begin{tikzpicture}[scale=2]
\draw[-latex
] (0,0) -- (0,1);
\draw[dashed] (0,0.5) circle (0.5);
\draw[dashed] (3,0.5) circle (0.5);
\draw[latex-] (3,0)--(3,1);
\node at (0.2,0.5) {$\lambda$};
\node at (3.4,0.5) {$-w_0(\lambda)$};
\node at (1.5,0.5) {$=$};
\end{tikzpicture}
\]
Let $X$ and $X^\vee$ be the weight lattices of  $\mathfrak{g}$ and $\mathfrak{g}^\vee$ respectively. There is a canonical pairing
\[
\langle~, ~\rangle: ~ X\times X^\vee \longrightarrow \frac{1}{n}\mathbb{Z},
\]
where $n$ is the determinant of the Cartan matrix of $\mathfrak{g}$. Given a $(\mathfrak{g},\mathcal{A})$ web $W$ and a $(\mathfrak{g},\mathcal{X})$ web $V$, intersecting transversely at $p$ as illustrated by the figure
\begin{center}
\begin{tikzpicture}
\draw[dashed] (0,0) circle (1);
\node at (-.2,-.2) {\tiny $p$};
\node[blue] at (0.5, -.2) {\tiny$W$};
\node[blue] at (0.5, .2) {\tiny $\lambda$};
\node[red] at (-.2, 0.5) {\tiny $V$};
\node[red] at (.2, 0.5) {\tiny $\mu$};
\draw[blue, -latex] (-1,0)--(1,0);
\draw[red, -latex] (0, 1)--(0,-1);
\end{tikzpicture}
\end{center}
We shall define the intersection number of $W$ and $V$ at $p$ as 
\[
\epsilon_p(W, V)= \langle \lambda, \mu \rangle.
\]
\end{remark}

\medskip

Now fix  an ideal triangulation $\mathcal{T}$ of $\hat{S}$. Recall the quiver $Q_{\mathcal{T}}$ with the vertex set $\Theta_{\mathcal{T}}$. We assign a reduced $\mathcal{X}$ web $[V_i]$ to each vertex $i\in \Theta_{\mathcal{T}}$. The vertex at the center of each ideal triangle corresponds to a tripod inside the same triangle, as depicted in Figure \ref{figure:triA}(8). Every vertex at each ideal edge $e$ corresponds to an oriented arc that is homotopy to $e$, with the orientation determined by the position of the vertex on $e$. 

\begin{figure}[H]
\includegraphics[scale=0.6]{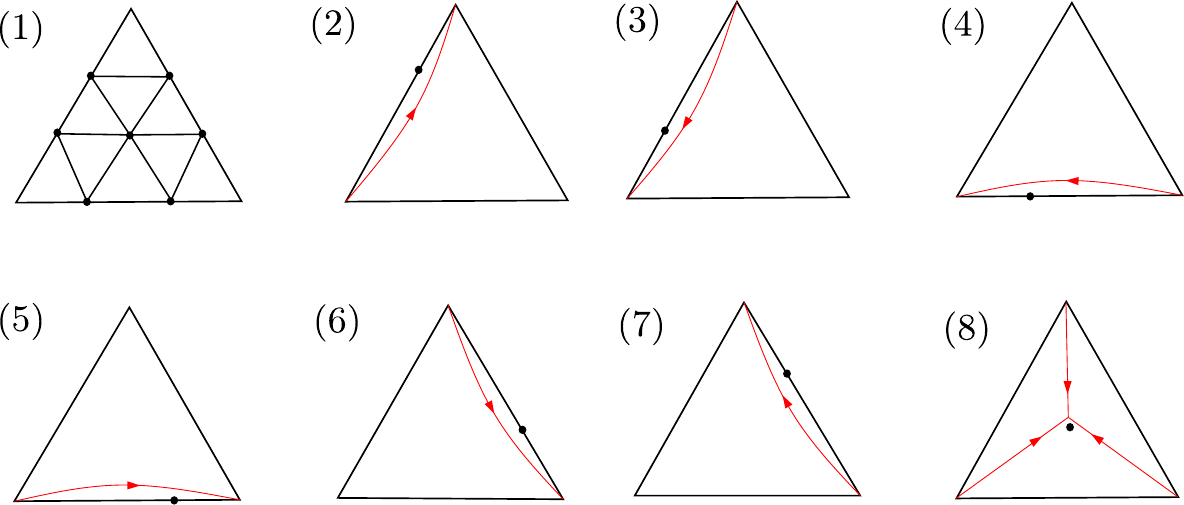}
\caption{The correspondence between $(\operatorname{SL}_3,\mathcal{X})$-webs and vertices of $Q_{\mathcal{T}}$.}
\label{figure:triA}
\end{figure}
By using the above collection of reduced $\mathcal{X}$ webs associated with an ideal triangulation, we arrive at the definition of the intersection map.
\begin{defn}
\label{definition:i}
For each ideal triangulation $\mathcal{T}$ of $\hat{S}$, we define the \emph{intersection map} 
\begin{equation}
\label{IT.map}
i_\mathcal{T}:\mathscr{W}_{\hat{S}}^{\mathcal{A}}\longrightarrow\left(\frac{1}{3}\mathbb{Z}\right)^{\Theta_{\mathcal{T}}}
\end{equation}
by 
\[i_\mathcal{T}([W]):=\left\{\mathbb{I}([W],[V_i])\right\}_{i\in\Theta_{\mathcal{T}}}.\]
\end{defn}

\bigskip

\section{Reduced Webs and Hives}
In this section, we show that the intersection map \eqref{IT.map} establishes a natural one-to-one correspondence between  reduced $\mathcal{A}$-webs and {\it Knutson-Tao hives}. For different ideal triangulations of $\hat{S}$, the corresponding Knutson-Tao hives are related by octahedron relations. 

\subsection{Hives}  
\label{sec:hives}
Following Knutson and Tao \cite{KT98}, a hive is an arrangement of numbers satisfying a family of rhombus conditions. 
Below, we include the definition of hives for ${\rm SL}_3$.

Let $\Delta$ be a disk with three marked points.  Let $a_1, \ldots, a_7$ be real numbers associated with the vertices of $Q_\Delta$ as follows
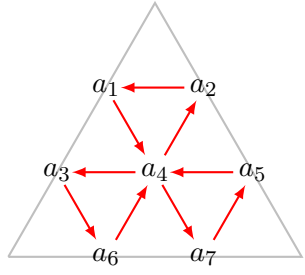
\begin{figure}[H]
\begin{tikzpicture}[scale=1.3]
\draw[gray!50, thick] (0,0)--(60:3)--(3,0)--(0,0);
\draw[red, thick, latex-] (60:2)++(0.15,0) -- ++ (0.65, 0);
\draw[red, thick, -latex] (60:2)++(-60:0.15) -- ++ (-60:0.65);
\draw[red, thick, -latex] (1,0)++(60:1.2) -- ++ (60:0.65);
\draw[red, thick, latex-] (60:1)++(0.15,0) -- ++ (0.65, 0);
\draw[red, thick, -latex] (60:1)++(-60:0.15) -- ++ (-60:0.65);
\draw[red, thick, -latex] (1,0)++(60:0.2) -- ++ (60:0.65);
\draw[red, thick, latex-] (60:1)++(1.15,0) -- ++ (0.65, 0);
\draw[red, thick, -latex] (60:2)++(-60:1.15) -- ++ (-60:0.65);
\draw[red, thick, -latex] (2,0)++(60:0.2) -- ++ (60:0.65);
\node (A) at (1,0) {$a_6$};
\node (B) at (2,0) {$a_7$};
\node at (60:1) {$a_3$};
\node at (60:2) {$a_1$};
\begin{scope}[shift={(3,0)}] 
\node at (120:1) {$a_5$};
\node at (120:2) {$a_2$};
\end{scope}
\begin{scope}[shift={(2,0)}] 
\node at (120:1) {$a_4$};
\end{scope}
\end{tikzpicture}
\caption{Coordinates associated with vertices of $Q_\Delta$.}
\label{quiver.delta}
\end{figure}
We say $(a_1,\ldots, a_7)$ is a hive if it satisfies the {\it rhombus condition}, that is, the following  
\[
a_1+a_2-a_4, \qquad a_3+a_4-a_1-a_6,\qquad a_4+a_5-a_2-a_7, 
\]
\[
a_5+a_7-a_4, \qquad a_2+a_4-a_1-a_5,\qquad a_4+a_6-a_3-a_7,
\]
\[
a_3+a_6-a_4, \qquad a_4+a_7-a_5-a_6, \qquad a_1+a_4-a_2-a_3
\]
are non-negative integers. 

Note that every arrow of $Q_\Delta$ is the short diagonal of a unit rhombus. The rhombus condition can be interpreted as saying that for any unit rhombus, the sum across the short diagonal minus the sum across the long diagonal is a non-negative integer.

\smallskip

Let $\mathcal{T}$ be an ideal triangulation of a decorated surface $\hat{S}$. A hive of $\mathcal{T}$ is an assignment of numbers to the vertices of $Q_{\mathcal{T}}$ such that they satisfy the rhombus condition within each ideal triangle of  $\mathcal{T}$. Denote by ${\bf Hive}(\mathcal{T})$ the set of hives of $\mathcal{T}$.

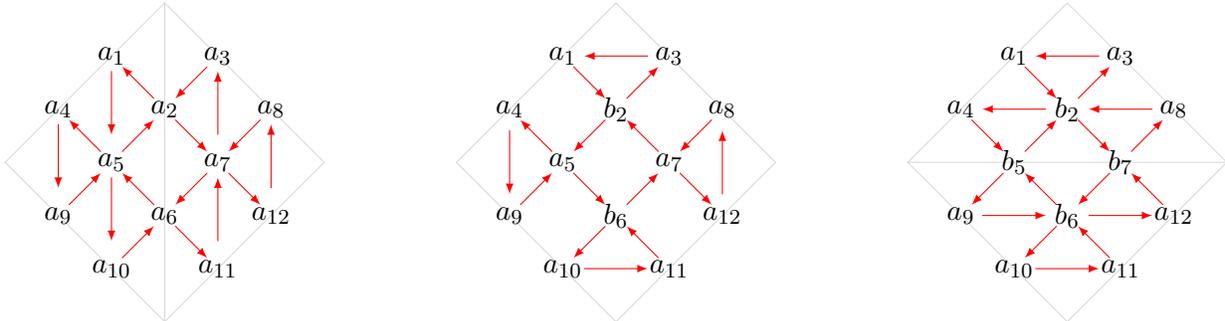
\begin{figure}[H]

\begin{tikzpicture}
\begin{scope}[rotate=-45]
\draw[gray!30] (0,0)--(-3,3)--(0,3)--(0,0)--(-3,0)--(-3,3);
\node at (-1,1) {$a_6$};
\node at (-2,2) {$a_2$};
\node at (0,1) {$a_{11}$};
\node at (0,2) {$a_{12}$};
\node at (-1,0) {$a_{10}$};
\node at (-2,0) {$a_9$};
\node at (-2,3) {$a_3$};
\node at (-1,3) {$a_8$};
\node at (-1,2) {$a_7$};
\node at (-2,1) {$a_5$};
\node at (-3,2) {$a_1$};
\node at (-3,1) {$a_4$};
 \foreach \position in {(-2,1), (-2,2), (-1,1)}
    {\draw[-latex,  red] \position++(-.2,0) -- ++(-.6,0);
    \draw[-latex,  red] \position++(0,-0.8) -- ++(0,0.6);
    \draw[-latex,  red] \position++(-1,0)++(-45:0.2) -- ++(0.6,-0.6);}
 \foreach \position in {(-1,2), (-2,2), (-1,1)}
    {\draw[latex-,  red] \position++(0,0.2) -- ++(0,0.6);
    \draw[latex-,  red] \position++(0.8,0) -- ++(-0.6,0);
    \draw[latex-,  red] \position++(0,1)++(-45:0.2) -- ++(0.6,-0.6);}   
\end{scope}

\begin{scope}[xshift=6cm]
\begin{scope}[rotate=-45]
\draw[gray!30] (-3,3)--(0,3)--(0,0)--(-3,0)--(-3,3);
\node at (-1,1) {$b_6$};
\node at (-2,2) {$b_2$};
\node at (0,1) {$a_{11}$};
\node at (0,2) {$a_{12}$};
\node at (-1,0) {$a_{10}$};
\node at (-2,0) {$a_9$};
\node at (-2,3) {$a_3$};
\node at (-1,3) {$a_8$};
\node at (-1,2) {$a_7$};
\node at (-2,1) {$a_5$};
\node at (-3,2) {$a_1$};
\node at (-3,1) {$a_4$};
 \foreach \position in {(-2,2), (-1,1)}
    {\draw[latex-,  red] \position++(-.2,0) -- ++(-.6,0);
    \draw[latex-,  red] \position++(0,-0.8) -- ++(0,0.6);
    \draw[latex-,  red] \position++(0, 0.8) -- ++(0, -0.6);
    \draw[latex-, red] \position++(0.2,0) -- ++(0.6,0);
   }

\draw[-latex,  red] (-1,0)++(0.2,0.2)--++(0.6,0.6);
\draw[latex-, red] (-3,2)++(0.2,.2)--++(0.6,.6);
\draw[-latex, red] (-2.2,1) -- ++(-.6,0);
    \draw[-latex, red] (-2,.2) -- ++(0,0.6);
    \draw[-latex, red] (-2.8,0.8) -- ++(0.6,-0.6);
    \draw[latex-,  red] (-0.8,2.8) -- ++(0.6,-0.6);
    \draw[-latex, red] (-1,2.8) -- ++(0,-0.6);
    \draw[-latex, red] (-.8,2) -- ++(0.6,0);   
\end{scope}  
\end{scope}

\begin{scope}[xshift=12cm]
\begin{scope}[rotate=-45]
\draw[gray!30] (-3,3)--(0,3)--(0,0)--(-3,0)--(-3,3);
\draw[gray!30] (-3,0)--(0,3);
    \draw[latex-,  red] (-3,2)++(0.2,0.2) -- ++(0.6,0.6);
    \draw[latex-,  red] (-3,1)++(0.2,0.2) -- ++(0.6,0.6);
    \draw[latex-,  red] (-2,2)++(0.2,0.2) -- ++(0.6,0.6);
    \draw[-latex,  red] (-2,0)++(0.2,0.2) -- ++(0.6,0.6);
    \draw[-latex,  red] (-1,0)++(0.2,0.2) -- ++(0.6,0.6);
    \draw[-latex,  red] (-1,1)++(0.2,0.2) -- ++(0.6,0.6);
    
    \draw[-latex,  red] (-3,2)++(0.2,0) -- ++(0.6,0);
    \draw[-latex,  red] (-3,1)++(0.2,0) -- ++(0.6,0);
    \draw[-latex,  red] (-2,2)++(0.2,0) -- ++(0.6,0);
    \draw[latex-,  red] (-2,0)++(0,0.2) -- ++(0,0.6);
    \draw[latex-,  red] (-1,0)++(0,0.2) -- ++(0,0.6);
    \draw[latex-,  red] (-1,1)++(0,0.2) -- ++(0,0.6);

    \draw[-latex,  red] (-2,2)++(0,0.2) -- ++(0,0.6);
    \draw[-latex,  red] (-2,1)++(0,0.2) -- ++(0,0.6);
    \draw[-latex,  red] (-1,2)++(0,0.2) -- ++(0,0.6);
    \draw[latex-,  red] (-2,0)++(0.2,1) -- ++(0.6,0);
    \draw[latex-,  red] (-1,0)++(0.2,1) -- ++(0.6,0);
    \draw[latex-,  red] (-1,1)++(0.2,1) -- ++(0.6,0);

  \node at (-1,1) {$b_6$};
\node at (-2,2) {$b_2$};
\node at (0,1) {$a_{11}$};
\node at (0,2) {$a_{12}$};
\node at (-1,0) {$a_{10}$};
\node at (-2,0) {$a_9$};
\node at (-2,3) {$a_3$};
\node at (-1,3) {$a_8$};
\node at (-1,2) {$b_7$};
\node at (-2,1) {$b_5$};
\node at (-3,2) {$a_1$};
\node at (-3,1) {$a_4$};   
\end{scope}  
\end{scope}

\end{tikzpicture}
\caption{Hives for different ideal triangulations}
\label{ideal.trop.2}
\end{figure}

Let $\{a_\bullet\}\in {\bf Hive}(\mathcal{T})$ be a hive whose restriction to a quadrilateral is illustrated by the left graph of Figure \ref{ideal.trop.2}. Flipping the diagonal of the quadrilateral, we obtain a new ideal triangulation $\mathcal{T}'$. 
We impose the {\it octahedron  relation}
\begin{equation}
\label{oact.rec}
\begin{array}{lll}
a_2+b_2=\max\{a_1+a_7, \, a_5+a_3\},&\hskip 25mm & a_6+b_6=\max\{a_5+a_{11}, \, a_7+a_{10}\};\\
b_5+a_5=\max\{a_4+b_6,\, a_9+b_2\}, && b_7+a_7=\max\{b_2+a_{12}, \,a_8+b_6\}. 
\end{array}
\end{equation}
It gives rise to the numbers $b_2, b_5, b_6, b_7$ to the four new vertices of $Q_{\mathcal{T}'}$ as depicted on the right graph of Figure \ref{ideal.trop.2}. We keep the rest $a_{\bullet}$ invariant. The newly obtained assignment satisfies the rhombus condition. In this way, we obtain a hive for $\mathcal{T}'$. 

The above procedure can be reversed. Any two ideal triangulations $\mathcal{T}$ and $\mathcal{T}'$ can be related by a sequence of flips of diagonals. By applying the relations \eqref{oact.rec} recursively, we obtain a bijection 
\begin{equation}
\label{sacndo}
\varphi_{\mathcal{T}', \mathcal{T}}: \, {\bf Hive}(\mathcal{T})\stackrel{\sim}{\longrightarrow}{\bf Hive}(\mathcal{T}').
\end{equation}

Note that there might be different sequences of flips relating $\mathcal{T}$ and $\mathcal{T}'$. The paper \cite{GS15} shows that the hives can be viewed as a tropicalization of the Fock-Goncharov moduli space $\mathcal{A}_{{\rm PGL}_3, \hat{S}}$ (See Section \ref{section:wa}). As a result, the bijection \eqref{sacndo} is canonical and does not depend on the sequence of flips used. 

In the rest of this section, we prove the following main theorem of this paper.

\begin{theorem} 
\label{Int.hive}
The intersection map $i_{\mathcal{T}}$ in \eqref{IT.map} gives rise to a bijection 
\[
i_{\mathcal{T}}:~ \mathscr{W}_{\hat{S}}^{\mathcal{A}} \stackrel{\sim}{\longrightarrow} {\bf Hive}(\mathcal{T}).
\]
For any two ideal triangulations $\mathcal{T}$ and $\mathcal{T}'$ of $\hat{S}$, the bijections $i_{\mathcal{T}}$ and $i_{\mathcal{T}'}$ are compatible with the transition map $\varphi_{\mathcal{T}',\mathcal{T}}$:
\[
i_{\mathcal{T}'}=\varphi_{\mathcal{T}',\mathcal{T}}\circ i_{\mathcal{T}}.
\]
\end{theorem}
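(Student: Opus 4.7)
The plan is to reduce the theorem to the case of a single ideal triangle via the good position property, verify the bijection and the hive condition locally, and then handle flip equivariance by a direct local analysis on the flipped quadrilateral.

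For a disk $\Delta$ with three marked points, Example \ref{tri-redu-web} parameterizes $\mathscr{W}^{\mathcal{A}}_{\Delta}$ by tuples $(x,y,z,t,u,v,w)\in\mathbb{Z}\times\mathbb{N}^6$ encoding the signed size of the central honeycomb and the six corner arc multiplicities. First I would compute, using the local intersection values shown in Figure \ref{figure:Inttri} and a case split on the sign of $x$, the seven intersection numbers $\mathbb{I}([W],[V_i])$ against the six oriented edge arcs and the central tripod. A substitution into the nine rhombus expressions of Section \ref{sec:hives} will reveal that each such expression equals one of $|x|,\,y,\,z,\,t,\,u,\,v,\,w$, so the image always lies in $\mathbf{Hive}(\Delta)$; conversely, inverting the resulting linear system recovers $(x,y,z,t,u,v,w)$ uniquely from the hive, giving the bijection on a single triangle.

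To globalize, place $W\in\mathscr{W}^{\mathcal{A}}_{\hat{S}}$ in good position with respect to the split triangulation $\mathcal{T}_2$ using Proposition \ref{proposition:gp}: the restriction to each ideal triangle is a reduced web of the form above, and the restriction to each bigon is a minimal ladder as in Example \ref{minimal.ladder}. The two $\mathcal{X}$-webs attached to the two vertices on an internal ideal edge have opposite orientations, so the pair of intersection numbers at these vertices jointly records the number of ladder strands crossing the edge in each direction; in particular the ladder data is determined by the hive values at the adjacent edge vertices. Since the rhombus condition is local to each ideal triangle, the image of $i_{\mathcal{T}}$ lies in $\mathbf{Hive}(\mathcal{T})$. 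Injectivity follows from the triangle case together with the uniqueness statement at the end of Example \ref{minimal.ladder}, which says that the minimal ladder on a bigon is determined by its boundary crossings. Surjectivity follows by the reverse reconstruction: a hive determines a local reduced web on each triangle, the boundary data on the two sides of each internal edge is then compatible by the rhombus conditions, and the minimal ladder interpolating between the two sides is uniquely constructed.

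The final and hardest step is flip equivariance. Fix a quadrilateral $\mathcal{Q}$ with triangulations $\mathcal{T}$ and $\mathcal{T}'$ related by a flip, and let $W$ be a reduced web on $\mathcal{Q}$ placed in good position with respect to both $\mathcal{T}_2$ and $\mathcal{T}'_2$. I would compute each side of \eqref{oact.rec} in terms of the honeycomb sizes and corner multiplicities of the two local triangular pieces of $W$ and of the ladder strand counts across the relevant edges, using the triangle formulas obtained above. The tropical maxima arise because the minimum-intersection representative after the flip is obtained by regrouping staircases, and the optimum selects the larger of the two competing strand counts on the two sides of the old diagonal. The main obstacle will be a careful bookkeeping, with a case analysis on the signs of the two honeycomb contributions before and after the flip, to verify each of the four equalities of \eqref{oact.rec} by matching the tropical max on the right with the actual regrouping that minimizes intersections on the left. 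Once this local flip compatibility is established, the identity $i_{\mathcal{T}'}=\varphi_{\mathcal{T}',\mathcal{T}}\circ i_{\mathcal{T}}$ for arbitrary triangulations follows because both the intersection map and the octahedron recurrence are supported near the flipped diagonal, and the canonicity of $\varphi_{\mathcal{T}',\mathcal{T}}$ noted after \eqref{sacndo} removes any dependence on the chosen sequence of flips.
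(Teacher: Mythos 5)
Your outline matches the paper's at the top level (reduce to a triangle via the good position property, establish the local bijection, then verify the octahedron relations on a flipped quadrilateral), but there is a genuine gap at the very first step that propagates through the whole argument: you never address how to evaluate $\mathbb{I}([W],[V_i])$, which is defined as a \emph{minimum} of $i(W,V)$ over all transverse representatives $V$ in the homotopy class $[V_i]$ (and, for the tripod, over all positions of its trivalent vertex). Reading off the local intersection values of Figure \ref{figure:Inttri} for one convenient representative only gives an \emph{upper bound} for each coordinate of $i_\mathcal{T}(W)$; the substantive content of the local computation is proving that this representative realizes the minimum. The paper does this by introducing an asymmetric intersection metric on the dual surfacoid $W^*$, realizing nets and necklaces as subgraphs of the lattice graph $\Gamma$, and proving geodesic lemmas (Lemmas \ref{adnijn}--\ref{Ylemma.trop}, \ref{lem: homotoping a path}, \ref{lem: homotoping a tripod}) that identify the minimizers: straight boundary paths for the edge arcs and a Steiner-type point in the mesh for the tripod. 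The same issue recurs globally: to localize $\mathbb{I}([W],[V_i])$ to one ideal triangle you must rule out representatives of $[V_i]$ that wander through other triangles, which the paper handles by a crossing-sequence argument in the universal cover (Lemmas \ref{lem: representative of ideal arc} and \ref{lem: minimal intersection for tripod}). Without some version of these minimality statements, the rhombus verification and the inversion of the linear system rest on an unproved identification of $i_\mathcal{T}(W)$ with the naive count.

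Two smaller points. For surjectivity, gluing the local triangle webs by minimal ladders produces a web in good position but not obviously a \emph{reduced} one; the paper must invoke the reordering and ``I''-bar resolution procedure of Douglas--Sun and their uniqueness lemma to obtain a well-defined reduced web, and your injectivity claim likewise needs uniqueness of the reduced web up to homotopy on $\hat{S}\times[0,1]$, not just uniqueness of the ladders. For flip equivariance, your plan of recomputing the good-position data of $W$ with respect to $\mathcal{T}'$ and matching it against \eqref{oact.rec} by case analysis is a genuinely different and in principle viable route, but it requires re-deriving how the honeycomb sizes and corner multiplicities transform under a flip, again subject to the same minimality caveat for the new $\mathcal{X}$-webs. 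The paper instead proves a single exchange identity $\mathbb{I}([W],[A])+\mathbb{I}([W],[B])=\max\{\mathbb{I}([W],[C]),\mathbb{I}([W],[D])\}$ by swapping legs of paths at a distinguished vertex of $W^*$, and obtains all four octahedron relations from four applications of it, never re-expressing $W$ in good position with respect to $\mathcal{T}'$.
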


\subsection{Intersection metric}
Let $G=(V, E)$ be a connected oriented graph with the set of vertices $V$ and the set of oriented edges $E$. In preparation for proving Theorem \ref{Int.hive}, we introduce the following intersection metric on $G$. 

\begin{defn} 
 A path $\gamma$ in an oriented graph $G$ is an alternating sequence of vertices and edges $(a_0,e_1,a_1,e_2,a_2,...,e_n,a_n)$ such that $a_{i-1}$ and $a_i$ are endpoints of $e_i$ for each $i$. We define the length of $\gamma$ as
\[
|\gamma|:=\sum_{i=1}^ld_{e_i}(a_{i-1},a_i),
\]
where 
\begin{equation*}
d_{e_i}(a_{i-1},a_i):=\left\{
\begin{aligned}
&\frac{1}{3} \;\;\;  \text{if $e_i$ goes from $a_{i-1}$ to $a_i$};  \\
&\frac{2}{3} \;\;\; \text{if $e_i$ goes from $a_{i}$ to $a_{i-1}$}.
\end{aligned}
\right.
\end{equation*}
Let $a, b$ be a pair of vertices of $G$. Let $P(a,b)$ denote the set of paths from $a$ to $b$. We define the {\em intersection  metric} on $G$ by
\begin{equation}
\label{distan.s}
d(a,b):=\min_{\gamma\in P(a,b)}|\gamma|.
\end{equation}
A path from $a$ to $b$ attaining the minimum value is called a \emph{geodesic}. 
\end{defn}

\begin{remark} \label{lem: triangle inequality}
 It is easy to see that the intersection metric satisfies the following properties:
\begin{itemize}
    \item for any $a,b$, we have $d(a,b)\geq 0$ and the equality attains if and only if $a=b$;
    \item for any $a,b,c$, we have $d(a,b)+d(b,c)\geq d(a,c)$.
\end{itemize} 
However, it is {\em asymmetric}, i.e., $d(a,b)\neq d(b,a)$ in general. 
\end{remark}

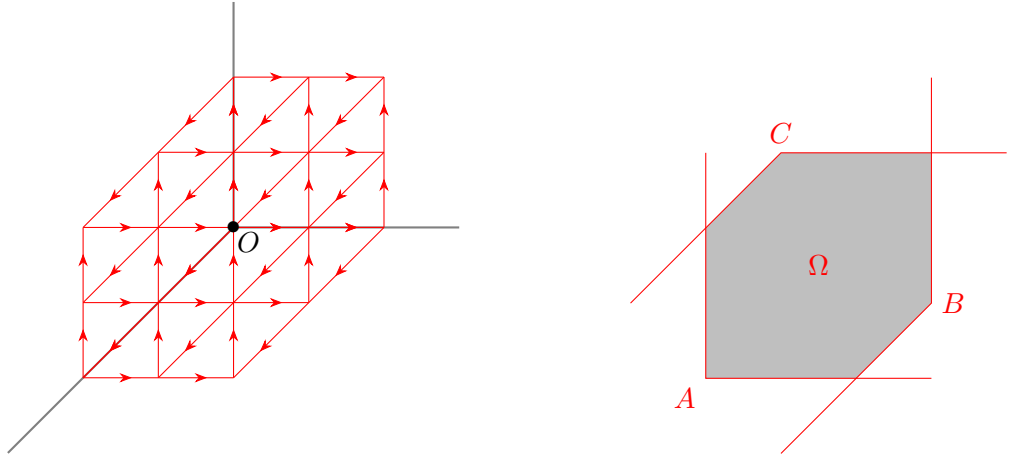
\begin{figure}[H]
\begin{tikzpicture}
\draw[thick,gray] (0,0)--(3,0);
\draw[thick,gray] (0,0)--(0,3);
\draw[thick,gray] (0,0)--(-3,-3);
\foreach \x in {0, 1}
{\foreach \y in {0,...,2}
{ \draw [red, decoration={markings, mark=at position 0.65 with {\arrow{Stealth}}}, postaction=decorate] (\x, \y) -- (\x+1, \y);
\draw [red, decoration={markings, mark=at position 0.65 with {\arrow{Stealth}}}, postaction=decorate]  (-\y, \x-\y) -- (-\y, \x-\y+1) ;
\draw [red, decoration={markings, mark=at position 0.65 with {\arrow{Stealth}}}, postaction=decorate]  (\y-\x, -\x) -- (\y-\x-1, -\x-1) ;
 \draw [red, decoration={markings, mark=at position 0.65 with {\arrow{Stealth}}}, postaction=decorate] (\y, \x)-- (\y, \x+1);
  \draw [red, decoration={markings, mark=at position 0.65 with {\arrow{Stealth}}}, postaction=decorate] (-\x, \y-\x)-- (-\x-1, \y-\x-1);
   \draw [red, decoration={markings, mark=at position 0.65 with {\arrow{Stealth}}}, postaction=decorate] (\x-\y, -\y)-- (\x-\y+1, -\y);}
\foreach \z in {0, 1}
{
\draw [red, decoration={markings, mark=at position 0.65 with {\arrow{Stealth}}}, postaction=decorate] (-\z-1, -\z+\x) -- (-\z, -\z+\x);
\draw [red, decoration={markings, mark=at position 0.65 with {\arrow{Stealth}}}, postaction=decorate] (\x+1, \z+1) -- (\x, \z);
\draw [red, decoration={markings, mark=at position 0.65 with {\arrow{Stealth}}}, postaction=decorate] (\z-\x, -\x-1) -- (\z-\x, -\x);
}
}
\node at (0, 0) {$\bullet$};
\node at (0.2,-0.2) {$O$};
\end{tikzpicture} \hspace{2cm}
\begin{tikzpicture}
    \path [fill=lightgray] (0,0) -- (2,0) -- (3,1) -- (3,3) -- (1,3) -- (0,2) -- cycle;
    \draw [red] (0,3) -- (0,0) node [below left] {$A$} -- (3,0);
    \draw [red] (3,4) -- (3,1) node [right] {$B$} -- (1,-1);
    \draw [red] (-1,1) -- (1,3) node [above] {$C$} -- (4,3);
    \node [red] at (1.5,1.5) [] {$\Omega$};
\end{tikzpicture}
\caption{Left: the graph $\Gamma$ associated with the lattice $\mathbb{Z}^2$. Right: the region $\Omega$ defined by a triple of points $(A,B,C)$.}
\label{G.Gamma}
\end{figure}

Below we consider a particular oriented graph $\Gamma$ with the integer lattice $\mathbb{Z}^2$ as its vertex set and with an edge from $(x,y)$ to each of $(x+1,y)$, $(x,y+1)$, and $(x-1,y-1)$ for every vertex $(x,y)$. See the left picture in Figure \ref{G.Gamma}.

\begin{lemma} 
\label{adnijn}
Let $O$ be the origin and let $A=(x,y)\in \mathbb{Z}^2$. Then 
\[
d(O,A)= \frac{\max\{x+y,\, y-2x, \, x-2y\}}{3}=\left\{
\begin{array}{ll}
\frac{x+y}{3} & \mbox{if $x\geq 0$ and $y\geq 0$;}\\
\frac{y-2x}{3} & \mbox{if $x\leq 0$ and $y\geq x$;} \\
\frac{x-2y}{3} &\mbox{if $y\leq 0$ and $x \geq y$.}\\
\end{array}
\right.
\]
\end{lemma}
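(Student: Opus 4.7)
The plan is to sandwich $d(O,A)$ between matching upper and lower bounds: an explicit short path will give the upper bound, and three linear potential functions will give the lower bound.

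For the upper bound, in each of the three regions I construct a path from $O$ to $A$ using only forward edges. When $x,y\geq 0$, take $x$ forward east edges followed by $y$ forward north edges, for a total length of $\tfrac{x+y}{3}$. When $x\leq 0$ and $y\geq x$, first traverse $-x$ forward southwest edges from $O$ to $(x,x)$, then $y-x\geq 0$ forward north edges to $(x,y)$, for a total length of $\tfrac{-x+(y-x)}{3}=\tfrac{y-2x}{3}$. The third case $y\leq 0$, $x\geq y$ is symmetric, using $-y$ forward southwest edges and $x-y$ forward east edges, giving a length of $\tfrac{x-2y}{3}$.

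For the lower bound, I consider the three linear functions
\[
h_1(x,y)=\frac{x+y}{3},\qquad h_2(x,y)=\frac{y-2x}{3},\qquad h_3(x,y)=\frac{x-2y}{3}.
\]
A direct computation on the three types of directed edges (east, north, southwest) shows that for each $i$ and every directed edge of $\Gamma$ from $u$ to $v$, the difference $h_i(v)-h_i(u)$ lies in the interval $[-\tfrac{2}{3},\tfrac{1}{3}]$. Since $d_e(u,v)=\tfrac{1}{3}$ for a forward traversal and $d_e(v,u)=\tfrac{2}{3}$ for a backward traversal, this is precisely the statement that $h_i(b)-h_i(a)\leq d_e(a,b)$ for every step $(a,e,b)$ in any path. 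Summing along an arbitrary $\gamma\in P(O,A)$ therefore gives $h_i(A)\leq |\gamma|$, since $h_i(O)=0$; taking the infimum over $\gamma$ and then the maximum over $i$ produces $d(O,A)\geq\tfrac{1}{3}\max\{x+y,\,y-2x,\,x-2y\}$, which matches the upper bound.

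The piecewise description follows from noting that $x+y\geq y-2x$ iff $x\geq 0$, $x+y\geq x-2y$ iff $y\geq 0$, and $y-2x\geq x-2y$ iff $y\geq x$, so the three listed regions are precisely where each term attains the maximum and together cover $\mathbb{Z}^2$. The main obstacle is the edge-by-edge verification of the Lipschitz condition for each $h_i$: the asymmetry of the metric means that signs and orientations must be tracked carefully across all three edge types, but these are just nine small arithmetic checks, after which the proof assembles immediately.
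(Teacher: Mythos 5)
Your proof is correct and follows essentially the same strategy as the paper's: an explicit two-segment path gives the upper bound, and a linear functional certifies the matching lower bound (your edge-by-edge Lipschitz check of $h_1,h_2,h_3$ is just the local form of the paper's algebraic inequality $\frac{(k+q+s)+2(p+r+t)}{3}\geq\frac{(k+q+2t)-(p+r+2s)}{3}$ obtained by counting edge types along an arbitrary path). The only difference is organizational — your three potentials handle all regions uniformly, where the paper proves one case and declares the others analogous.
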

\begin{proof} 
We prove the case when $x\geq 0$ and $y\geq 0$. The proof for the rest two cases goes along the same line. Consider the vectors
\[
\vec{u}=(1,0), \qquad \vec{v}=(0,1), \qquad \vec{w}=(-1,-1).
\]
Let $\gamma$ be a path from $O$ to $A$. Let $k, p, q, r, s, t$ be the numbers of the oriented edges on $\gamma$ that are equal to $\vec{u}$, $-\vec{u}$, $\vec{v}$, $-\vec{v}$, $\vec{w}$, and $-\vec{w}$, respectively. We have
\[
(x,y)=(k-p+t-s, q-r+t-s).
\]
Since $k,p,q,r,s,t$ are  non-negative, we get
\[
|\gamma|=\frac{(k+q+s)+2(p+r+t)}{3}\geq \frac{(k+q+2t)-(p+r+2s)}{3}=\frac{x+y}{3}.
\]
Meanwhile, if $\gamma$ is a path that first goes horizontally from $O$ to $(x,0)$ and then vertically to $A=(x,y)$, we have $|\gamma|=\frac{x+y}{3}$, which concludes the proof of the Lemma.
\end{proof}

\begin{lemma} 
\label{distance.asc}
Let $A=(0,p)$ and $B=(q,0)$ with $pq\geq 0$. The path $\gamma$ going vertically from $A$ to $O$ and then horizontally from $O$ to $B$ is a geodesic.    
\end{lemma}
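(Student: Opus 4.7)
The plan is to reduce the claim to Lemma~\ref{adnijn} via translation invariance of the intersection metric on $\Gamma$ and then verify directly that $|\gamma|$ agrees with the formula produced there.

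First, I would observe that the edge set issuing from every vertex of $\Gamma$ is the same triple of displacements $\{\vec{u},\vec{v},\vec{w}\}=\{(1,0),(0,1),(-1,-1)\}$, so the metric $d$ is translation invariant: for every triple of lattice points $A,B,C$ one has $d(A+C,B+C)=d(A,B)$. In particular
\[
d(A,B)=d\bigl(O,B-A\bigr)=d\bigl(O,(q,-p)\bigr).
\]
This reduction is what lets us invoke the previous lemma, which computes distances only from the origin.

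Next I would compute $|\gamma|$ directly from the definition of $d_{e_i}$, splitting on the sign of $p,q$ (which have the same sign by the hypothesis $pq\geq 0$). If $p,q\geq 0$, then the vertical leg from $A=(0,p)$ down to $O$ traverses $p$ consecutive $\vec{v}$-edges against their orientation, each contributing $2/3$; the horizontal leg from $O$ to $B=(q,0)$ traverses $q$ consecutive $\vec{u}$-edges along their orientation, each contributing $1/3$. Hence $|\gamma|=(2p+q)/3$. If instead $p,q\leq 0$, the vertical leg goes up (along $\vec{v}$) and the horizontal leg goes left (against $\vec{u}$), giving $|\gamma|=(-p-2q)/3$.

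Finally, I would match these to Lemma~\ref{adnijn}. With $(x,y)=(q,-p)$, the case $p,q\geq 0$ gives $y\leq 0$ and $x\geq y$, so $d(O,(q,-p))=(x-2y)/3=(q+2p)/3=|\gamma|$; the case $p,q\leq 0$ gives $x\leq 0$ and $y\geq x$, so $d(O,(q,-p))=(y-2x)/3=(-p-2q)/3=|\gamma|$. In either case $|\gamma|=d(A,B)$, so $\gamma$ is a geodesic. I do not anticipate a genuine obstacle here; the only care needed is bookkeeping of signs and confirming that each edge of $\gamma$ is indeed traversed in the direction claimed, which is immediate from the definition of $\Gamma$.
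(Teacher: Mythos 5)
Your proof is correct and follows essentially the same route as the paper's: translate so that $A$ moves to the origin, invoke the translation invariance of the metric on $\Gamma$, and apply Lemma~\ref{adnijn} to $d(O,(q,-p))$. The only difference is that you treat the two sign cases explicitly and compute $|\gamma|$ edge by edge, whereas the paper reduces to $p,q\geq 0$ and simply asserts the value of $|\gamma|$; your extra care is harmless (and arguably welcome, since the asymmetry of the metric makes the ``without loss of generality'' step less than immediate).
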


\begin{proof} Without loss of generality we may assume $p,q\geq 0$. By translating the plane by $(0,-p)$, we move $A$ to $O$ and $B$ to $B'=(q,-p)$. Note that due to the translational symmetry of $\Gamma$, $d(A,B)=d(O,B')$. By Lemma \ref{adnijn}, $d(O,B')=\frac{q+2p}{3}$. But this is exactly $|\gamma|$.
\end{proof}

\begin{lemma}
\label{Ylemma.trop}
    Let $A=(a_1,a_2), B=(b_1,b_2), C=(c_1,c_2)$ be vertices on $\Gamma$ such that the region
    \[
    \Omega:=\left\{(x,y)\in \mathbb{Z}^2 ~\middle | ~a_1\leq x\leq b_1, ~~ a_2\leq y\leq c_2, ~~b_2-b_1\leq y-x\leq c_2-c_1\right\}
    \]
    is nonempty (the right picture in Figure \ref{G.Gamma}).  Let $X$ be an arbitrary vertex of $\Gamma$. We have 
    \[
    d(A,X)+d(B,X)+d(C,X)\geq \frac{-a_1-a_2+2b_1-b_2-c_1+2c_2}{3}.
    \]
    It attains the minimum when $X\in \Omega$. 
\end{lemma}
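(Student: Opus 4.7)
The plan is to exploit the explicit formula for the intersection metric given by Lemma 3.4 and its translational extension: for arbitrary vertices $P=(p_1,p_2)$ and $Q=(q_1,q_2)$ of $\Gamma$, we have
\[
d(P,Q) \;=\; \frac{\max\{(q_1-p_1)+(q_2-p_2),\; (q_2-p_2)-2(q_1-p_1),\; (q_1-p_1)-2(q_2-p_2)\}}{3}.
\]
Writing $X=(x_1,x_2)$, each of $d(A,X)$, $d(B,X)$, $d(C,X)$ is the maximum of three linear expressions in $x_1,x_2$. The strategy is to pick one linear expression out of each maximum whose sum miraculously collapses to a constant, yielding the desired lower bound, and then check that the choices are simultaneously optimal precisely on $\Omega$.

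Concretely, I would choose the first linear expression $(x_1+x_2-a_1-a_2)/3$ for $d(A,X)$, the second expression $(x_2-2x_1+2b_1-b_2)/3$ for $d(B,X)$, and the third expression $(x_1-2x_2-c_1+2c_2)/3$ for $d(C,X)$. A direct expansion shows that the $x_1$ and $x_2$ terms cancel, so the sum equals the constant $(-a_1-a_2+2b_1-b_2-c_1+2c_2)/3$. Since each chosen linear expression is bounded above by the corresponding maximum, this immediately produces the lower bound
\[
d(A,X)+d(B,X)+d(C,X)\;\geq\;\frac{-a_1-a_2+2b_1-b_2-c_1+2c_2}{3}.
\]

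To establish that equality is attained for every $X\in\Omega$, I would verify, for each of the three distances, the two inequalities that make the chosen linear expression dominate the other two. A short computation shows: the chosen term for $d(A,X)$ dominates iff $x_1\geq a_1$ and $x_2\geq a_2$; the chosen term for $d(B,X)$ dominates iff $x_1\leq b_1$ and $x_2-x_1\geq b_2-b_1$; and the chosen term for $d(C,X)$ dominates iff $x_2\leq c_2$ and $x_2-x_1\leq c_2-c_1$. Taken together, these six inequalities cut out exactly the region $\Omega$, so for any $X\in\Omega$ all three maxima are achieved by the chosen expressions and equality holds.

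I do not expect a substantial obstacle: once the formula from Lemma 3.4 is in hand, the argument is a carefully chosen linear combination followed by a direct check of the defining inequalities of $\Omega$. The only mild subtlety is recognizing the right choice of linear summands from the three maxima — they are naturally suggested by the geometric roles of $A$ (the ``lower'' vertex), $B$ (the ``lower-right'' vertex), and $C$ (the ``upper-left'' vertex) relative to the shape of $\Omega$.
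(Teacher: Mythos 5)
Your proposal is correct and follows essentially the same route as the paper: both apply the translated form of Lemma \ref{adnijn}, select the first, second, and third linear branches of the three maxima so that the $x$- and $y$-terms cancel, and observe that the six domination inequalities are exactly the defining inequalities of $\Omega$. Your write-up is in fact slightly more explicit than the paper's about the equality case, but the argument is the same.
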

\begin{proof}
Let $X=(x,y)$. By Lemma \ref{adnijn}, we have
\begin{align*}
&d(A,X)+d(B,X)+d(C,X)\\
&\geq \frac{(x-a_1)+(y-a_2)}{3}+\frac{(y-b_2)-2(x-b_1)}{3}+\frac{(x-c_1)-2(y-c_2)}{3}\\
&=\frac{-a_1-a_2-b_2+2b_1-c_1+2c_2}{3}.
\end{align*}
It attains equality when and only when $X\in \Omega$. 
\end{proof}

\subsection{Dual surfacoids}

The dual diskoids, introduced in \cite{FKK13, FONT12}, are dual graphs of webs on disks. 
 We now extend this notion from disks to decorated surfaces. 

\begin{defn} Given an  $\mathcal{A}$-web $W$ on $\hat{S}$, its {\em dual surfacoid $W^*$} is obtained as follows.
\begin{enumerate}
\item Place a vertex in each connected component of $\hat{S}\setminus W$; 
\item For every oriented edge $e$ of $W$ separating two connected components of  $\hat{S}\setminus W$, we  draw an oriented edge $e^*$ connecting the corresponding vertices of $W^*$ such that $i(e,e^*)=\frac{1}{3}$.
\end{enumerate}  
As in Figure \ref{figure:dualgr}, each trivalent sink in $W$ corresponds to a clockwise oriented triangle in $W^*$, and each trivalent source in $W$ corresponds to a counterclockwise oriented triangle in $W^*$. 
\end{defn}

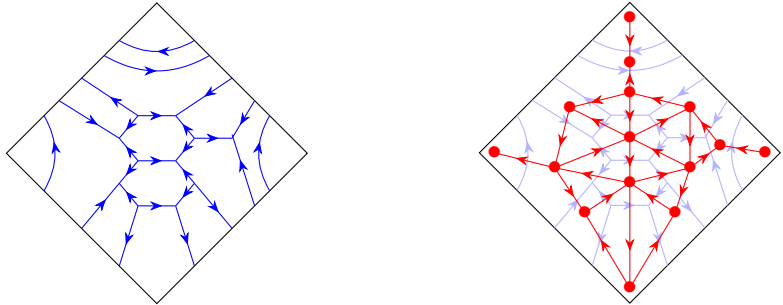
\begin{figure}[H]
\begin{tikzpicture}
\draw (0,-2) -- (2,0) -- (0,2) -- (-2,0) -- cycle;
\draw [blue, decoration={markings, mark=at position 0.5 with {\arrow{Stealth}}}, postaction=decorate] (0.5,1.5) to [bend left] (-0.5,1.5);
\draw [blue, decoration={markings, mark=at position 0.5 with {\arrow{Stealth}}}, postaction=decorate] (-0.7,1.3) to [bend right] (0.7,1.3);
\draw [blue, decoration={markings, mark=at position 0.5 with {\arrow{Stealth}}}, postaction=decorate] (1,1) -- (0.25,0.5);
\draw [blue, decoration={markings, mark=at position 0.65 with {\arrow{Stealth}}}, postaction=decorate] (-0.25,0.5) -- (0.25,0.5);
\draw [blue, decoration={markings, mark=at position 0.5 with {\arrow{Stealth}}}, postaction=decorate] (-0.25,0.5) -- (-1,1);
\draw [blue, decoration={markings, mark=at position 0.65 with {\arrow{Stealth}}}, postaction=decorate] (-0.25,0.5) -- (-0.5,0.2);
\draw [blue, decoration={markings, mark=at position 0.65 with {\arrow{Stealth}}}, postaction=decorate] (0.5,0.2) -- (0.25,0.5);
\draw [blue, decoration={markings, mark=at position 0.65 with {\arrow{Stealth}}}, postaction=decorate] (-1.3,0.7) -- (-0.5,0.2);
\draw [blue, decoration={markings, mark=at position 0.65 with {\arrow{Stealth}}}, postaction=decorate] (0.5,0.2) -- (1,0.2);
\draw [blue, decoration={markings, mark=at position 0.65 with {\arrow{Stealth}}}, postaction=decorate] (1.3,0.7) -- (1,0.2);
\draw [blue, decoration={markings, mark=at position 0.65 with {\arrow{Stealth}}}, postaction=decorate] (1.5,-0.5 ) to [bend left] (1.5,0.5);
\draw [blue, decoration={markings, mark=at position 0.65 with {\arrow{Stealth}}}, postaction=decorate] (1.3,-0.7) -- (1,0.25);
\draw [blue, decoration={markings, mark=at position 0.65 with {\arrow{Stealth}}}, postaction=decorate] (0.5,0.2) -- (0.25,-0.1);
\draw [blue, decoration={markings, mark=at position 0.65 with {\arrow{Stealth}}}, postaction=decorate] (-0.25,-0.1) -- (0.25,-0.1);
\draw [blue, decoration={markings, mark=at position 0.65 with {\arrow{Stealth}}}, postaction=decorate] (-0.25,-0.1) -- (-0.5,0.2);
\draw [blue, decoration={markings, mark=at position 0.65 with {\arrow{Stealth}}}, postaction=decorate] (-0.25,-0.1) -- (-0.5,-0.4);
\draw [blue, decoration={markings, mark=at position 0.65 with {\arrow{Stealth}}}, postaction=decorate] (-1,-1) -- (-0.5,-0.4);
\draw [blue, decoration={markings, mark=at position 0.65 with {\arrow{Stealth}}}, postaction=decorate] (-0.25,-0.7) -- (-0.5,-0.4);
\draw [blue, decoration={markings, mark=at position 0.65 with {\arrow{Stealth}}}, postaction=decorate] (-0.25,-0.7) -- (0.25,-0.7);
\draw [blue, decoration={markings, mark=at position 0.65 with {\arrow{Stealth}}}, postaction=decorate] (0.5,-0.4) -- (0.25,-0.7);
\draw [blue, decoration={markings, mark=at position 0.65 with {\arrow{Stealth}}}, postaction=decorate] (0.5,-0.4) -- (0.25,-0.1);
\draw [blue, decoration={markings, mark=at position 0.65 with {\arrow{Stealth}}}, postaction=decorate] (0.5,-0.4) -- (1,-1);
\draw [blue, decoration={markings, mark=at position 0.65 with {\arrow{Stealth}}}, postaction=decorate] (-0.25,-0.7) -- (-0.5,-1.5);
\draw [blue, decoration={markings, mark=at position 0.65 with {\arrow{Stealth}}}, postaction=decorate] (0.25,-0.7) -- (0.5,-1.5);
\draw [blue, decoration={markings, mark=at position 0.65 with {\arrow{Stealth}}}, postaction=decorate] (-1.5,-0.5) to [bend right] (-1.5,0.5);
\end{tikzpicture} \hspace{2cm}
\begin{tikzpicture}
\draw (0,-2) -- (2,0) -- (0,2) -- (-2,0) -- cycle;
\draw [lightblue, decoration={markings, mark=at position 0.5 with {\arrow{Stealth}}}, postaction=decorate] (0.5,1.5) to [bend left] (-0.5,1.5);
\draw [lightblue, decoration={markings, mark=at position 0.5 with {\arrow{Stealth}}}, postaction=decorate] (-0.7,1.3) to [bend right] (0.7,1.3);
\draw [lightblue, decoration={markings, mark=at position 0.5 with {\arrow{Stealth}}}, postaction=decorate] (1,1) -- (0.25,0.5);
\draw [lightblue, decoration={markings, mark=at position 0.65 with {\arrow{Stealth}}}, postaction=decorate] (-0.25,0.5) -- (0.25,0.5);
\draw [lightblue, decoration={markings, mark=at position 0.5 with {\arrow{Stealth}}}, postaction=decorate] (-0.25,0.5) -- (-1,1);
\draw [lightblue, decoration={markings, mark=at position 0.65 with {\arrow{Stealth}}}, postaction=decorate] (-0.25,0.5) -- (-0.5,0.2);
\draw [lightblue, decoration={markings, mark=at position 0.65 with {\arrow{Stealth}}}, postaction=decorate] (0.5,0.2) -- (0.25,0.5);
\draw [lightblue, decoration={markings, mark=at position 0.65 with {\arrow{Stealth}}}, postaction=decorate] (-1.3,0.7) -- (-0.5,0.2);
\draw [lightblue, decoration={markings, mark=at position 0.65 with {\arrow{Stealth}}}, postaction=decorate] (0.5,0.2) -- (1,0.2);
\draw [lightblue, decoration={markings, mark=at position 0.65 with {\arrow{Stealth}}}, postaction=decorate] (1.3,0.7) -- (1,0.2);
\draw [lightblue, decoration={markings, mark=at position 0.65 with {\arrow{Stealth}}}, postaction=decorate] (1.5,-0.5 ) to [bend left] (1.5,0.5);
\draw [lightblue, decoration={markings, mark=at position 0.65 with {\arrow{Stealth}}}, postaction=decorate] (1.3,-0.7) -- (1,0.25);
\draw [lightblue, decoration={markings, mark=at position 0.65 with {\arrow{Stealth}}}, postaction=decorate] (0.5,0.2) -- (0.25,-0.1);
\draw [lightblue, decoration={markings, mark=at position 0.65 with {\arrow{Stealth}}}, postaction=decorate] (-0.25,-0.1) -- (0.25,-0.1);
\draw [lightblue, decoration={markings, mark=at position 0.65 with {\arrow{Stealth}}}, postaction=decorate] (-0.25,-0.1) -- (-0.5,0.2);
\draw [lightblue, decoration={markings, mark=at position 0.65 with {\arrow{Stealth}}}, postaction=decorate] (-0.25,-0.1) -- (-0.5,-0.4);
\draw [lightblue, decoration={markings, mark=at position 0.65 with {\arrow{Stealth}}}, postaction=decorate] (-1,-1) -- (-0.5,-0.4);
\draw [lightblue, decoration={markings, mark=at position 0.65 with {\arrow{Stealth}}}, postaction=decorate] (-0.25,-0.7) -- (-0.5,-0.4);
\draw [lightblue, decoration={markings, mark=at position 0.65 with {\arrow{Stealth}}}, postaction=decorate] (-0.25,-0.7) -- (0.25,-0.7);
\draw [lightblue, decoration={markings, mark=at position 0.65 with {\arrow{Stealth}}}, postaction=decorate] (0.5,-0.4) -- (0.25,-0.7);
\draw [lightblue, decoration={markings, mark=at position 0.65 with {\arrow{Stealth}}}, postaction=decorate] (0.5,-0.4) -- (0.25,-0.1);
\draw [lightblue, decoration={markings, mark=at position 0.65 with {\arrow{Stealth}}}, postaction=decorate] (0.5,-0.4) -- (1,-1);
\draw [lightblue, decoration={markings, mark=at position 0.65 with {\arrow{Stealth}}}, postaction=decorate] (-0.25,-0.7) -- (-0.5,-1.5);
\draw [lightblue, decoration={markings, mark=at position 0.65 with {\arrow{Stealth}}}, postaction=decorate] (0.25,-0.7) -- (0.5,-1.5);
\draw [lightblue, decoration={markings, mark=at position 0.65 with {\arrow{Stealth}}}, postaction=decorate] (-1.5,-0.5) to [bend right] (-1.5,0.5);
\coordinate (0) at (0,1.8);
\coordinate (1) at (0,1.2);
\coordinate (2) at (0,0.8);
\coordinate (3) at (-0.8,0.6);
\coordinate (4) at (0.8,0.6);
\coordinate (5) at (0,0.2);
\coordinate (6) at (-1.8,0);
\coordinate (7) at (1.2,0.1);
\coordinate (8) at (1.8,0);
\coordinate (9) at (-1,-0.2);
\coordinate (10) at (0,-0.4);
\coordinate (11) at (0.8,-0.2);
\coordinate (12) at (-0.6,-0.8);
\coordinate (13) at (0.6,-0.8);
\coordinate (14) at (0,-1.8);
\foreach \i in {0,...,14}
{
\node [red] at (\i) [] {$\bullet$};
}
\draw [red, decoration={markings, mark=at position 0.65 with {\arrow{Stealth}}}, postaction=decorate] (0) -- (1);
\draw [red, decoration={markings, mark=at position 0.65 with {\arrow{Stealth}}}, postaction=decorate] (2) -- (1);
\draw [red, decoration={markings, mark=at position 0.65 with {\arrow{Stealth}}}, postaction=decorate] (2) -- (3);
\draw [red, decoration={markings, mark=at position 0.65 with {\arrow{Stealth}}}, postaction=decorate] (4) -- (2);
\draw [red, decoration={markings, mark=at position 0.65 with {\arrow{Stealth}}}, postaction=decorate] (2) -- (5);
\draw [red, decoration={markings, mark=at position 0.65 with {\arrow{Stealth}}}, postaction=decorate] (5) -- (3);
\draw [red, decoration={markings, mark=at position 0.65 with {\arrow{Stealth}}}, postaction=decorate] (5) -- (4);
\draw [red, decoration={markings, mark=at position 0.65 with {\arrow{Stealth}}}, postaction=decorate] (9) -- (6);
\draw [red, decoration={markings, mark=at position 0.65 with {\arrow{Stealth}}}, postaction=decorate] (3) -- (9);
\draw [red, decoration={markings, mark=at position 0.65 with {\arrow{Stealth}}}, postaction=decorate] (9) -- (5);
\draw [red, decoration={markings, mark=at position 0.65 with {\arrow{Stealth}}}, postaction=decorate] (5) -- (10);
\draw [red, decoration={markings, mark=at position 0.65 with {\arrow{Stealth}}}, postaction=decorate] (10) -- (9);
\draw [red, decoration={markings, mark=at position 0.65 with {\arrow{Stealth}}}, postaction=decorate] (9) -- (12);
\draw [red, decoration={markings, mark=at position 0.65 with {\arrow{Stealth}}}, postaction=decorate] (12) -- (10);
\draw [red, decoration={markings, mark=at position 0.65 with {\arrow{Stealth}}}, postaction=decorate] (10) -- (14);
\draw [red, decoration={markings, mark=at position 0.65 with {\arrow{Stealth}}}, postaction=decorate] (14) -- (12);
\draw [red, decoration={markings, mark=at position 0.65 with {\arrow{Stealth}}}, postaction=decorate] (4) -- (11);
\draw [red, decoration={markings, mark=at position 0.65 with {\arrow{Stealth}}}, postaction=decorate] (11) -- (5);
\draw [red, decoration={markings, mark=at position 0.65 with {\arrow{Stealth}}}, postaction=decorate] (10) -- (11);
\draw [red, decoration={markings, mark=at position 0.65 with {\arrow{Stealth}}}, postaction=decorate] (7) -- (4);
\draw [red, decoration={markings, mark=at position 0.65 with {\arrow{Stealth}}}, postaction=decorate] (11) -- (7);
\draw [red, decoration={markings, mark=at position 0.65 with {\arrow{Stealth}}}, postaction=decorate] (8) -- (7);
\draw [red, decoration={markings, mark=at position 0.65 with {\arrow{Stealth}}}, postaction=decorate] (11) -- (13);
\draw [red, decoration={markings, mark=at position 0.65 with {\arrow{Stealth}}}, postaction=decorate] (13) -- (10);
\draw [red, decoration={markings, mark=at position 0.65 with {\arrow{Stealth}}}, postaction=decorate] (14) -- (13);
\end{tikzpicture}
\caption{Local picture of an $\mathcal{A}$-web $W$ and its dual surfacoid $W^*$.}
\label{figure:dualgr}
\end{figure}

\begin{example} Continuing as in Example \ref{minimal.ladder}, let $W$ be a minimal ladder in a disk with two marked points. The dual surfacoid $W^*$ is as in the left picture of Figure \ref{fig: bigon regions}. Note that each ``I'' in $W$ corresponds to a rhombus formed by two adjacent triangles in $W^*$. Moreover, by choosing a boundary marked point and a boundary $\alpha$ of the bigon, the ``I''s in the staircase of each ascending arc along $\alpha$ give rise to a parallelogram formed by a row of rhombi; we call such a parallelogram a \emph{pendant}. Moreover, we can joint each pendant with two chains of edges that go counterclockwise around all other pendants and go to the two vertices near the marked points of the bigon. We call the union of a pendant and its two chains of edges a \emph{necklace}. We call each of the two chains of edges a \emph{string} of the necklace. For each necklace, we can travel from the region near one marked point of the bigon, along a string, and then either clockwise or counterclockwise around the pendant, and then along the other string, to get to the region near the other marked point of the bigon; we call these two paths of edges the two \emph{boundary paths} of the necklace. Note that necklaces can only intersect along their boundary paths, and the union of all necklaces is the whole of $W^*$. 
\end{example}

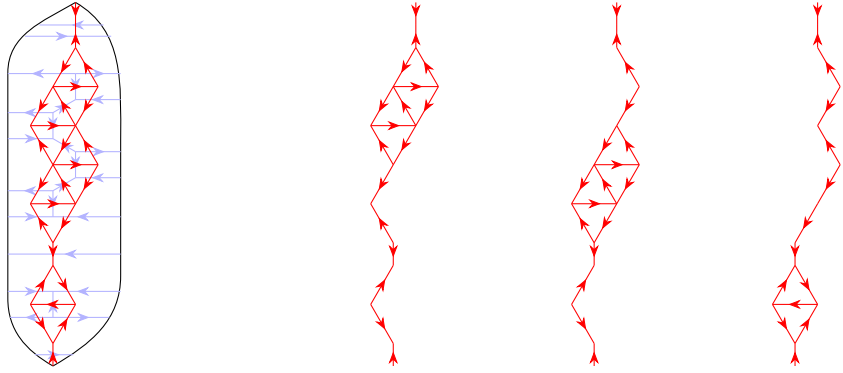
\begin{figure}[H]

    \centering
    \begin{tikzpicture}[scale=0.6]
        \draw (0.5,0.5+0.866*2+1.5+0.866*5) to [out=-30,in=90] (1.5,5) -- (1.5,2) to [out=-90,in=30] (0,0);
        \draw (0.5,0.5+0.866*2+1.5+0.866*5) to [out=-150,in=90] (-1,6.5) -- (-1,1.5) to [out=-90,in=150] (0,0);
        
        \draw [lightblue, decoration={markings, mark=at position 0.65 with {\arrow{Stealth}}}, postaction=decorate] (0,0.5+0.866*2/3) -- (-0.95,0.5+0.866*2/3);
        \draw [lightblue, decoration={markings, mark=at position 0.65 with {\arrow{Stealth}}}, postaction=decorate] (0,0.5+0.866*2/3) -- (1.25,0.5+0.866*2/3);
        \draw [lightblue, decoration={markings, mark=at position 0.65 with {\arrow{Stealth}}}, postaction=decorate] (-0.4,0.25) -- (0.45,0.25);
        
        \draw [lightblue, decoration={markings, mark=at position 0.65 with {\arrow{Stealth}}}, postaction=decorate] (0,0.5+0.866*2/3) -- (0,0.5+0.866*4/3);
        \draw [lightblue, decoration={markings, mark=at position 0.65 with {\arrow{Stealth}}}, postaction=decorate] (-1,0.5+0.866*4/3) -- (0,0.5+0.866*4/3);
        
        \draw [lightblue, decoration={markings, mark=at position 0.65 with {\arrow{Stealth}}}, postaction=decorate] (1.5,0.5+0.866*4/3) -- (0,0.5+0.866*4/3);
        
        \draw [lightblue, decoration={markings, mark=at position 0.5 with {\arrow{Stealth}}}, postaction=decorate] (1.5,0.5+0.866*2+0.25) -- (-1,0.5+0.866*2+0.25);
        
        \draw [lightblue, decoration={markings, mark=at position 0.65 with {\arrow{Stealth}}}, postaction=decorate] (-1,0.5+0.866*8/3+0.5) -- (0,0.5+0.866*8/3+0.5);
        \draw [lightblue, decoration={markings, mark=at position 0.65 with {\arrow{Stealth}}}, postaction=decorate] (1.5,0.5+0.866*8/3+0.5) -- (0,0.5+0.866*8/3+0.5);
        \draw [lightblue, decoration={markings, mark=at position 0.65 with {\arrow{Stealth}}}, postaction=decorate] (0,0.5+0.866*10/3+0.5) -- (0,0.5+0.866*8/3+0.5);
        \draw [lightblue, decoration={markings, mark=at position 0.65 with {\arrow{Stealth}}}, postaction=decorate] (0,0.5+0.866*10/3+0.5) -- (-1, 0.5+0.866*10/3+0.5);
        \draw [lightblue, decoration={markings, mark=at position 0.65 with {\arrow{Stealth}}}, postaction=decorate] (0,0.5+0.866*10/3+0.5) -- (0.5,0.5+0.866*11/3+0.5);
        \draw [lightblue, decoration={markings, mark=at position 0.65 with {\arrow{Stealth}}}, postaction=decorate] (1.5,0.5+0.866*11/3+0.5) -- (0.5,0.5+0.866*11/3+0.5);
        \draw [lightblue, decoration={markings, mark=at position 0.65 with {\arrow{Stealth}}}, postaction=decorate] (0.5,0.5+0.866*13/3+0.5) -- (0.5,0.5+0.866*11/3+0.5);
        \draw [lightblue, decoration={markings, mark=at position 0.65 with {\arrow{Stealth}}}, postaction=decorate] (0.5,0.5+0.866*13/3+0.5) -- (0,0.5+0.866*14/3+0.5);
           
        \draw [lightblue, decoration={markings, mark=at position 0.65 with {\arrow{Stealth}}}, postaction=decorate] (-1,0.5+0.866*14/3+0.5) -- (0,0.5+0.866*14/3+0.5);
        \draw [lightblue, decoration={markings, mark=at position 0.65 with {\arrow{Stealth}}}, postaction=decorate] (0.5,0.5+0.866*13/3+0.5) -- (1.5,0.5+0.866*13/3+0.5);
        
        \draw [lightblue, decoration={markings, mark=at position 0.65 with {\arrow{Stealth}}}, postaction=decorate] (0,0.5+0.866*16/3+0.5) -- (0,0.5+0.866*14/3+0.5);
        \draw [lightblue, decoration={markings, mark=at position 0.65 with {\arrow{Stealth}}}, postaction=decorate] (0,0.5+0.866*16/3+0.5) -- (-1, 0.5+0.866*16/3+0.5);
        \draw [lightblue, decoration={markings, mark=at position 0.65 with {\arrow{Stealth}}}, postaction=decorate] (0,0.5+0.866*16/3+0.5) -- (0.5,0.5+0.866*17/3+0.5);

         \draw [lightblue, decoration={markings, mark=at position 0.65 with {\arrow{Stealth}}}, postaction=decorate] (1.5,0.5+0.866*17/3+0.5) -- (0.5,0.5+0.866*17/3+0.5);
        \draw [lightblue, decoration={markings, mark=at position 0.65 with {\arrow{Stealth}}}, postaction=decorate] (0.5,0.5+0.866*19/3+0.5) -- (0.5,0.5+0.866*17/3+0.5);
        \draw [lightblue, decoration={markings, mark=at position 0.65 with {\arrow{Stealth}}}, postaction=decorate] (0.5,0.5+0.866*19/3+0.5) -- (-1,0.5+0.866*19/3+0.5);
        \draw [lightblue, decoration={markings, mark=at position 0.65 with {\arrow{Stealth}}}, postaction=decorate] (0.5,0.5+0.866*19/3+0.5) -- (1.5,0.5+0.866*19/3+0.5);

(0.5,0.5+0.866*2+0.5+0.866*5)
        
        \draw [lightblue, decoration={markings, mark=at position 0.5 with {\arrow{Stealth}}}, postaction=decorate] (-0.62,0.5+0.866*7+0.75) -- (1.29,0.5+0.866*7+0.75);
        \draw [lightblue, decoration={markings, mark=at position 0.5 with {\arrow{Stealth}}}, postaction=decorate] (1.12,0.5+0.866*7+1) -- (-0.3,0.5+0.866*7+1);

        \draw [red, decoration={markings, mark=at position 0.65 with {\arrow{Stealth}}}, postaction=decorate] (0,0) -- (0,0.5);
        
        \draw [red, decoration={markings, mark=at position 0.65 with {\arrow{Stealth}}}, postaction=decorate] (0,0.5) -- (0.5,0.5+0.866);
        \draw [red, decoration={markings, mark=at position 0.65 with {\arrow{Stealth}}}, postaction=decorate] (0.5,0.5+0.866) -- (-0.5,0.5+0.866);
        \draw [red, decoration={markings, mark=at position 0.65 with {\arrow{Stealth}}}, postaction=decorate](-0.5,0.5+0.866) -- (0,0.5);
        
        \draw [red, decoration={markings, mark=at position 0.65 with {\arrow{Stealth}}}, postaction=decorate] (-0.5,0.5+0.866) -- (0,0.5+0.866*2);
        \draw [red, decoration={markings, mark=at position 0.65 with {\arrow{Stealth}}}, postaction=decorate] (0,0.5+0.866*2) -- (0.5,0.5+0.866);
        
        \draw [red, decoration={markings, mark=at position 0.65 with {\arrow{Stealth}}}, postaction=decorate] (0,1+0.866*2) -- (0,0.5+0.866*2);
        
        \draw [red, decoration={markings, mark=at position 0.65 with {\arrow{Stealth}}}, postaction=decorate] (0.5,0.5+0.866+0.5+0.866*2) --(0,0.5+0.5+0.866*2);
        \draw [red, decoration={markings, mark=at position 0.65 with {\arrow{Stealth}}}, postaction=decorate] (-0.5,0.5+0.866+0.5+0.866*2)--(0.5,0.5+0.866+0.5+0.866*2);
        \draw [red, decoration={markings, mark=at position 0.65 with {\arrow{Stealth}}}, postaction=decorate] (0,0.5+0.5+0.866*2)--(-0.5,0.5+0.866+0.5+0.866*2);
        
        \draw [red, decoration={markings, mark=at position 0.65 with {\arrow{Stealth}}}, postaction=decorate] (0,0.5+0.866*2+0.5+0.866*2)--(-0.5,0.5+0.866+0.5+0.866*2);
        \draw [red, decoration={markings, mark=at position 0.65 with {\arrow{Stealth}}}, postaction=decorate]  (0.5,0.5+0.866+0.5+0.866*2) --(0,0.5+0.866*2+0.5+0.866*2);
        
        \draw [red, decoration={markings, mark=at position 0.65 with {\arrow{Stealth}}}, postaction=decorate] (0,0.5+0.866*2+0.5+0.866*2) -- (1,0.5+0.866*2+0.5+0.866*2);
        \draw [red, decoration={markings, mark=at position 0.65 with {\arrow{Stealth}}}, postaction=decorate] (1,0.5+0.866*2+0.5+0.866*2) --(0.5,0.5+0.866+0.5+0.866*2) ;
        \draw [red, decoration={markings, mark=at position 0.65 with {\arrow{Stealth}}}, postaction=decorate] (1,0.5+0.866*2+0.5+0.866*2) --(0.5,0.5+0.866*3+0.5+0.866*2);
        \draw [red, decoration={markings, mark=at position 0.65 with {\arrow{Stealth}}}, postaction=decorate] (0.5,0.5+0.866*3+0.5+0.866*2) --(0,0.5+0.866*2+0.5+0.866*2);
        
        \draw [red, decoration={markings, mark=at position 0.65 with {\arrow{Stealth}}}, postaction=decorate] (0,0.5+0.866*2+0.5+0.866*2) --(-0.5,0.5+0.866*2+0.5+0.866*3);
        \draw [red, decoration={markings, mark=at position 0.65 with {\arrow{Stealth}}}, postaction=decorate] (-0.5,0.5+0.866*2+0.5+0.866*3) -- (0.5,0.5+0.866*2+0.5+0.866*3);
        \draw [red, decoration={markings, mark=at position 0.65 with {\arrow{Stealth}}}, postaction=decorate]  (0.5,0.5+0.866*2+0.5+0.866*3) -- (0,0.5+0.866*2+0.5+0.866*4);
        \draw [red, decoration={markings, mark=at position 0.65 with 
        {\arrow{Stealth}}}, postaction=decorate]  (0,0.5+0.866*2+0.5+0.866*4) -- (-0.5,0.5+0.866*2+0.5+0.866*3); 
        \draw [red, decoration={markings, mark=at position 0.65 with 
        {\arrow{Stealth}}}, postaction=decorate]  (0,0.5+0.866*2+0.5+0.866*4) -- (1,0.5+0.866*2+0.5+0.866*4);
        \draw [red, decoration={markings, mark=at position 0.65 with 
        {\arrow{Stealth}}}, postaction=decorate]  (1,0.5+0.866*2+0.5+0.866*4)--(0.5,0.5+0.866*2+0.5+0.866*3);
        
        \draw [red, decoration={markings, mark=at position 0.65 with 
        {\arrow{Stealth}}}, postaction=decorate]  (1,0.5+0.866*2+0.5+0.866*4)--(0.5,0.5+0.866*2+0.5+0.866*5);
        \draw [red, decoration={markings, mark=at position 0.65 with 
        {\arrow{Stealth}}}, postaction=decorate]  (0.5,0.5+0.866*2+0.5+0.866*5)--(0,0.5+0.866*2+0.5+0.866*4);

        \draw [red, decoration={markings, mark=at position 0.65 with 
        {\arrow{Stealth}}}, postaction=decorate]  (0.5,0.5+0.866*2+0.5+0.866*5)
        --(0.5,0.5+0.866*2+1+0.866*5);
        \draw [red, decoration={markings, mark=at position 0.65 with 
        {\arrow{Stealth}}}, postaction=decorate]  (0.5,0.5+0.866*2+1.5+0.866*5)
        --(0.5,0.5+0.866*2+1+0.866*5);
   
    \end{tikzpicture}
    \hspace{3cm}
      \begin{tikzpicture}[scale=0.6]
       
        \draw [red, decoration={markings, mark=at position 0.65 with {\arrow{Stealth}}}, postaction=decorate] (0,0) -- (0,0.5);

        \draw [red, decoration={markings, mark=at position 0.65 with {\arrow{Stealth}}}, postaction=decorate](-0.5,0.5+0.866) -- (0,0.5);
        
        \draw [red, decoration={markings, mark=at position 0.65 with {\arrow{Stealth}}}, postaction=decorate] (-0.5,0.5+0.866) -- (0,0.5+0.866*2);

        \draw [red, decoration={markings, mark=at position 0.65 with {\arrow{Stealth}}}, postaction=decorate] (0,1+0.866*2) -- (0,0.5+0.866*2);
        \draw [red, decoration={markings, mark=at position 0.65 with {\arrow{Stealth}}}, postaction=decorate] (0,0.5+0.5+0.866*2)--(-0.5,0.5+0.866+0.5+0.866*2);
        
        \draw [red, decoration={markings, mark=at position 0.65 with {\arrow{Stealth}}}, postaction=decorate] (0,0.5+0.866*2+0.5+0.866*2)--(-0.5,0.5+0.866+0.5+0.866*2);
        \draw [red, decoration={markings, mark=at position 0.65 with {\arrow{Stealth}}}, postaction=decorate] (0.5,0.5+0.866*3+0.5+0.866*2) --(0,0.5+0.866*2+0.5+0.866*2);
        
        \draw [red, decoration={markings, mark=at position 0.65 with {\arrow{Stealth}}}, postaction=decorate] (0,0.5+0.866*2+0.5+0.866*2) --(-0.5,0.5+0.866*2+0.5+0.866*3);
        \draw [red, decoration={markings, mark=at position 0.65 with {\arrow{Stealth}}}, postaction=decorate] (-0.5,0.5+0.866*2+0.5+0.866*3) -- (0.5,0.5+0.866*2+0.5+0.866*3);
        \draw [red, decoration={markings, mark=at position 0.65 with {\arrow{Stealth}}}, postaction=decorate]  (0.5,0.5+0.866*2+0.5+0.866*3) -- (0,0.5+0.866*2+0.5+0.866*4);
        \draw [red, decoration={markings, mark=at position 0.65 with 
        {\arrow{Stealth}}}, postaction=decorate]  (0,0.5+0.866*2+0.5+0.866*4) -- (-0.5,0.5+0.866*2+0.5+0.866*3); 
        \draw [red, decoration={markings, mark=at position 0.65 with 
        {\arrow{Stealth}}}, postaction=decorate]  (0,0.5+0.866*2+0.5+0.866*4) -- (1,0.5+0.866*2+0.5+0.866*4);
        \draw [red, decoration={markings, mark=at position 0.65 with 
        {\arrow{Stealth}}}, postaction=decorate]  (1,0.5+0.866*2+0.5+0.866*4)--(0.5,0.5+0.866*2+0.5+0.866*3);
        
        \draw [red, decoration={markings, mark=at position 0.65 with 
        {\arrow{Stealth}}}, postaction=decorate]  (1,0.5+0.866*2+0.5+0.866*4)--(0.5,0.5+0.866*2+0.5+0.866*5);
        \draw [red, decoration={markings, mark=at position 0.65 with 
        {\arrow{Stealth}}}, postaction=decorate]  (0.5,0.5+0.866*2+0.5+0.866*5)--(0,0.5+0.866*2+0.5+0.866*4);

        \draw [red, decoration={markings, mark=at position 0.65 with 
        {\arrow{Stealth}}}, postaction=decorate]  (0.5,0.5+0.866*2+0.5+0.866*5)
        --(0.5,0.5+0.866*2+1+0.866*5);
        \draw [red, decoration={markings, mark=at position 0.65 with {\arrow{Stealth}}}, postaction=decorate]  (0.5,0.5+0.866*2+1.5+0.866*5) --(0.5,0.5+0.866*2+1+0.866*5);
   
    \end{tikzpicture}
  \hspace{1.5cm}
      \begin{tikzpicture}[scale=0.6]
       
        \draw [red, decoration={markings, mark=at position 0.65 with {\arrow{Stealth}}}, postaction=decorate] (0,0) -- (0,0.5);
        \draw [red, decoration={markings, mark=at position 0.65 with {\arrow{Stealth}}}, postaction=decorate](-0.5,0.5+0.866) -- (0,0.5);
        
        \draw [red, decoration={markings, mark=at position 0.65 with {\arrow{Stealth}}}, postaction=decorate] (-0.5,0.5+0.866) -- (0,0.5+0.866*2);

        \draw [red, decoration={markings, mark=at position 0.65 with {\arrow{Stealth}}}, postaction=decorate] (0,1+0.866*2) -- (0,0.5+0.866*2);
        
        \draw [red, decoration={markings, mark=at position 0.65 with {\arrow{Stealth}}}, postaction=decorate] (0.5,0.5+0.866+0.5+0.866*2) --(0,0.5+0.5+0.866*2);
        \draw [red, decoration={markings, mark=at position 0.65 with {\arrow{Stealth}}}, postaction=decorate] (-0.5,0.5+0.866+0.5+0.866*2)--(0.5,0.5+0.866+0.5+0.866*2);
        \draw [red, decoration={markings, mark=at position 0.65 with {\arrow{Stealth}}}, postaction=decorate] (0,0.5+0.5+0.866*2)--(-0.5,0.5+0.866+0.5+0.866*2);
        
        \draw [red, decoration={markings, mark=at position 0.65 with {\arrow{Stealth}}}, postaction=decorate] (0,0.5+0.866*2+0.5+0.866*2)--(-0.5,0.5+0.866+0.5+0.866*2);
        \draw [red, decoration={markings, mark=at position 0.65 with {\arrow{Stealth}}}, postaction=decorate]  (0.5,0.5+0.866+0.5+0.866*2) --(0,0.5+0.866*2+0.5+0.866*2);
        
        \draw [red, decoration={markings, mark=at position 0.65 with {\arrow{Stealth}}}, postaction=decorate] (0,0.5+0.866*2+0.5+0.866*2) -- (1,0.5+0.866*2+0.5+0.866*2);
        \draw [red, decoration={markings, mark=at position 0.65 with {\arrow{Stealth}}}, postaction=decorate] (1,0.5+0.866*2+0.5+0.866*2) --(0.5,0.5+0.866+0.5+0.866*2) ;
        \draw [red, decoration={markings, mark=at position 0.65 with {\arrow{Stealth}}}, postaction=decorate] (1,0.5+0.866*2+0.5+0.866*2) --(0.5,0.5+0.866*3+0.5+0.866*2);
        \draw [red, decoration={markings, mark=at position 0.65 with {\arrow{Stealth}}}, postaction=decorate] (0.5,0.5+0.866*3+0.5+0.866*2) --(0,0.5+0.866*2+0.5+0.866*2);
        \draw [red, decoration={markings, mark=at position 0.65 with 
        {\arrow{Stealth}}}, postaction=decorate]  (1,0.5+0.866*2+0.5+0.866*4)--(0.5,0.5+0.866*2+0.5+0.866*3);
        \draw [red, decoration={markings, mark=at position 0.65 with 
        {\arrow{Stealth}}}, postaction=decorate]  (1,0.5+0.866*2+0.5+0.866*4)--(0.5,0.5+0.866*2+0.5+0.866*5);
        \draw [red, decoration={markings, mark=at position 0.65 with 
        {\arrow{Stealth}}}, postaction=decorate]  (0.5,0.5+0.866*2+0.5+0.866*5)
        --(0.5,0.5+0.866*2+1+0.866*5);
        \draw [red, decoration={markings, mark=at position 0.65 with 
        {\arrow{Stealth}}}, postaction=decorate]  (0.5,0.5+0.866*2+1.5+0.866*5)
        --(0.5,0.5+0.866*2+1+0.866*5);
   
    \end{tikzpicture}
    \hspace{1.5cm}
      \begin{tikzpicture}[scale=0.6]
       
        \draw [red, decoration={markings, mark=at position 0.65 with {\arrow{Stealth}}}, postaction=decorate] (0,0) -- (0,0.5);
        \draw [red, decoration={markings, mark=at position 0.65 with {\arrow{Stealth}}}, postaction=decorate](-0.5,0.5+0.866) -- (0,0.5);
        \draw [red, decoration={markings, mark=at position 0.65 with {\arrow{Stealth}}}, postaction=decorate] (0.5,0.5+0.866) -- (-0.5,0.5+0.866);
        
        \draw [red, decoration={markings, mark=at position 0.65 with {\arrow{Stealth}}}, postaction=decorate] (-0.5,0.5+0.866) -- (0,0.5+0.866*2);
        \draw [red, decoration={markings, mark=at position 0.65 with {\arrow{Stealth}}}, postaction=decorate] (0,0.5) -- (0.5,0.5+0.866);

        \draw [red, decoration={markings, mark=at position 0.65 with {\arrow{Stealth}}}, postaction=decorate] (0,0.5+0.866*2) -- (0.5,0.5+0.866);
        
        \draw [red, decoration={markings, mark=at position 0.65 with {\arrow{Stealth}}}, postaction=decorate] (0,1+0.866*2) -- (0,0.5+0.866*2);
        
        \draw [red, decoration={markings, mark=at position 0.65 with {\arrow{Stealth}}}, postaction=decorate] (0.5,0.5+0.866+0.5+0.866*2) --(0,0.5+0.5+0.866*2);
       
        \draw [red, decoration={markings, mark=at position 0.65 with {\arrow{Stealth}}}, postaction=decorate] (1,0.5+0.866*2+0.5+0.866*2) --(0.5,0.5+0.866+0.5+0.866*2) ;
        \draw [red, decoration={markings, mark=at position 0.65 with {\arrow{Stealth}}}, postaction=decorate] (1,0.5+0.866*2+0.5+0.866*2) --(0.5,0.5+0.866*3+0.5+0.866*2);
        
        \draw [red, decoration={markings, mark=at position 0.65 with 
        {\arrow{Stealth}}}, postaction=decorate]  (1,0.5+0.866*2+0.5+0.866*4)--(0.5,0.5+0.866*2+0.5+0.866*3);
        \draw [red, decoration={markings, mark=at position 0.65 with 
        {\arrow{Stealth}}}, postaction=decorate]  (1,0.5+0.866*2+0.5+0.866*4)--(0.5,0.5+0.866*2+0.5+0.866*5);
        \draw [red, decoration={markings, mark=at position 0.65 with 
        {\arrow{Stealth}}}, postaction=decorate]  (0.5,0.5+0.866*2+0.5+0.866*5)
        --(0.5,0.5+0.866*2+1+0.866*5);
        \draw [red, decoration={markings, mark=at position 0.65 with 
        {\arrow{Stealth}}}, postaction=decorate]  (0.5,0.5+0.866*2+1.5+0.866*5)
        --(0.5,0.5+0.866*2+1+0.866*5);
   
    \end{tikzpicture}
    
    \caption{Left: a bigon region and an example of a dual surfacoid $W^*$. Right: the three necklaces in the surfacoid $W^*$ defined by the three ascending arcs along the left boundary of the bigon with respect to the top marked point.}
    \label{fig: bigon regions}
\end{figure}

\begin{example} Continuing as in Example \ref{tri-redu-web}, let $W$ be a reduced $\mathcal{A}$ web on a disk $\Delta$ with three marked points.  The dual surfacoid $W^*$ is as in Figure \ref{fig: two types of regions}. It is the union of a triangle consisting of oriented $3$-cycles and three possible oriented curves going from each corner of the triangle to a marked point. We call dual surfacoid $W^*$ of this form a \emph{net}; in particular, we refer to the subgraph spanned by oriented $3$-cycles as the \emph{mesh} of the net $W^*$ and the three curves as the \emph{strings} of the net $W^*$.
Let $\alpha$ be a boundary interval of $\Delta$. 
Tracing the edges of $W$ intersected by $\alpha$ results in a path in $W^*$ with two strings and one side of the mesh, which we continue to call $\alpha$ and refer to it as a \emph{boundary path} of the net $W^*$.

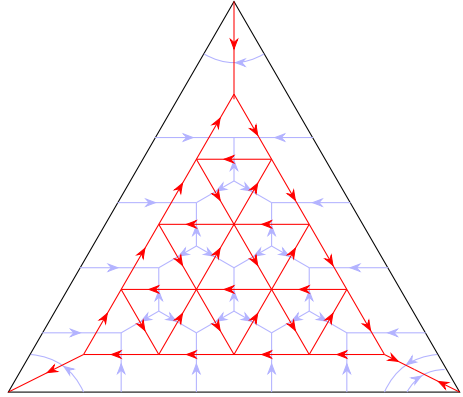
\begin{figure}[H]
    \centering
    \begin{tikzpicture}
    \draw (-3*0.5+3+0.5,6*0.866-0.5) -- (-1,-0.5) -- (5,-0.5) -- cycle;
    \foreach \i in {0,1,2}
     {
     \foreach \j in {\i,...,2}
     {
     \draw [lightblue, decoration={markings, mark=at position 0.5 with {\arrow{Stealth}}}, postaction=decorate] (-\i*0.5+\j+1,\i*0.866+0.866*2/3) -- (-\i*0.5+\j+1,\i*0.866+0.866*4/3);
     \draw [lightblue, decoration={markings, mark=at position 0.5 with {\arrow{Stealth}}}, postaction=decorate] (-\i*0.5+\j+1,\i*0.866+0.866*2/3) -- (-\i*0.5+\j+0.5,\i*0.866+0.866/3);
     \draw [lightblue, decoration={markings, mark=at position 0.5 with {\arrow{Stealth}}}, postaction=decorate] (-\i*0.5+\j+1,\i*0.866+0.866*2/3) -- (-\i*0.5+\j+1.5,\i*0.866+0.866/3);
     }
     }
     \foreach \i in {0,...,3}
     {
     \draw [lightblue, decoration={markings, mark=at position 0.5 with {\arrow{Stealth}}}, postaction=decorate] (\i*0.5-0.55, \i*0.866+0.866/3)--(-\i*0.5+\i+0.5,\i*0.866+0.866/3);
     \draw [lightblue, decoration={markings, mark=at position 0.5 with {\arrow{Stealth}}}, postaction=decorate] (4.55-\i*0.5, \i*0.866+0.866/3)--(-\i*0.5+3+0.5,\i*0.866+0.866/3);
     \draw [lightblue, decoration={markings, mark=at position 0.5 with {\arrow{Stealth}}}, postaction=decorate] (\i+0.5, -0.5)--(\i+0.5,0.866/3);
     }
     \foreach \i in {0,...,3}
     {
     \foreach \j in {\i,...,3}
     {
     \draw [red, decoration={markings, mark=at position 0.65 with {\arrow{Stealth}}}, postaction=decorate] (-\i*0.5+\j,\i*0.866) -- (-\i*0.5+\j+0.5,\i*0.866+0.866);
     \draw [red, decoration={markings, mark=at position 0.65 with {\arrow{Stealth}}}, postaction=decorate] (-\i*0.5+\j+0.5,\i*0.866+0.866) -- (-\i*0.5+\j+1,\i*0.866);
     \draw [red, decoration={markings, mark=at position 0.65 with {\arrow{Stealth}}}, postaction=decorate] (-\i*0.5+\j+1,\i*0.866) -- (-\i*0.5+\j,\i*0.866);
     }
     }
     \draw [lightblue, decoration={markings, mark=at position 0.5 with {\arrow{Stealth}}}, postaction=decorate] (2.4,4) to [bend left] (1.6,4);
     \draw [lightblue, decoration={markings, mark=at position 0.5 with {\arrow{Stealth}}}, postaction=decorate] (0,-0.5) to [bend right] (-0.7,0);
     \draw [lightblue, decoration={markings, mark=at position 0.5 with {\arrow{Stealth}}}, postaction=decorate] (4.3,-0.5) to [bend left] (4.8,-0.2);
     \draw [lightblue, decoration={markings, mark=at position 0.5 with {\arrow{Stealth}}}, postaction=decorate] (4.7,0) to [bend right] (4,-0.5);
     \draw [red, decoration={markings, mark=at position 0.5 with {\arrow{Stealth}}}, postaction=decorate] (0,0) -- (-1,-0.5);
     \draw [red, decoration={markings, mark=at position 0.5 with {\arrow{Stealth}}}, postaction=decorate] (5,-0.5) -- (4.4,-0.2);
     \draw [red, decoration={markings, mark=at position 0.5 with {\arrow{Stealth}}}, postaction=decorate] (4,0) -- (4.4,-0.2);
     \draw [red, decoration={markings, mark=at position 0.5 with {\arrow{Stealth}}}, postaction=decorate] (2,6*0.866-0.5) -- (2,6*0.866-1.8);
    \end{tikzpicture}
    \caption{A triangle region}
    \label{fig: two types of regions}
\end{figure}
\end{example}

\begin{proposition}\label{prop: refinement} The dual surfacoid $W^*$ for a reduced web $W$ can be obtained by gluing nets and necklaces along their boundary paths. 
\end{proposition}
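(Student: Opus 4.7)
The proof plan is to use Proposition \ref{proposition:gp} to decompose the web $W$ into local pieces on a split ideal triangulation, dualize each piece using the two example calculations already established in the paper, and then verify that the local dual surfacoids glue consistently along the edges of the triangulation.

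First I would fix an arbitrary ideal triangulation $\mathcal{T}$ of $\hat{S}$ and pass to its split ideal triangulation $\mathcal{T}_2$. By Proposition \ref{proposition:gp}, the reduced web $W$ can be isotoped to a good position with respect to $\mathcal{T}_2$: it crosses each edge of $\mathcal{T}_2$ transversely, restricts to a minimal ladder on every bigon, and restricts to a reduced web on every triangle as in Example \ref{tri-redu-web}. Since the dual surfacoid $W^*$ is defined purely locally (a vertex in each component of $\hat{S}\setminus W$ and an edge across each edge of $W$), the restriction $(W|_P)^*$ to any piece $P$ of $\mathcal{T}_2$ equals the corresponding restriction of $W^*$, up to adding the trivial bookkeeping of the vertices lying on the arcs of $\mathcal{T}_2$.

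Next I would apply the two local computations already worked out in the excerpt. For each bigon $B$ of $\mathcal{T}_2$, the bigon example together with Figure \ref{fig: bigon regions} shows that $(W|_B)^*$ is the union of necklaces indexed by the ascending arcs along the two boundaries of $B$, glued along their boundary paths; here I would remark that the definition of a minimal ladder forces each rhombus of $W^*$ to correspond to exactly one ``I'' in $W$, hence to exactly one crossing of arcs in the schematic. For each triangle $T$ of $\mathcal{T}_2$, the triangle example shows that $(W|_T)^*$ is a net with mesh coming from the central honeycomb and three strings coming from the corner arcs, and the boundary of the net along each side of $T$ is exactly a boundary path (two strings and one side of the mesh).

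The last step, which I expect to be the main obstacle, is verifying that these local pieces actually glue along matching boundary paths. The key point is that along any edge $e$ of $\mathcal{T}_2$, the set of oriented intersections of $W$ with $e$ determines a sequence of dual vertices on $e$, and this sequence is the same whether viewed from the bigon/triangle on the left or on the right of $e$. I would show that on each side of $e$, the subgraph of the local dual surfacoid that lies in a collar of $e$ is precisely a single boundary path (of a necklace when $e$ lies on a bigon, or of a net when $e$ lies on a triangle); matching these two boundary paths produces the global $W^*$. This requires checking, on a case-by-case basis according to whether $e$ is incident to two bigons, two triangles, or one of each, that the ``string plus mesh side'' or ``string plus pendant boundary'' descriptions agree with the collar combinatorics, which follows immediately from the good position property since $W$ crosses $e$ with only outgoing arcs on each side in the bigon case and with a single uniform orientation pattern in the triangle case. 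Gluing all local pieces recovers $W^*$ as a union of nets and necklaces attached along their boundary paths, as claimed.
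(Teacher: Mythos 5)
Your proposal is correct and takes essentially the same route as the paper, whose entire proof is the one-line citation of Proposition \ref{proposition:gp} together with the preceding bigon and triangle examples identifying the local duals as necklaces and nets. Your additional verification that the local pieces glue along matching boundary paths is a reasonable elaboration of what the paper leaves implicit.
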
 
\begin{proof} It follows from Proposition \ref{proposition:gp}.
\end{proof}

\begin{defn} Let $\alpha$ be a boundary path of a net or a necklace and let $A$ and $B$ be vertices on $\alpha$. The path $\gamma_{AB}$ on $\alpha$ that goes directly from $A$ to $B$ is called the {\it straight path} from $A$ to $B$. 
\end{defn}

\begin{lemma}\label{lem: homotoping a path} 
Let $W^*$ be a net or a necklace and let $A$ and $B$ be vertices on a boundary path $\alpha$ of $W^*$. Then the straight path $\gamma_{AB}$ is a geodesic on $W^*$. 
\end{lemma}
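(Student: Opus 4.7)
The plan is to embed the net or necklace $W^*$ into the oriented triangular lattice $\Gamma$ from Figure \ref{G.Gamma} and then to apply Lemmas \ref{adnijn} and \ref{distance.asc}. The key observation is that every oriented $3$-cycle in the mesh of a net (or pendant of a necklace) is combinatorially identical to one of the oriented triangles of $\Gamma$, and each string, together with each outer edge of the mesh or pendant, runs along one of the three lattice directions of $\Gamma$. Hence the net or necklace will embed as an orientation-preserving subgraph $\iota \colon W^* \hookrightarrow \Gamma$. Since every path in $W^*$ is then a path in $\Gamma$ with identical edge lengths under the intersection metric, I will obtain the inequality
\[
d_{W^*}(A, B) \;\geq\; d_\Gamma(\iota(A), \iota(B)).
\]

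Next, I will trace the boundary path $\alpha$ under this embedding. For a net, $\alpha$ decomposes into two strings and one side of the mesh, giving three axial segments in $\Gamma$; for a necklace, $\alpha$ decomposes into two strings plus the portion of the pendant's boundary being wrapped around, giving three or four axial segments. In all cases I expect each axial segment of $\alpha$ to run along an arrow direction in the positive sense, so that the straight path $\gamma_{AB}$ becomes a concatenation of ``forward'' axis-aligned segments in $\Gamma$. Applying Lemma \ref{adnijn} when $\gamma_{AB}$ lies on a single axis, Lemma \ref{distance.asc} when it bends at a single turning vertex, and iterating over consecutive turning points in the multi-bend case, I will conclude
\[
|\gamma_{AB}| \;=\; d_\Gamma(\iota(A), \iota(B)).
\]

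Combining the two displays will give $d_{W^*}(A, B) = |\gamma_{AB}|$, which is exactly the statement that $\gamma_{AB}$ is a geodesic in $W^*$. The main obstacle I anticipate is setting up the embedding $\iota$ carefully and verifying that every axial segment of $\gamma_{AB}$ does indeed run in a positive arrow direction of $\Gamma$, so that the minimizing formula from Lemma \ref{adnijn} applies without cancellation. For a necklace, $\gamma_{AB}$ may bend several times when $A$ and $B$ lie on different strings on opposite sides of the pendant, so Lemma \ref{distance.asc} in its stated form is not directly sufficient; to handle this I would supplement it with a direct vector-decomposition argument, identical in spirit to the proof of Lemma \ref{adnijn}, showing that any path in $\Gamma$ whose edges all point in positive directions $\vec u$, $\vec v$, $\vec w$ from Lemma \ref{adnijn} realizes the minimum length between its endpoints as a fixed affine function of their coordinates.
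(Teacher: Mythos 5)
Your overall strategy---bounding $d_{W^*}(A,B)$ from below by a distance in $\Gamma$ and then checking that $\gamma_{AB}$ realizes that distance---is in the same spirit as the paper's use of Lemmas \ref{adnijn} and \ref{distance.asc}, but the global embedding you build it on does not exist, and this is a genuine gap rather than a technicality. The mesh of a net and the pendant of a necklace are indeed orientation-preserving subgraphs of $\Gamma$ (this is exactly what Figure \ref{G.Gamma.2} depicts), but the strings are not: each edge of a string is dual to a corner arc of the web (for a net) or to an arc of the minimal ladder (for a necklace), and these arcs occur with both orientations (in Example \ref{tri-redu-web} a corner carries arcs of both orientations, with both counts possibly nonzero). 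Hence a string is a chain of edges pointing alternately toward and away from the mesh or pendant, whereas along any lattice line of $\Gamma$ all edges point the same way; so there is no orientation-preserving map sending a string onto an ``axial segment run in the positive sense.'' Consequently $\iota(\gamma_{AB})$ is not a concatenation of forward axis-aligned segments, and Lemmas \ref{adnijn} and \ref{distance.asc} do not apply to it in the form you invoke them. One can try to rescue the idea by letting the strings zigzag in $\Gamma$ (only a length-preserving graph homomorphism, not an embedding, is needed for the inequality $d_{W^*}(A,B)\geq d_\Gamma(\iota(A),\iota(B))$), but then proving $|\gamma_{AB}|=d_\Gamma(\iota(A),\iota(B))$ requires choosing the zigzag so that every step of $\iota(\gamma_{AB})$ is extremal for a single one of the three linear functionals implicit in Lemma \ref{adnijn}; that choice depends on the ordered pair $(A,B)$ (the reversed path $\gamma_{BA}$ generally requires a different functional, hence a different map), and it must also be compatible with the fixed side of the mesh, respectively the corner of the pendant, that $\alpha$ traverses. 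None of this is addressed in your sketch, and your proposed supplement (a vector-decomposition argument for paths ``all of whose edges point in positive directions'') presupposes exactly the property that fails.

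The paper sidesteps all of this by never placing the strings inside $\Gamma$: given an arbitrary competing path $\gamma$, it first deletes excursions into the third string of a net, then cuts $\gamma$ at its returns to $\alpha$ into subpaths each lying either in $\alpha$ or in the mesh (resp.\ pendant), compares each mesh or pendant excursion with the straight path between its endpoints via Lemma \ref{distance.asc} (only the mesh or pendant is realized in $\Gamma$, positioned so that the relevant portion of $\alpha$ lies on the coordinate axes), and finishes with a telescoping estimate along $\alpha$. I would either adopt that decomposition, or, if you want to keep the global-map idea, carry out the zigzag construction in full, including the verification for both directions of traversal and for the corner of the pendant around which a necklace's boundary path bends.
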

\begin{proof} 
Let $\gamma$ be an arbitrary path from $A$ to $B$. It is equivalent to show that $|\gamma|\geq |\gamma_{AB}|$. 

First, we claim that if $W^*$ is a net, then we may assume $\gamma$ never travels along any edges inside the string $\beta$ that is not part of $\alpha$. To see this, let $C$ be the vertex jointing $\beta$ to the mesh. If $\gamma$ ever travels along an edge in $\beta$, then there must be a subpath $\gamma'$ of $\gamma$ that goes from $C$ into $\beta$ and then back out to $C$; removing this subpath $\gamma'$ from $\gamma$ shortens $\gamma$. 

Under the assumption above, we can find vertices $A=A_0, A_1,A_2,\dots, A_n=B$ in $\gamma\cap \alpha$ to split $\gamma$ into consecutive paths $\gamma_1,  \ldots, \gamma_n$ (with each $\gamma_i$ goes from $A_{i-1}$ to $A_i$) such that 
 
\begin{itemize}
\item[(a)] $\gamma_i$ is contained in the mesh (when $W^*$ is a net);
\item[(b)] $\gamma_i$ is contained in the pendant (when $W^*$ is a necklace);
\item[(c)] $\gamma_i$ is contained in $\alpha$;
\end{itemize}

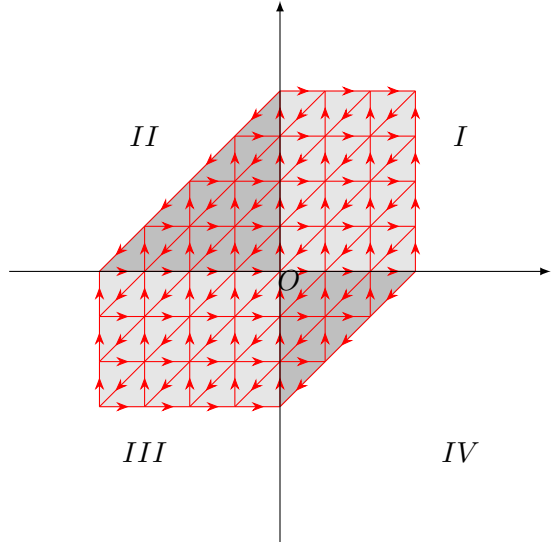
\begin{figure}[H]
\begin{tikzpicture}[scale=0.6]
\draw[gray!60, fill=gray!50] (-4,0)--(0,0)--(0,4)--(-4,0); 
\draw[gray!60, fill=gray!50] (0,-3)--(0,0)--(3,0)--(0,-3);
\draw[gray!20, fill=gray!20] (0,0)--(3,0)--(3,4)--(0,4)--(0,0);
\draw[gray!20, fill=gray!20] (-4,0)--(0,0)--(0,-3)--(-4,-3)--(-4,0);
\foreach \x in {0, ..., 3}
{\foreach \y in {0, ..., \x}
{ \draw [red, decoration={markings, mark=at position 0.65 with {\arrow{Stealth}}}, postaction=decorate] (\y-\x-1, \y) -- (\y-\x, \y);
 \draw [red, decoration={markings, mark=at position 0.65 with {\arrow{Stealth}}}, postaction=decorate] (-\y, \x-\y)-- (-\y, \x-\y+1);
 \draw [red, decoration={markings, mark=at position 0.65 with {\arrow{Stealth}}}, postaction=decorate] (\x-3, \y+1) -- (\x-4, \y);}}
\foreach \x in {0, ..., 2}
{\foreach \y in {0, ..., \x}
{ \draw [red, decoration={markings, mark=at position 0.65 with {\arrow{Stealth}}}, postaction=decorate] (\x-\y, -\y) -- (\x-\y+1, -\y);
 \draw [red, decoration={markings, mark=at position 0.65 with {\arrow{Stealth}}}, postaction=decorate] (\y, \y-\x-1)-- (\y, \y-\x);
 \draw [red, decoration={markings, mark=at position 0.65 with {\arrow{Stealth}}}, postaction=decorate] (3-\x, -\y) -- (2-\x, -\y-1);}} 
 \foreach \x in {0,...,3}
 {
\foreach \y in {0,...,2}
{
\draw [red, decoration={markings, mark=at position 0.65 with {\arrow{Stealth}}}, postaction=decorate] (\y+1,\x)--(\y+1, \x+1);
\draw [red, decoration={markings, mark=at position 0.65 with {\arrow{Stealth}}}, postaction=decorate] (\y,\x+1)--(\y+1, \x+1);
\draw [red, decoration={markings, mark=at position 0.65 with {\arrow{Stealth}}}, postaction=decorate] (\y+1,\x+1)--(\y, \x);
\draw [red, decoration={markings, mark=at position 0.65 with {\arrow{Stealth}}}, postaction=decorate] (-\x-1,-\y-1)--(-\x, -\y-1);
\draw [red, decoration={markings, mark=at position 0.65 with {\arrow{Stealth}}}, postaction=decorate] (-\x,-\y)--(-\x-1, -\y-1);
\draw [red, decoration={markings, mark=at position 0.65 with {\arrow{Stealth}}}, postaction=decorate] (-\x-1,-\y-1)--(-\x-1, -\y);
}
 }
\node at (0.2,-0.2) {$O$};
\node at (4,3) {$I$};
\node at (-3,3) {$II$};
\node at (-3,-4) {$III$};
\node at (4,-4) {$IV$};
\draw[-latex] (-6,0) --(6,0);
\draw[-latex] (0,-6)--(0,6);
\end{tikzpicture}
\caption{Subgraphs of $\Gamma$.}
\label{G.Gamma.2}
\end{figure}

As in Figure \ref{G.Gamma.2}, we place the mesh in the second or the fourth quadrant and place a pendant in the first or the third quadrant, realizing them as subgraphs of $\Gamma$. Let $\gamma_{A_{i-1}A_i}$ be the straight path from $A_{i-1}$ to $A_i$. By Lemma \ref{distance.asc}, we have
\[
|\gamma_i|\geq |\gamma_{A_{i-1}A_i}|.
\]
Therefore
\[
|\gamma|=\sum_{i} |\gamma_{i}|\geq \sum_{i} |\gamma_{A_{i-1}A_i}|\geq |\gamma_{AB}|. \qedhere
\]
\end{proof}

\begin{lemma}\label{lem: homotoping a tripod} Let $W^*$ be a net or a necklace and let $A_1, A_2,A_3$ be vertices on a boundary path $\alpha$ of $W^*$. Let $m$ be the minimum of 
\[
d(A_1,X)+d(A_2,X)+d(A_3,X)
\]
as $X$ varies over all vertices of $W^*$. Then there exists a vertex $Y$ on $\alpha$ such that $d(A_1,Y)+d(A_2,Y)+d(A_3,Y)=m$. 
\end{lemma}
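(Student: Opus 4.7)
The plan is to reduce the minimization problem on $W^*$ to the corresponding problem on the ambient graph $\Gamma$, and then invoke Lemma \ref{Ylemma.trop}. First, we show that a minimizer $X^*$ may be assumed to lie in the \emph{core} $G$ of $W^*$ (the mesh of a net, or the pendant of a necklace) together with $\alpha$. Indeed, if $X^*$ sits on a string $\beta$ of a net that is not part of $\alpha$, let $C$ be the vertex where $\beta$ attaches to the mesh. Every path from $A_i\in\alpha$ to $X^*$ is forced through $C$, so
\[
d(A_1,X^*)+d(A_2,X^*)+d(A_3,X^*)=\sum_{i=1}^3 d(A_i,C)+3\,d(C,X^*).
\]
Replacing $X^*$ by $C$ strictly decreases the sum unless $X^*=C\in G$, which forces the minimizer into $G\cup\alpha$. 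If $X^*\in\alpha$ we are done, so from now on assume $X^*\in G\setminus\alpha$.

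Next, following the proof of Lemma \ref{lem: homotoping a path}, we realize $G$ as an orientation-preserving subgraph of $\Gamma$, placing the mesh in the second (or fourth) quadrant and the pendant in the first (or third) quadrant as in Figure \ref{G.Gamma.2}, with $\alpha\cap \partial G$ sitting along the coordinate axes. By the same staircase argument used in Lemma \ref{lem: homotoping a path}, for any vertex $A\in\alpha\cap\partial G$ and any $X\in G$ the $\Gamma$-geodesic from $A$ to $X$ can be chosen to stay inside $G$. In particular $d_{W^*}(A_i,X)=d_\Gamma(A_i,X)$ for every $X\in G$, so minimizing $\sum_i d_{W^*}(A_i,X)$ over $X\in G$ is equivalent to minimizing $\sum_i d_\Gamma(A_i,X)$ over $X\in G\subseteq\Gamma$.

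Finally, we apply Lemma \ref{Ylemma.trop}: after relabeling $(A_1,A_2,A_3)$ as $(A,B,C)$ in the unique order dictated by their $\Gamma$-coordinates (so that the region $\Omega$ becomes nonempty), the minimum of $\sum_i d_\Gamma(A_i,X)$ over all of $\Gamma$ equals the explicit quantity given in the lemma and is attained exactly on $\Omega$. Because the three points $A_i$ lie on the coordinate axes forming $\alpha\cap \partial G$, a direct case analysis on which axis-segment of $\partial G$ contains each $A_i$ shows that $\Omega \cap \alpha \cap G$ is nonempty, and any vertex $Y$ in this intersection realizes the $\Gamma$-minimum. Combined with the identification of distances above, $Y\in\alpha$ is then a minimizer in $W^*$, proving the claim.

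The main obstacle is the final case analysis: when $\alpha$ bends across several sides (for a net, two strings and one mesh-side; for a necklace, two strings and two pendant-sides), one must verify in each configuration of the $A_i$ along $\alpha$ that the explicit region $\Omega$ of Lemma \ref{Ylemma.trop} indeed meets the boundary portion where the $A_i$ live. I expect this to be handled uniformly by exploiting the translational symmetry of $\Gamma$ together with the explicit inequalities defining $\Omega$, reducing the analysis to a small number of essentially distinct configurations (colinear on one axis, split across two adjacent axes, or spread across all three axis-pieces of $\partial G$).
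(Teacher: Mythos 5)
Your overall strategy---push the minimizer into the mesh/pendant, embed that core into $\Gamma$, and invoke Lemma \ref{Ylemma.trop}---is the same as the paper's, but the step you defer as ``the main obstacle'' is precisely where the paper does its work, and as written there is a genuine gap there. Lemma \ref{Ylemma.trop} is not symmetric in $A,B,C$: the lower bound used for $d(A,X)$ is the first branch of Lemma \ref{adnijn}, for $d(B,X)$ the second, and for $d(C,X)$ the third, so the bound is attained only when $X$ lies in the correct relative position to each \emph{labelled} point. Asserting that one can relabel ``in the unique order dictated by their $\Gamma$-coordinates (so that $\Omega$ becomes nonempty)'' presupposes both that such an assignment of roles exists and that the resulting $\Omega$ meets $\alpha$; neither is automatic for three points scattered along a boundary path that bends across several sides. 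The paper resolves this by reducing to exactly three explicit configurations (all three points on one axis, with $c\leq a\leq b$; a $1$--$2$ split across the two positive axes; a $2$--$1$ split across the two negative axes) and checking the inequalities in each, using Lemma \ref{distance.asc} for the collinear case. Without that enumeration your argument is a plan rather than a proof.

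Two further points need attention. First, your claim $d_{W^*}(A_i,X)=d_\Gamma(A_i,X)$ for $X$ in the core requires that $\Gamma$-geodesics between points of the core can be chosen inside the core (a convexity statement about the triangular or parallelogram region in $\Gamma$) which you do not prove; the paper avoids needing it by bounding arbitrary paths from below via $|\gamma_i|\geq d_\Gamma(A_i,X)$ and then exhibiting explicit straight-path witnesses along $\alpha$, which are geodesics by Lemma \ref{lem: homotoping a path}. Second, you implicitly place each $A_i$ on $\partial G$, but $A_i$ may lie on a string portion of $\alpha$, and a competing path $\gamma_i$ may leave and re-enter $\alpha$ several times; the paper handles this by splitting each $\gamma_i$ at its last visit $B_i$ to $\alpha$ and concluding with the triangle inequality. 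Both issues are repairable, but they are part of the proof, not omitted details.
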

\begin{proof} 
For $i\in\{1,2,3\}$, let $\gamma_{i}$ be a path from $A_i$ to $X$. It suffices to show that there exists a vertex $Y$ on $\alpha$ such that 
\begin{equation}
\label{wcndo}
d(A_1,Y)+d(A_2,Y)+d(A_3,Y) \leq |\gamma_{1}|+|\gamma_{2}|+|\gamma_{3}|.
\end{equation}

If $X$ is on $\alpha$, then \eqref{wcndo} follows directly.

If $W^*$ is a net and $X$ is on the string $\beta$ that is not part of $\alpha$, then we may replace $X$ by the vertex $C$ where $\beta$ joints the mesh, and this replacement shortens all three paths $\gamma_{i}$. Thus, without loss of generality, we assume that $X$ is either contained in the mesh (when $W^*$ is a net) or in the pendant (when $W^*$ is a necklace). 

For each path $\gamma_i$, let us assume that $A_i$ is the only vertex contained in $\alpha$. In this case, the paths  $\gamma_{1}$, $\gamma_{2}$, and $\gamma_{3}$ are all contained in the mesh (when $W^*$ is a net) or in the pendant (when $W^*$ is a necklace). As in Figure \ref{G.Gamma.2}, we may realize them as subgraphs on $\Gamma$ such that the vertices $A_1$, $A_2$, and $A_3$ are on the axes of $\mathbb{Z}^2$. It is then reduced into the following three cases
\begin{itemize}
\item $A_1=(a,0)$, $A_2=(b,0)$, $A_3=(c,0)$, where $c\leq a\leq b$;
\item $A_1=(0,a)$, $A_2=(b,0)$, $A_3=(0,c)$, where $b\geq 0$, and $c\geq a\geq 0$;
\item $A_1=(0,a)$, $A_2=(0,b)$, $A_3=(c,0)$, where $b\leq a\leq 0$, and $c\leq 0$. 
\end{itemize}
Then the Lemma follows directly from Lemmas \ref{distance.asc} and \ref{Ylemma.trop}.

Now for a general path $\gamma_{i}$, let $B_i$ be the last vertex on $\gamma_{i}$ such that $B_i\in \alpha$. It splits $\gamma_{i}$ into two paths: the path $\gamma_{i}'$ from $A_i$ to $B_i$, and the path $\gamma_{i}''$ from $B_i$ to $X$. Then 
\begin{equation}
\label{l11}
|\gamma_{i}|=|\gamma_{i}'|+|\gamma_{i}''|.
\end{equation} 
By the above discussion, there exists a $Y\in \alpha$, such that
\begin{equation}
\label{l12}
 d(B_1,Y)+d(B_2, Y)+d(B_3,Y) \leq |\gamma_{1}''|+|\gamma_{2}''|+|\gamma_{3}''|.
\end{equation}
Meanwhile
\begin{equation}
\label{l13}
d(A_i,Y)\leq |\gamma_{i}'|+d(B_i,Y).
\end{equation}
Combining \eqref{l11}-\eqref{l13}, we obtain the inequality \eqref{wcndo}.
\end{proof}

\subsection{From webs to hives {}}
We first prove Theorem \ref{Int.hive} when $\hat{S}=\Delta$ is a triangle. 

\begin{lemma} 
\label{dwndij}
Let $W\in \mathscr{W}_\Delta^{\mathcal{A}}$ be a reduced web parametrized by $(x,y,z,t,u,v,w)\in \mathbb{Z}\times \mathbb{N}^6$ as in Example \ref{tri-redu-web}. Let $(a_1,\ldots,a_7)$ be the coordinates of $i_\Delta(W)$ associated with the vertices of $Q_\Delta$ as in Figure \ref{quiver.delta}. Then 
\begin{align*}
a_1&=\frac{2t+u+2w+v+\max\{2x,-x\}}{3}, & a_2&=\frac{2w+v+2z+y+\max\{x, -2x\}}{3},\\
a_3&=\frac{2v+w+2u+t+\max\{x,-2x\}}{3}, & a_4&=\frac{2v+w+2t+u+2z+y+\max\{3x,-3x\}}{3},\\
a_5&=\frac{2v+w+2y+z+\max\{2x,-x\}}{3}, & a_6&=\frac{2z+y+2u+t+\max\{2x,-x\}}{3},\\
a_7&=\frac{2t+u+2y+z+\max\{x,-2x\}}{3}. & &
\end{align*}
\end{lemma}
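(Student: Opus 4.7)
The plan is to compute each coordinate $a_i = \mathbb{I}([W], [V_i])$ directly by fixing $W$ in the standard form of Example \ref{tri-redu-web} and choosing an optimal representative of the homotopy class $[V_i]$. Since the intersection number only depends on relative position, and since the standard form of $W$ decomposes into disjoint pieces (the central honeycomb and the six families of corner arcs), the intersection number is additive in these pieces. Thus each formula reduces to separately accounting for the contribution of the honeycomb and the contribution of each corner arc family.

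For the six boundary vertices $i\in\{1,2,3,5,6,7\}$, the $\mathcal{X}$-web $V_i$ is an oriented arc homotopic to one side of $\Delta$, with the two halves of that side carrying opposite orientations. We push the representative of $V_i$ just inside its corresponding side. The only components of $W$ that can intersect such a representative are the $|x|$ parallel strings of the honeycomb crossing that side together with the corner arc families near the two adjacent marked points. When $\mathrm{sgn}(x)$ makes the honeycomb strings cross $V_i$ from its right to its left, each contributes $\tfrac{2}{3}$; otherwise each contributes $\tfrac{1}{3}$. This produces the terms $\max\{2x,-x\}/3$ for the ``upper'' half of an edge and $\max\{x,-2x\}/3$ for the ``lower'' half, since flipping the orientation of $V_i$ swaps $\tfrac{1}{3}\leftrightarrow\tfrac{2}{3}$. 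Each corner arc family similarly contributes a multiple of $\tfrac{1}{3}$ or $\tfrac{2}{3}$ according to whether it points with or against $V_i$, giving rise to the linear combinations such as $2t+u+2w+v$ in the stated formulas.

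For the central vertex $i=4$, the web $V_4$ is the inward tripod at the center of $\Delta$. Its three endpoints sit at the three marked points of $\Delta$, which correspond to three distinguished vertices on the boundary path of the dual surfacoid $W^*$ (which is a net in this case). By Lemma \ref{lem: homotoping a tripod}, the minimum of the sum of distances from these three vertices to a variable vertex $X$ of $W^*$ is realized on the boundary path itself, and this minimum equals $\mathbb{I}([W],[V_4])$ once we identify the trivalent center of $V_4$ with the optimal $X$. The resulting optimal configuration has each of the three legs crossing $W$ so that the honeycomb contributes the combined factor $|x|=\max\{3x,-3x\}/3$ (independent of $\mathrm{sgn}(x)$, as the three legs distribute the crossings evenly among the three orientations), while the six corner arc families contribute the full sum $2v+w+2t+u+2z+y$, yielding the claimed formula for $a_4$.

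The main obstacle is the bookkeeping: correctly identifying which of the parameters $y,z,t,u,v,w$ belongs to which corner and orientation, and consequently which of $\tfrac{1}{3}$ or $\tfrac{2}{3}$ is picked up against each $V_i$. This is best resolved by tabulating six test cases---the reduced web with exactly one of $y,z,t,u,v,w$ equal to $1$ and all other parameters (including $x$) equal to zero---against each of the seven webs $V_i$, using a single reference picture of the triangle with its quiver $Q_\Delta$ as in Figure \ref{quiver.delta}. Once these forty-two entries have been verified, the general formulas follow by additivity, and the two sub-cases $x\geq 0$ and $x\leq 0$ account for both branches of the $\max$ in the honeycomb contribution.
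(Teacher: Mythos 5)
Your overall strategy --- put $W$ in the standard form of Example \ref{tri-redu-web}, choose an optimal representative of each $[V_i]$, and count crossings additively over the honeycomb and the six corner-arc families --- is the same as the paper's, and the bookkeeping you describe does produce the stated formulas. However, there are two places where the justification as written does not go through.

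First, for the six boundary vertices you assert that pushing $V_i$ just inside its side is optimal, but $\mathbb{I}([W],[V_i])$ is a minimum over the whole homotopy class, so you must rule out representatives that wander through the honeycomb or around the corner arcs; without that, your count is only an upper bound for $a_i$. The paper supplies exactly this via Lemma \ref{lem: homotoping a path}: in the dual surfacoid (a net), the straight path along a boundary path is a geodesic for the intersection metric. Second, and more seriously, your appeal to Lemma \ref{lem: homotoping a tripod} for $a_4$ is a misapplication: that lemma requires the three vertices to lie on a \emph{single} boundary path $\alpha$ of the net, whereas the three endpoints of the inward tripod sit at the three distinct corners of $W^*$, and each boundary path contains only two of them. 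The step that actually pins down the honeycomb contribution $\max\{3x,-3x\}/3=|x|$ is Lemma \ref{Ylemma.trop}: after reducing the trivalent center $X$ to the mesh (moving $X$ off a string onto the vertex where the string joins the mesh can only decrease the sum of distances), one realizes the mesh as a subgraph of $\Gamma$ and computes $\min_X\bigl(d(A',X)+d(B',X)+d(C',X)\bigr)=|x|$ over the region $\Omega$; the string lengths $d(A,A')$, $d(B,B')$, $d(C,C')$ then account for the corner terms $2v+w+2t+u+2z+y$. Your remark that the three legs ``distribute the crossings evenly among the three orientations'' is not a substitute for this computation, which is where the real content of the $a_4$ formula lives.
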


\begin{proof}
Recall that the dual surfacoid $W^*$, which is a net consisting of a mesh with three strings attached. We denote the endpoints of the strings by $A,A',B,B',C,C'$, as in Figure \ref{bracn,ys}.

\begin{figure}[H]
\begin{tikzpicture}
\foreach \i in {0,...,2}
     {
     \foreach \j in {\i,...,2}
     {
     \draw [red, decoration={markings, mark=at position 0.65 with {\arrow{Stealth}}}, postaction=decorate] (-\i*0.5+\j,\i*0.866) -- (-\i*0.5+\j+0.5,\i*0.866+0.866);
     \draw [red, decoration={markings, mark=at position 0.65 with {\arrow{Stealth}}}, postaction=decorate] (-\i*0.5+\j+0.5,\i*0.866+0.866) -- (-\i*0.5+\j+1,\i*0.866);
     \draw [red, decoration={markings, mark=at position 0.65 with {\arrow{Stealth}}}, postaction=decorate] (-\i*0.5+\j+1,\i*0.866) -- (-\i*0.5+\j,\i*0.866);
     }
     }
   \draw [red, decoration={markings, mark=at position 0.65 with {\arrow{Stealth}}}, postaction=decorate]
   (-150:2)--++(30:1);
     \draw [red, decoration={markings, mark=at position 0.65 with {\arrow{Stealth}}}, postaction=decorate]
   (0,0)--++(-150:1);
   \draw [red, decoration={markings, mark=at position 0.65 with {\arrow{Stealth}}}, postaction=decorate]
   (60:3)--++(0,1);
    \draw [red, decoration={markings, mark=at position 0.65 with {\arrow{Stealth}}}, postaction=decorate]
   (1.5,4.6)--(1.5,3.6);
    \draw [red, decoration={markings, mark=at position 0.65 with {\arrow{Stealth}}}, postaction=decorate]
   (3,0)--++(-30:1);
    \draw [red, decoration={markings, mark=at position 0.65 with {\arrow{Stealth}}}, postaction=decorate]
   (3,0)++(-30:2)--++(150:1);
\node[label=above:$A$] at (1.5,4.6) {$\bullet$};
\node at (1.5,3.6) {$\bullet$};
\node[label=right:$A'$] at (60:3) {$\bullet$};
\node[label=left:$B$] at (-150:2) {$\bullet$};
\node at (-150:1) {$\bullet$};
\node at (0,0) {$\bullet$};
\node at (0,0.4) {$B'$};
\node at (3,0) {$\bullet$};
\node at (3,0.4) {$C'$};
\node at (3.866,-.5) {$\bullet$};
\node[label=right:$C$] at (4.732,-1) {$\bullet$};
\end{tikzpicture}
\caption{The dual surfacoid $W^*$ on a triangle}
\label{bracn,ys}
\end{figure}
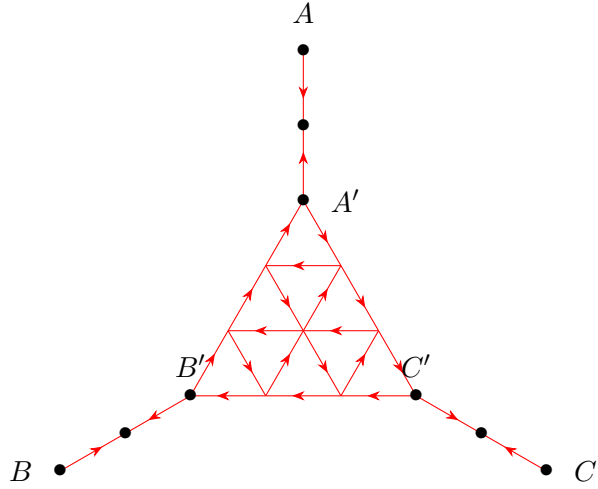

Let $V$ be an inward tripod as in Figure \ref{figure:triA}. Let $X$ be an arbitrary vertex on $W^*$. By definition, we have
\[
a_4=\mathbb{I}([W],[V])=\min_{X \in W^*} \{d(A,X)+d(B,X)+d(C,X)\}.
\]
Note that if $X$ is placed on one of the strings, say $AA'$, then we can move $X$ to the vertex $A'$ without increasing the value of the sum $d(A,X)+d(B,X)+d(C,X)$. Thus, we may assume that $X$ is inside the mesh. On the one hand, by Lemma \eqref{Ylemma.trop}, we get 
\[
d(A',X)+d(B',X)+d(C',X)=|x|.
\]
On the other hand, it is not hard to see that $d(A,X)=d(A,A')+d(A',X)$, $d(B,X)=d(B,B')+d(B',X)$, and $d(C,X)=d(C,A')+d(C',X)$. Therefore
\begin{align*}
a_4=&d(A,A')+d(A',X)+d(B,B')+d(B',X)+d(C,C')+d(C',X)\\
=& \frac{2v+w+2t+u+2z+y+\max\{3x,-3x\}}{3}
\end{align*}

If $V$ is a boundary arc as in Figure \ref{figure:triA}, then by Lemma  \ref{lem: homotoping a path}, the intersection $i(W,V)$ attains the minimum when $V$ is a straight path on $W^*$. In this way, we obtain the values of the rest $a_i$'s as desired.
\end{proof}

\begin{proposition} 
\label{hive=red=tri}
    The map $i_\Delta$ is a bijection from $\mathscr{W}_\Delta^\mathcal{A}$ to ${\bf Hive}(\Delta)$.
\end{proposition}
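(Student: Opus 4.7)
The plan is to use the explicit parametrization $\mathbb{Z}\times\mathbb{N}^6 \xrightarrow{\sim} \mathscr{W}_\Delta^\mathcal{A}$ from Example \ref{tri-redu-web} together with the closed-form expressions in Lemma \ref{dwndij}, and show by direct computation that the composition $\mathbb{Z}\times\mathbb{N}^6 \to (\tfrac{1}{3}\mathbb{Z})^7$ is a bijection onto ${\bf Hive}(\Delta)$.

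First I would verify that $i_\Delta(W) \in {\bf Hive}(\Delta)$ for every reduced web $W$. Splitting into the cases $x\geq 0$ and $x\leq 0$, I would substitute the formulas of Lemma \ref{dwndij} into each of the nine rhombus expressions. A direct and symmetric calculation shows that the three \emph{central} expressions $a_1+a_2-a_4$, $a_3+a_6-a_4$, $a_5+a_7-a_4$ simplify respectively to $w$, $u$, $y$ in both cases, while the six \emph{edge} expressions fall into three pairs $\{a_1+a_4-a_2-a_3,\, a_4+a_7-a_5-a_6\}$, $\{a_3+a_4-a_1-a_6,\, a_4+a_5-a_2-a_7\}$, $\{a_2+a_4-a_1-a_5,\, a_4+a_6-a_3-a_7\}$; each pair recovers one of $t$, $v$, $z$ together with its $|x|$-shifted partner $t+|x|$, $v+|x|$, $z+|x|$, with the assignment of which member of the pair carries the $|x|$ depending on the sign of $x$. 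Since $t, v, z, u, w, y \in \mathbb{N}$, all nine rhombus quantities are non-negative integers.

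Next I would exhibit an explicit inverse. Given $(a_1,\ldots,a_7) \in {\bf Hive}(\Delta)$, define
\begin{align*}
    w &= a_1+a_2-a_4, & u &= a_3+a_6-a_4, & y &= a_5+a_7-a_4, \\
    t &= \min\{a_1+a_4-a_2-a_3,\, a_4+a_7-a_5-a_6\}, \\
    v &= \min\{a_3+a_4-a_1-a_6,\, a_4+a_5-a_2-a_7\}, \\
    z &= \min\{a_2+a_4-a_1-a_5,\, a_4+a_6-a_3-a_7\}, \\
    x &= a_1-a_2-a_3+a_5+a_6-a_7.
\end{align*}
The hive rhombus conditions directly guarantee non-negativity and integrality of $w, u, y, t, v, z$. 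A short algebraic check confirms that the three pairwise differences within the edge-rhombus pairs all simplify to $a_1-a_2-a_3+a_5+a_6-a_7$, so the value of $x$ is unambiguous (independent of which pair is used). Plugging $(x, y, z, t, u, v, w)$ back into the formulas of Lemma \ref{dwndij} then recovers the original tuple $(a_1, \ldots, a_7)$, establishing injectivity and surjectivity at once.

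The main technical obstacle is the case analysis on the sign of $x$: the formulas of Lemma \ref{dwndij} are piecewise linear in $x$ with a kink at $x=0$, so both branches require separate verification. However, the $\mathbb{Z}/3$-rotational symmetry of $\Delta$ (cyclically permuting $(y,u,z) \leftrightarrow (w,v,t)$ together with $(a_1,a_3,a_5) \leftrightarrow (a_7,a_2,a_6)$) reduces each branch to a single representative computation.
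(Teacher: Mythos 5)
Your proposal is correct and follows essentially the same route as the paper: both verify the nine rhombus conditions directly from the formulas of Lemma \ref{dwndij} (obtaining $w,u,y$ from the central rhombi and $\{v,v+|x|\}$, $\{t,t+|x|\}$, $\{z,z+|x|\}$ from the edge pairs) and then write down the identical explicit inverse, with $x=a_1+a_5+a_6-a_2-a_3-a_7$ and $t,v,z$ given by the same minima. Your additional observations — that the difference within each edge pair is always $x$, so the inverse is unambiguous, and that the rotational symmetry reduces the sign-of-$x$ case analysis — are consistent with and slightly more detailed than the paper's computation.
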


\begin{proof}
First, we show that the data $i_\Delta(W)$ satisfies the rhombus condition. Indeed, by Lemma \ref{dwndij}, 
\[
a_1+a_2-a_4=w; \qquad a_3+a_4-a_1-a_6=v+\max\{0,-x\};\qquad a_4+a_5-a_2-a_7=v+\max\{x,0\}
\]
are all non-negative integers. The same hold for the rest six values. 

Conversely, let $(a_1,\ldots, a_7)$ be a hive, then we get $(x,y,z,u,v,w,t)\in \mathbb{Z}\times \mathbb{N}^6$, where
\begin{align*}
x&= a_1+a_5+a_6-a_2-a_3-a_7,\\
w&=a_1+a_2-a_4, \quad \quad u=a_3+a_6-a_4, \quad \quad y=a_5+a_7-a_4, \\
v&=\min\{a_3+a_4-a_6-a_1, ~a_4+a_5-a_2-a_7\},\\
t&=\min\{a_1+a_4-a_3-a_2,~a_4+a_7-a_6-a_5\},\\
z&=\min\{a_2+a_4-a_1-a_5,~ a_4+a_6-a_3-a_7\}.
\end{align*}
Therefore, the map $i_\Delta$ is a bijection.
\end{proof}

Now let $\hat{S}$ be an arbitrary decorated surface.
Fix a reduced web $W\in \mathscr{W}_{\hat{S}}^{\mathcal{A}}$. 
Let $\gamma$ be an oriented path on $\hat{S}$ that connects two components of $\hat{S}\backslash W$. 
We assume that $\gamma$ intersects with $W$ transversely and consider the intersection number $i(W, \gamma)$ via \eqref{definition:int}. Note that $\gamma$ corresponds to a path on the dual surfacoid $W^*$, and $i(W,\gamma)$ is equal to the intersection metric length of that path. Abusing notation, we define the length of $\gamma$ as 
\[|\gamma|:= i(W,\gamma).\]

Let $\mathcal{T}_2$ be a split ideal triangulation of $\hat{S}$. Following Propositions \ref{proposition:gp} and \ref{prop: refinement}, 
we can split the bigons in $\mathcal{T}_2$ into smaller bigons such that the restriction of the dual surfacoids $W^*$ to each smaller bigon is a necklace, and to each triangle is a net. Denote by resulted decomposition of $\hat{S}$ by ${\mathcal{T}}_2'$. After necessarily perturbing the  edges of ${\mathcal{T}}_2'$, we assume that 
\begin{itemize}
\item the path $\gamma$ intersects transitively with the  edges of ${\mathcal{T}}_2'$;
\item the endpoints of $\gamma$ are not on the edges of ${\mathcal{T}}_2'$. 
\end{itemize}
 We define the \emph{crossing sequence} of $\gamma$ to be the sequence of edges $(\alpha_1,\dots, \alpha_n)$ of ${\mathcal{T}}_2'$ that $\gamma$ crosses. Let $(B_1, \ldots, B_n)$ be the corresponding intersecting points of $\gamma$ with the edges $\alpha_i$'s. They divide $\gamma$ into paths $\gamma_0$, ..., $\gamma_n$.
When $\alpha_i=\alpha_{i+1}=\alpha$, let us replace the subpath $\gamma_i$ from $B_{i}$ and $B_{i+1}$ by the path along $\alpha$, and then move it slightly to the other side of $\alpha$. In this way, we obtain a new path $\gamma'$ as in Figure \ref{c}. Note that $\gamma'$ is homotopy equivalent to $\gamma$.

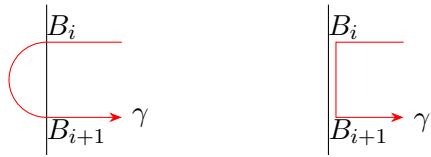
\begin{figure}[H]
    \centering
    \begin{tikzpicture}
    \draw (0,0) --  (0,2);
        \draw [red, -Stealth] (1,1.5) -- (0,1.5) arc (90:270:0.5) -- (1,0.5) node [right, black] {$\gamma$};
        \node at (0.2,1.7) {$B_i$};
        \node at (0.4,0.3) {$B_{i+1}$};
        \end{tikzpicture}\hspace{2cm}
    \begin{tikzpicture}
    \draw (0,0) -- (0,2);
        \node at (0.2,1.7) {$B_i$};
        \node at (0.4,0.3) {$B_{i+1}$};
        \draw [red, -Stealth] (1,1.5) -- (0.1,1.5) -- (0.1, 0.5) -- (1,0.5) node [right, black] {$\gamma'$};
        \end{tikzpicture}
        
    \caption{Local modification of $\gamma$.}
    \label{c}
\end{figure}

\begin{lemma}\label{lem: shorter representative} We have $|\gamma'|\leq |\gamma|$.
\end{lemma}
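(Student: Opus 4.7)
The plan is to reduce $|\gamma'|\le |\gamma|$ to a local comparison inside the two regions $R_1, R_2$ of $\mathcal{T}_2'$ adjacent to $\alpha$, and then to apply Lemma~\ref{lem: homotoping a path}. Since $\gamma$ and $\gamma'$ coincide outside a small neighborhood of the loop from $B_i$ to $B_{i+1}$, and the small connecting segments in $R_2$ near the corners of $\gamma'$ can be chosen to contain no crossings with $W$, it suffices to prove $i(W,\gamma_i) \ge i(W,\gamma_i')$, where $\gamma_i$ is the original subpath of $\gamma$ between $B_i$ and $B_{i+1}$ (which lies in $R_1$) and $\gamma_i'$ is the along-$\alpha$ replacement (which lies in $R_2$).

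By the construction of $\mathcal{T}_2'$, the restriction $W^*|_{R_1}$ is a net or a necklace. The subpath $\gamma_i$ corresponds to a path in $W^*|_{R_1}$ between two dual vertices $v_1, v_2$ lying on the boundary path induced by $\alpha$, so Lemma~\ref{lem: homotoping a path} gives $|\gamma_i|\ge |s_1|$, where $s_1$ is the straight path from $v_1$ to $v_2$ along $\alpha$; geometrically, $s_1$ runs parallel to $\alpha$ just inside $R_1$. By construction, $\gamma_i'$ is itself the analogous straight path $s_2$ along $\alpha$ on the $R_2$ side of $\alpha$, so $|\gamma_i'|=|s_2|$.

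The remaining step is to show $|s_1|=|s_2|$. The paths $s_1$ and $s_2$ are parallel curves on opposite sides of $\alpha$, both oriented from $B_i$ toward $B_{i+1}$, and each edge of $W$ crossing $\alpha$ between $B_i$ and $B_{i+1}$ contributes exactly one crossing to each of $s_1, s_2$. A local orientation check, using that $s_1$ and $s_2$ traverse $\alpha$ in the same direction, shows that at the paired crossings the $W$-edge lies on the same side (left or right) of both $s_1$ and $s_2$; hence the $\epsilon$-weights in \eqref{definition:int} agree. Summing over all such crossings yields $|s_1|=|s_2|$, and thus $|\gamma_i'|=|s_2|=|s_1|\le |\gamma_i|$, proving $|\gamma'|\le |\gamma|$. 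The main obstacle is this last orientation check, which is where the asymmetric weights $\tfrac{1}{3}$ and $\tfrac{2}{3}$ enter essentially: one must verify that reflecting across $\alpha$ preserves the ``right versus left'' classification of each transverse crossing, given that $s_1$ and $s_2$ are co-oriented along $\alpha$.
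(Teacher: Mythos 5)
Your argument is correct and matches the paper's proof, which simply observes that the inequality is a direct consequence of Lemma \ref{lem: homotoping a path}: you have spelled out the implicit details (restricting to the net or necklace in the region adjacent to $\alpha$, applying the geodesic property of the straight path, and noting that the two co-oriented parallel copies of $\alpha$ trace the same combinatorial path in $W^*$ and hence have equal length). No gap.
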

\begin{proof} It is a direct consequence of  Lemma \ref{lem: homotoping a path}.
\end{proof}

\begin{remark}\label{rmk: shortest representative} As a consequence of Lemma \ref{lem: shorter representative}, when we try to minimize the intersection metric length of a path $\gamma$, we may assume without loss of generality that the crossing sequence of $\gamma$ does not have two identical ideal edges next to each other.
\end{remark}

\begin{lemma}\label{lem: representative of ideal arc} 
Let $W$ and ${\mathcal{T}}_2$ be as above. Let $\alpha$ be an oriented ideal edge of $\mathcal{T}_2$ and let $\gamma$ be an arbitrary path that is homotopic to $\alpha$. Then
$|\alpha| \leq |\gamma|.$
\end{lemma}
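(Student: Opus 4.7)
The plan is to reduce $\gamma$ to a representative that lies in a small neighborhood $\mathcal{N}$ of $\alpha$ in $\hat{S}$, and then compare with $\alpha$ piece by piece using Lemma \ref{lem: homotoping a path}. The broad strategy mirrors that of Lemmas \ref{lem: homotoping a path} and \ref{lem: homotoping a tripod}, where straight paths along a boundary path realize the geodesic; here we glue these local geodesy statements across the regions bordering $\alpha$.

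First, by Remark \ref{rmk: shortest representative}, I would replace $\gamma$ by a path in the same homotopy class whose crossing sequence with the edges of $\mathcal{T}_2'$ contains no two identical consecutive entries, without increasing $|\gamma|$. The edge $\alpha$ itself, when viewed in the refined triangulation $\mathcal{T}_2'$, decomposes as a concatenation $\alpha_1\cdots\alpha_k$. Each $\alpha_i$ is a shared side between the ideal triangle $T$ of $\mathcal{T}_2$ on one side of $\alpha$ (whose dual surfacoid $W^*|_T$ is a net) and a smaller bigon $B_i$ of $\mathcal{T}_2'$ on the other side (whose dual surfacoid $W^*|_{B_i}$ is a necklace); if $\alpha\subset \partial\hat{S}$, then the bigon side is absent. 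Set $\mathcal{N}:=T\cup B_1\cup\cdots\cup B_k$.

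Next, the core step is to homotope $\gamma$ to a representative $\gamma'\subset\mathcal{N}$ with $|\gamma'|\leq|\gamma|$. Whenever $\gamma$ enters a region $R$ of $\mathcal{T}_2'$ outside of $\{T, B_1, \dots, B_k\}$, I would identify a maximal subpath of $\gamma$ lying in $R$ and replace it by a path along $\partial R$ using the geodesy statement of Lemma \ref{lem: homotoping a path}, together with a case analysis on whether its entry and exit points belong to the same boundary path of the net/necklace $W^*|_R$ or to different ones. The replacement moves the subpath off $R$ into the interior of an adjacent region, and the resulting total length does not exceed the original. Iterating this procedure eliminates all visits to regions outside $\mathcal{N}$.

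Finally, with $\gamma'\subset \mathcal{N}$, I would decompose $\gamma'$ into subpaths lying in $T$ and in each $B_i$, and compare piece by piece with $\alpha$. In each region $R\in\{T, B_1,\dots, B_k\}$ the restriction $\alpha\cap R$ is a straight path along a boundary path of the net/necklace $W^*|_R$, while $\gamma'\cap R$ is a union of subpaths with endpoints on $\partial R$. Lemma \ref{lem: homotoping a path} applied to each subpath, combined with the additivity of intersection numbers along the regions, yields $|\gamma'|\geq |\alpha|$, whence $|\gamma|\geq |\alpha|$. The main obstacle I anticipate is the core step above: the existing Lemma \ref{lem: homotoping a path} only covers geodesy between two vertices on the same boundary path of a net or necklace, whereas a general subpath of $\gamma$ in a region $R$ may enter and exit through distinct boundary paths. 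Handling this general case likely requires exploiting the planar rhombus-tiled structure of $W^*|_R$ via an analogue of the tripod argument used in the proof of Lemma \ref{lem: homotoping a tripod}, or devising a more refined surgery that invokes the homotopy class of $\gamma$ to justify the extra length incurred in traversing $\partial R$.
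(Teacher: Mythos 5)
There is a genuine gap at what you yourself flag as the ``core step''. Your plan is to push $\gamma$ into a neighborhood $\mathcal{N}$ of $\alpha$ by repeatedly replacing each maximal subpath lying in a region $R$ outside $\mathcal{N}$ with a detour along $\partial R$. But Lemma \ref{lem: homotoping a path} only gives geodesy between two vertices on the \emph{same} boundary path of a net or necklace; when a subpath of $\gamma$ enters $R$ through one side and exits through a different side (which is exactly what happens for a path that genuinely wanders away from $\alpha$), there is no reason the boundary detour is no longer than the transversal crossing --- in general it is strictly longer, since cutting across the mesh can be cheaper than going around a corner of it. So the length-non-increasing surgery you need does not exist in this form, and no amount of case analysis on entry/exit boundary paths will save it. The final ``piece by piece'' comparison also quietly assumes that the subpaths of $\gamma'$ in each region line up with the corresponding pieces of $\alpha$, which would itself need an argument.

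The paper's proof avoids local surgery entirely and is worth contrasting with your plan. After using Remark \ref{rmk: shortest representative} to assume the crossing sequence of $\gamma$ with $\mathcal{T}_2'$ has no two identical consecutive entries (this part you have), one perturbs $\alpha$ into the adjacent ideal triangle and lifts everything to the universal cover $\tilde{S}$. There the adjacency structure of the lifted triangles and bigons is a tree, so a lifted path whose reduced crossing sequence is \emph{nonempty} passes through pairwise distinct regions and can never return to the triangle it started in --- contradicting the fact that $\gamma$ is homotopic to $\alpha$ rel endpoints. Hence the crossing sequence is empty, $\tilde{\gamma}'$ stays inside a single triangle, and a single application of Lemma \ref{lem: homotoping a path} (both endpoints now lie on one boundary path of one net) finishes the proof. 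The global topological input --- emptiness of the reduced crossing sequence, forced by the universal cover --- is precisely the ingredient your region-by-region surgery is missing.
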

\begin{proof} Let $\Delta$ be the ideal triangle adjacent to $\alpha$. Let us perturb the endpoints of $\gamma$ and $\alpha$ slightly, obtaining two paths $\gamma'$ and $\alpha'$, such that 
\begin{itemize}
\item $|\alpha|=|\alpha'|$ and $|\gamma|=|\gamma'|$,
\item $\alpha'$ is contained inside $\Delta$, 
\item $\gamma'$ is homotopic to $\alpha'$.
\end{itemize}
Let $\tilde{S}$ be the universal cover of $\hat{S}$.  We consider the further decomposition $\mathcal{T}'_2$ as above.
Let us lift  $\mathcal{T}'_2$ to a decomposition $\tilde{\mathcal{T}}_2'$ of $\tilde{S}$, and lift $\alpha'$ and $\gamma'$ to one of their representatives, denoted by $\tilde{\alpha}'$ and $\tilde{\gamma}'$ respectively.

Let $(\alpha_1, \alpha_2,\cdots, \alpha_n)$ be the crossing sequence of $\tilde{\gamma}'$ with $\tilde{\mathcal{T}}_2'$. According to Remark \ref{rmk: shortest representative}, we may assume that  $\alpha_i\neq \alpha_{i+1}$ for any $i$.  If the crossing sequence of $\tilde{\gamma}'$ is not empty, then $\tilde{\gamma}'$ will cross a sequence of distinct triangles and bigons in $\tilde{\mathcal{T}}_2'$ (Figure \ref{fig: crossing sequence}). In particular, it will never return back to the ideal triangle that it starts with, which contradicts with the assumption that $\alpha'$ and $\gamma'$ are isotopic. Thus, the crossing sequence of $\tilde{\gamma}'$ must be empty, i.e., $\tilde{\gamma}'$ stays within the same ideal triangle as $\alpha'$ resides. By Lemma \ref{lem: homotoping a path}, we have 
\[
|\alpha|=|\alpha'|=|\tilde{\alpha}'|\leq |\tilde{\gamma}'|=|\gamma'|=|\gamma|.
\]
This proves our lemma.
\end{proof}

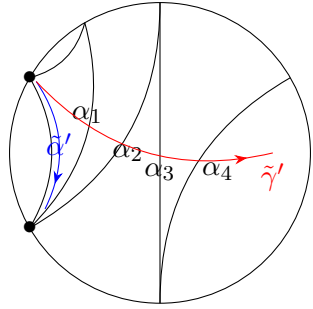
\begin{figure}[H]
    \centering
    \begin{tikzpicture}
        \draw (0,0) circle [radius=2];
        \node at (-150:2) [] {$\bullet$};
        \node at (150:2) [] {$\bullet$};
        \draw (-150:2) to [bend right]  (150:2);
        \draw [blue, decoration={markings, mark=at position 0.8 with {\arrow{Stealth}}}, postaction=decorate] (150:1.9) to [bend left] node [] {$\tilde{\alpha}'$} (-154:1.7);
        \draw (-150:2) to [bend right] node [above] {$\alpha_1$} (120:2);
        \draw (150:2) to [bend right]  (120:2);
        \draw (-150:2) to [bend right] node [below] {$\alpha_2$} (90:2);
        \draw (90:2) -- node[below] {$\alpha_3$} (-90:2);
        \draw (-90:2) to [bend left] node[right] {$\alpha_4$} (30:2);
        \draw [red,decoration={markings, mark=at position 0.9 with {\arrow{Stealth}}}, postaction=decorate] (150:1.9) to [bend right] (1.5,0) node [below] {$\tilde{\gamma}'$};
    \end{tikzpicture}
    \caption{A lift of $\gamma'$ with a non-empty crossing sequence to the universal cover $\tilde{S}$.}
    \label{fig: crossing sequence}
\end{figure}

\begin{remark} Lemma \ref{lem: representative of ideal arc} has been proved by Kuperburg \cite[Lemma 6.5]{K96} using a discrete version of the Gauss-Bonnet theorem. The advantage of our strategy is that it could be used to prove the following key lemma dealing with the tripod.
\end{remark}

Let $W$ and $\mathcal{T}_2$ be as above. Let $\Delta$ be an ideal triangle of $\mathcal{T}_2$ with vertices $a,b,c$. Let $Y$ be an inward tripod going from $a,b,c$ to the center of $\Delta$, as depicted in Figure \ref{figure:triA}(8). 

\begin{lemma}\label{lem: minimal intersection for tripod}  Let $V$ be a reduced $\mathcal{X}$ web that is homotopic to $Y$. The intersection number $i(W,V)$ attains the minimum when $V$ is contained in the triangle $\Delta$.
\end{lemma}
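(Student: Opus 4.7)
The plan is to lift to the universal cover and induct on a crossing count, following the strategy of Lemma~\ref{lem: representative of ideal arc} but applying Lemma~\ref{lem: homotoping a tripod} in place of Lemma~\ref{lem: homotoping a path} at the key step. I would fix a lift $\tilde{\Delta}$ of $\Delta$ in $\tilde{S}$ with vertices $\tilde{a},\tilde{b},\tilde{c}$, lift $V$ to the unique tripod $\tilde{V}$ in $\tilde{S}$ with endpoints at these vertices, and observe that $i(W,V)=i(\tilde{W},\tilde{V})$. After perturbation, $\tilde{V}$ may be assumed transverse to $\tilde{\mathcal{T}}_2'$; by tightening each leg within each region via Lemma~\ref{lem: homotoping a path} and invoking Remark~\ref{rmk: shortest representative}, no leg has two consecutive identical edges in its crossing sequence. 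It then suffices to show that the minimum of $i(\tilde{W},\tilde{V}')$ over tripods $\tilde{V}'$ with endpoints $\tilde{a},\tilde{b},\tilde{c}$ is attained by one contained in $\tilde{\Delta}$.

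Next I would induct on the total number $N$ of crossings of the three legs of $\tilde{V}$ with edges of $\tilde{\mathcal{T}}_2'$ lying outside $\tilde{\Delta}$. When $N=0$, each leg stays in the region of $\tilde{\mathcal{T}}_2'$ adjacent to its endpoint inside $\tilde{\Delta}$, so $\tilde{V}\subset \tilde{\Delta}$, which is the base case. For the inductive step, let $R$ be the region containing the center $\tilde{x}$. If $\tilde{x}\in \tilde{\Delta}$, some leg makes a round-trip excursion outside $\tilde{\Delta}$; lifting the argument of Lemma~\ref{lem: representative of ideal arc} to the universal cover allows the excursion to be replaced by a straight path inside $\tilde{\Delta}$, strictly reducing $N$ without increasing length. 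If $\tilde{x}\notin \tilde{\Delta}$, the three legs enter $R$ at points $p_a,p_b,p_c\in\partial R$. Whenever $p_a,p_b,p_c$ lie on a single boundary path of $R$, Lemma~\ref{lem: homotoping a tripod} produces a new center on that boundary path with no larger total distance to $p_a,p_b,p_c$; pushing this center across the corresponding edge of $\partial R$ toward $\tilde{\Delta}$ reduces $N$. In the remaining case, where $p_a,p_b,p_c$ are distributed across more than one boundary path of $R$, the simply-connectedness of $\tilde{S}$ together with Lemma~\ref{lem: homotoping a path} can be used to reroute one of the legs through $\tilde{\Delta}$ or an intermediate region so as to reduce to the single-boundary-path case.

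The main obstacle is this last situation, which requires a careful combinatorial analysis of how the three legs can be distributed across the boundary paths of $R$; one must verify that the rerouting operations provided by Lemmas~\ref{lem: homotoping a path} and~\ref{lem: homotoping a tripod} always succeed in strictly reducing $N$ without increasing intersection metric length. Once this is settled, iterating the induction drives $N$ to zero, placing $\tilde{V}$ inside $\tilde{\Delta}$, and projecting back down to $\hat{S}$ yields the desired minimizer contained in $\Delta$.
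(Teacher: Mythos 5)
Your skeleton matches the paper's: lift to the universal cover, reduce the legs' crossing sequences via Remark~\ref{rmk: shortest representative}, and peel off one region at a time using Lemma~\ref{lem: homotoping a tripod} followed by pushing the center back across an arc. However, the case you flag as ``the main obstacle'' --- the entry points $p_a,p_b,p_c$ landing on different boundary paths of the region $R$ containing the center --- is left unresolved, and no rerouting argument is supplied. This is a genuine gap, and it happens to be exactly the point where the whole proof lives or dies: without ruling out that configuration, Lemma~\ref{lem: homotoping a tripod} (which requires all three points to lie on a \emph{single} boundary path) cannot be invoked, and the induction does not close.

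The missing idea is that this bad case never occurs. In the universal cover, the arcs of $\tilde{\mathcal{T}}_2'$ are pairwise disjoint and each separates the disk, so a transverse path whose crossing sequence has no two identical consecutive entries crosses exactly the arcs separating its initial region from its terminal region, in the forced order (this is the same separation argument used in Lemma~\ref{lem: representative of ideal arc} to show the crossing sequence of $\tilde{\gamma}'$ is empty). Since all three legs run from $\tilde{\Delta}$ to the \emph{same} center vertex, their reduced crossing sequences are therefore identical; in particular they enter every intermediate region, including the last one, through the same arc, so $p_a,p_b,p_c$ always lie on one boundary path. (A side effect of the same observation: once the sequences are reduced, if the center lies in $\tilde{\Delta}$ then all three crossing sequences are already empty, so your separate ``round-trip excursion'' sub-case also evaporates.) With this observation inserted, your induction on the number of crossings goes through and coincides with the paper's proof.
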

\begin{proof} Let us perturb the three vertices of $V$ such that they are all contained inside the triangle $\Delta$. The web $V$ is homotopic equivalent to the web $Y$ as in the picture. 

\begin{figure}[H]
\centering 
\begin{tikzpicture}
\draw (90:2)--(210:2)--(330:2)--(90:2);
\node at (90:1.6) [] {$\bullet$};
\node at (210:1.6) [] {$\bullet$};
\node at (330:1.6) [] {$\bullet$};
\node[blue] at (30:0.3) {$Y$};
\node[red] at (2.8,1.3) {$V$};
 \draw [red,decoration={markings, mark=at position 0.9 with {\arrow{Stealth}}}, postaction=decorate] (90:1.6) to [bend right]  node [above] {${\gamma}_1$} (3,1.2);
  \draw [red,decoration={markings, mark=at position 0.9 with {\arrow{Stealth}}}, postaction=decorate] (210:1.6) to [bend right]  node [above] {${\gamma}_2$} (3,1.2);
   \draw [red,decoration={markings, mark=at position 0.9 with {\arrow{Stealth}}}, postaction=decorate] (330:1.6) to [bend right]  node [below] {${\gamma}_3$} (3,1.2);
 \draw [blue,decoration={markings, mark=at position 0.9 with {\arrow{Stealth}}}, postaction=decorate] (210:1.6) to  (0,0);
  \draw [blue,decoration={markings, mark=at position 0.9 with {\arrow{Stealth}}}, postaction=decorate] (90:1.6) to  (0,0);
   \draw [blue,decoration={markings, mark=at position 0.9 with {\arrow{Stealth}}}, postaction=decorate] (330:1.6) to  (0,0);
\end{tikzpicture}
\caption{A web $V$ that is homotopic to an inward tripod $Y$.}
\end{figure}
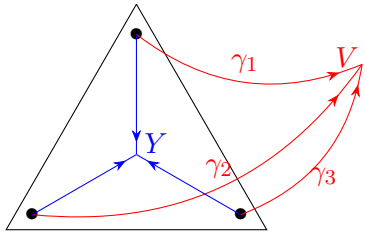

Without loss of generality, let us assume that $\hat{S}$ is a disk. Otherwise, we may lift everything to the universal cover of $\hat{S}$ as in the proof of Lemma \ref{lem: representative of ideal arc}.  Denote the three legs of $V$ by $\gamma_1$, $\gamma_2$, $\gamma_3$ respectively. By Remark \ref{rmk: shortest representative}, we assume that their crossing sequences with the decomposition $\tilde{\mathcal{T}}_2'$ do not have two identical arcs next to each other. Since $\gamma_1$, $\gamma_2$, $\gamma_3$ connects the triangle $\Delta$ to the same vertex, their crossing sequences must be the same. In other words, they cross the same sequences of triangles and bigons. The cross sequence separates $\gamma_i$ into several paths.  As illustrated by Figure \ref{cY}, we  replace the last part of each $\gamma_i$ by a straight path along the ideal arc, and move them slightly back to the other side of the arc. The newly obtained tripod $V'$ is homotopic to $V$. By Lemma \ref{lem: homotoping a tripod}, we can find $V'$ such that 
\[
i(W,V')\leq i(W,V).
\]
Let us repeat the same procedure. Eventually, we obtain a tripod inside the triangle $\Delta$, which concludes the proof of the Lemma. 
\end{proof}
\begin{figure}[H]
    \centering
    \begin{tikzpicture}
    \draw (-0.5,0) --  (-0.5,2);
        \draw [red, decoration={markings, mark=at position 0.4 with {\arrow{Stealth}}}, postaction=decorate] (-2,2) -- (1,1);
        \draw [red, decoration={markings, mark=at position 0.4 with {\arrow{Stealth}}}, postaction=decorate] (-2,1) --  (1,1);
        \draw [red, decoration={markings, mark=at position 0.4 with {\arrow{Stealth}}}, postaction=decorate] (-2,0) --  (1,1);
        \node[red] at (0.5, 1.5) {$V$};
        \end{tikzpicture}\hspace{2cm}
    \begin{tikzpicture}
     \draw (-0.5,0) --  (-0.5,2);
        \draw [red, decoration={markings, mark=at position 0.7 with {\arrow{Stealth}}}, postaction=decorate] (-2,2) -- (-0.6,1.5) -- (-0.6,1.2);
        \draw [red, decoration={markings, mark=at position 0.7 with {\arrow{Stealth}}}, postaction=decorate] (-2,1) -- (-0.7,1)-- (-0.6,1.2);
        \draw [red, decoration={markings, mark=at position 0.8 with {\arrow{Stealth}}}, postaction=decorate] (-2,0) --  (-0.6,0.5) --(-0.6,1.2);
        \node[red] at (-0.7, 2) {$V'$};
        \end{tikzpicture}
        
    \caption{Local modification of $V$.}
    \label{cY}
\end{figure}
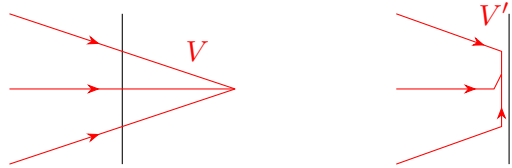

We are now ready to prove the following theorem, which is the first part of Theorem \ref{Int.hive}.

\begin{theorem}
\label{theorem:int}
For any ideal triangulation $\mathcal{T}$ of the marked surface $\hat{S}$, the map $i_\mathcal{T}$ is a bijection from $\mathscr{W}_{\hat{S}}^{\mathcal{A}}$ to ${\bf Hive}(\mathcal{T})$. 
\end{theorem}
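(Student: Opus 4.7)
My approach is to reduce the global claim to the triangle case established in Proposition \ref{hive=red=tri} by using the good-position decomposition of reduced webs (Proposition \ref{proposition:gp}) together with the two minimality lemmas (\ref{lem: representative of ideal arc}, \ref{lem: minimal intersection for tripod}), which localize the computation of every intersection number $\mathbb{I}([W],[V_i])$ into a single ideal triangle of $\mathcal{T}$.

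First I would show that the image lies in ${\bf Hive}(\mathcal{T})$. Place $W \in \mathscr{W}_{\hat{S}}^{\mathcal{A}}$ in good position with respect to $\mathcal{T}_2$, so that its restriction $W|_\Delta$ to each ideal triangle $\Delta$ of $\mathcal{T}$ is a reduced web on $\Delta$ as in Example \ref{tri-redu-web}. For each vertex $i \in \Theta_\mathcal{T}$ contained in $\Delta$, the associated $\mathcal{X}$-web $V_i$ is either an inward tripod inside $\Delta$ or an oriented arc homotopic to a side of $\Delta$. Lemma \ref{lem: minimal intersection for tripod} produces a minimizing tripod representative lying entirely inside $\Delta$, and Lemma \ref{lem: representative of ideal arc} produces a minimizing arc representative along the corresponding ideal edge. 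In either case only $W|_\Delta$ contributes to the intersection, so $\mathbb{I}([W],[V_i]) = i(W|_\Delta, V_i)$. Consequently the restriction of $i_\mathcal{T}(W)$ to the vertices inside $\Delta$ equals $i_\Delta(W|_\Delta)$, which satisfies the rhombus conditions by Proposition \ref{hive=red=tri}. Since this holds in every ideal triangle, $i_\mathcal{T}(W) \in {\bf Hive}(\mathcal{T})$.

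Injectivity then follows quickly. If $i_\mathcal{T}(W) = i_\mathcal{T}(W')$, then the local identification above gives $i_\Delta(W|_\Delta) = i_\Delta(W'|_\Delta)$ for each ideal triangle $\Delta$, whence $W|_\Delta = W'|_\Delta$ by Proposition \ref{hive=red=tri}. In particular, the sequences of boundary endpoints of $W$ and $W'$ along both sides of every bigon of $\mathcal{T}_2$ coincide. Since a minimal ladder is uniquely determined by its boundary endpoints (Example \ref{minimal.ladder}), the restrictions of $W$ and $W'$ to each bigon also coincide, so $W = W'$ globally.

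For surjectivity, given $h \in {\bf Hive}(\mathcal{T})$, I would construct a reduced web realizing $h$ by first producing triangle-local reduced webs $W_\Delta$ with $i_\Delta(W_\Delta) = h|_\Delta$ via the explicit inverse formulas in the proof of Proposition \ref{hive=red=tri}, and then filling each bigon of $\mathcal{T}_2$ by the minimal ladder matching the boundary endpoints of $W_\Delta$ and $W_{\Delta'}$ on the two adjacent triangles. The main obstacle is verifying the compatibility of these boundary endpoints along each shared ideal edge $e$ of $\mathcal{T}$: since the two hive values at the vertices on $e$ are common to both triangles, and these values equal the corresponding local intersections $i(W_\Delta,V_i)$ and $i(W_{\Delta'},V_i)$, I expect the signed strand counts---namely the decompositions of $\mathbb{I}([W],[V_i])$ into summands of $\tfrac{1}{3}$ and $\tfrac{2}{3}$ according to the orientation of each crossing---to agree from the two sides. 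Once this compatibility is established, the minimal ladder inside each bigon is uniquely determined, the resulting glued web $W$ is reduced because no new internal $k$-gons with fewer than $6$ sides are introduced along the gluing, and the identity $i_\mathcal{T}(W) = h$ follows from the triangle-local computation proved in the first step.
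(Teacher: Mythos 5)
Your overall architecture coincides with the paper's: localize every intersection number $\mathbb{I}([W],[V_i])$ to a single ideal triangle via good position (Proposition \ref{proposition:gp}) and the two minimality lemmas (Lemmas \ref{lem: representative of ideal arc} and \ref{lem: minimal intersection for tripod}), apply the triangle case (Proposition \ref{hive=red=tri}), and then glue across the bigons. Your first step, showing that $i_{\mathcal{T}}(W)$ lands in ${\bf Hive}(\mathcal{T})$, is correct and is exactly the paper's argument.

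The gluing step, however, contains a genuine gap in two places. First, the edge compatibility that you only ``expect'' to hold is the crux of surjectivity and must actually be verified; the paper does this by computing from Lemma \ref{dwndij} that the numbers of strands of $W_\Delta$ crossing a side of $\Delta$ in the two orientations are $2a_1-a_2$ and $2a_2-a_1$, where $a_1,a_2$ are the two hive values on that side. Since these counts depend only on data shared by the two adjacent triangles, they match and a minimal ladder connecting them exists. Second, and more seriously, your claims that the glued web is already reduced and that the triangle restrictions determine $W$ up to homotopy are both false as stated. A web in good position generally contains ``I'' bars in the bigons whose presence violates reducedness and whose resolution alters small faces; moreover, a given reduced web admits many good positions, differing by reorderings of the corner arcs inside the triangles, so $i_\Delta(W|_\Delta)=i_\Delta(W'|_\Delta)$ only pins down the parameters $(x,y,z,t,u,v,w)$ of each triangle piece and does not give matching endpoint sequences along the bigons. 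The paper closes both gaps by invoking the Douglas--Sun machinery: the procedure of Section~6.3 of \cite{DS20a} reorders corner arcs and resolves the ``I'' bars to produce a genuine reduced web, and Lemma~57 of {\it loc.\ cit.}\ shows the result is unique up to homotopy equivalence. Without this (or an equivalent normalization-and-uniqueness argument), both your injectivity and surjectivity arguments are incomplete.
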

\begin{proof}
 Let $W\in \mathscr{W}_{\hat{S}}^\mathcal{A}$ be in a good position with respect to $\mathcal{T}_2$. By Lemma \ref{lem: shorter representative} and Lemma \ref{lem: minimal intersection for tripod}, we may restrict the ideal edges and the tripods within their corresponding ideal triangles. By Proposition \ref{hive=red=tri}, we see that $i_\mathcal{T}(W)$ is a hive in ${\bf Hive}(\mathcal{T})$.

It remains to show that for any hive $\{a_\bullet\}$ in ${\bf Hive}(\mathcal{T})$, there is a unique reduced web $W$ such that $i_\mathcal{T}(W)= \{a_\bullet\}$. By restricting the data $\{a_\bullet\}$ to each ideal triangle in $\mathcal{T}$ and applying Proposition \ref{hive=red=tri}, we obtain a reduced web $W_t$ for each ideal triangle $t$ of $\mathcal{T}$. Meanwhile, by a direct calculation, if the hive $\{a_\bullet\}$ on one side of $t$ are $a_1$ and $a_2$, then the number of the oriented arcs of $W_t$ intersecting with the same side of $t$ are $2a_1-a_2$ and $2a_2-a_1$ respectively.  
\begin{figure}[H]
\begin{tikzpicture}[scale=1.3]
\draw[thick] (0,0)--(60:3)--(3,0)--(0,0);
\node at (1,0) {$\bullet$};
\node at (2,0) {$\bullet$};
\node at (60:1) {$\bullet$};
\node at (60:2)
{$\bullet$};
\begin{scope}[shift={(3,0)}] 
\node at (120:1) {$\bullet$};
\node at (120:2) {$\bullet$};
\end{scope}
\begin{scope}[shift={(2,0)}] 
\node at (120:1) {$\bullet$};
\end{scope}
\node at (1,1.4) {};
\node at (2,1.4) {};
\node at (0.5,.6) {};
\node at (1.5,.6) {};
\node at (2.5,.6) {};
\node at (1,-.3) {$a_1$};
\node at (2,-.3) {$a_2$};
\begin{scope}[shift={(5,0)}]
\draw[thick] (0,0)--(60:3)--(3,0)--(0,0);
\draw[thick, -latex] (0.7,.6)--(0.7,0);
\draw[thick, latex-] (2.3,.6)--(2.3,0);
\node at (0.7,-.3) {$2a_1-a_2$};
\node at (2.3,-.3) {$2a_2-a_1$};
\end{scope}
\end{tikzpicture}
\caption{Oriented arcs that intersect a side of a triangle.}
\end{figure}
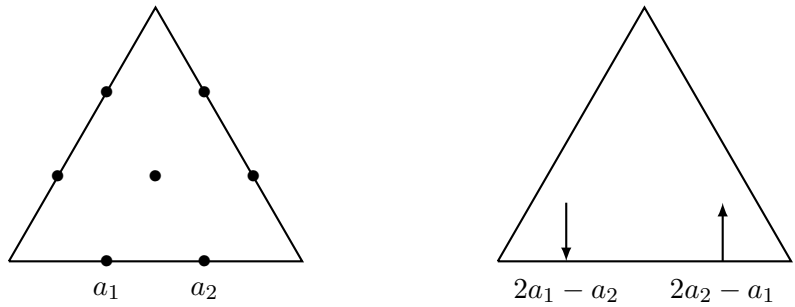

As a result, we see that for any neighbored ideal triangles $t$ and $t'$, the webs $W_t$ and $W_{t'}$ can be connected by a minimal ladder in the bigon separating them.  In this way, we obtain a web $W$ in a good position with respect to $\mathcal{T}_2$. Following the procedure in Section 6.3 of \cite{DS20a}, one may reorder the position of the corner arcs in each ideal triangle, and resolve the ``I'' bars in the bigons, obtaining a reduced web $W'.$ Meanwhile, by Lemma 57 of {\it loc.cit.}, the obtained reduced web $W'$ is unique upto homotopy equivalence on $\hat{S}\times [0,1]$. Hence, the map $i_\mathcal{T}$ is a bijection. 
\end{proof}

\subsection{Compatibility}

In this subsection, we prove the second part of Theorem \ref{Int.hive}, that is, the images of intersection maps for different ideal triangulations are related by octahedron relations.

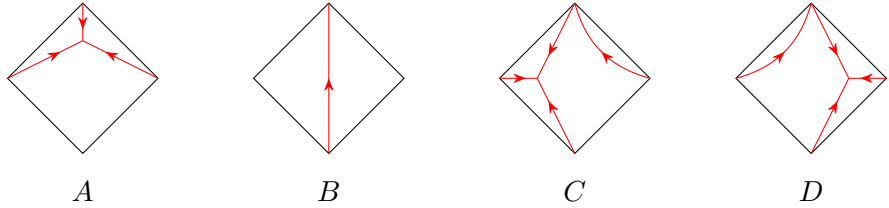
\begin{figure}[H]
    \centering
    \begin{tikzpicture}[baseline=0]
    \node at (0,-1.5) [] {$A$};
        \draw (0,-1) -- (1,0) -- (0,1) -- (-1,0) -- cycle;
    \draw [red, decoration={markings, mark=at position 0.7 with {\arrow{Stealth}}}, postaction=decorate] (0,1) -- (0,0.5);
    \draw [red, decoration={markings, mark=at position 0.7 with {\arrow{Stealth}}}, postaction=decorate] (1,0) -- (0,0.5);
    \draw [red, decoration={markings, mark=at position 0.7 with {\arrow{Stealth}}}, postaction=decorate] (-1,0) -- (0,0.5);
    \end{tikzpicture} 
    \hspace{1cm}
    \begin{tikzpicture}[baseline=0]
    \node at (0,-1.5) [] {$B$};
        \draw (0,-1) -- (1,0) -- (0,1) -- (-1,0) -- cycle;
    \draw [red, decoration={markings, mark=at position 0.5 with {\arrow{Stealth}}}, postaction=decorate] (0,-1) to  (0,1);
    \end{tikzpicture} \hspace{1cm}
    \begin{tikzpicture}[baseline=0]
    \node at (0,-1.5) [] {$C$};
        \draw (0,-1) -- (1,0) -- (0,1) -- (-1,0) -- cycle;
    \draw [red, decoration={markings, mark=at position 0.7 with {\arrow{Stealth}}}, postaction=decorate] (-1,0) -- (-0.5,0);
    \draw [red, decoration={markings, mark=at position 0.7 with {\arrow{Stealth}}}, postaction=decorate] (0,1) -- (-0.5,0);
    \draw [red, decoration={markings, mark=at position 0.7 with {\arrow{Stealth}}}, postaction=decorate] (0,-1) -- (-0.5,0);
    \draw [red, decoration={markings, mark=at position 0.5 with {\arrow{Stealth}}}, postaction=decorate] (1,0) to [bend left] (0,1);
    \end{tikzpicture}\hspace{1cm}
    \begin{tikzpicture}[baseline=0]
    \node at (0,-1.5) [] {$D$};
        \draw (0,-1) -- (1,0) -- (0,1) -- (-1,0) -- cycle;
    \draw [red, decoration={markings, mark=at position 0.7 with {\arrow{Stealth}}}, postaction=decorate] (0,1) -- (0.5,0);
    \draw [red, decoration={markings, mark=at position 0.7 with {\arrow{Stealth}}}, postaction=decorate] (1,0) -- (0.5,0);
    \draw [red, decoration={markings, mark=at position 0.7 with {\arrow{Stealth}}}, postaction=decorate] (0,-1) -- (0.5,0);
    \draw [red, decoration={markings, mark=at position 0.5 with {\arrow{Stealth}}}, postaction=decorate] (-1,0) to [bend right] (0,1);
    \end{tikzpicture}
    \caption{Exchanging branches}
    \label{fig:exchange.webs}
\end{figure}

\begin{lemma}\label{prop:flip invariant} 
Let $W$ be a reduced $\mathcal{A}$ web on $\hat{S}$ and let $A$, $B$, $C$, and $D$ be reduced $\mathcal{X}$ webs in Figure \ref{fig:exchange.webs}. We have
\[
\mathbb{I}([W],[A])+\mathbb{I}([W],[B])=\max\{\mathbb{I}([W],[C]),\mathbb{I}([W],[D])\}.
\]

\end{lemma}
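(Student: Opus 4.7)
The plan is to localize the computation to the quadrilateral with vertices $T, R, B, L$ and reduce the identity to an explicit piecewise-linear statement in the local data of $W$, paralleling the octahedron relation~\eqref{oact.rec} for Knutson--Tao hives. I would first fix an ideal triangulation $\mathcal{T}$ of $\hat{S}$ containing the diagonal $TB$, so that the arc $B$ is realized as an ideal edge. By Proposition~\ref{proposition:gp}, $W$ can be placed in good position with respect to the split triangulation $\mathcal{T}_2$; within our quadrilateral, $W$ then restricts to a pair of reduced webs on the two ideal triangles $\Delta_L = TLB$ and $\Delta_R = TRB$ glued by a minimal ladder in the bigon along $TB$. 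By Lemmas~\ref{lem: representative of ideal arc} and~\ref{lem: minimal intersection for tripod}, each of the four intersection numbers depends only on the restriction of $W$ to this quadrilateral, and the two components of $C$ (respectively $D$) can be homotoped independently into disjoint regions---a tripod supported in $\Delta_L$ (resp.\ $\Delta_R$) and an arc supported along the boundary edge $TR$ (resp.\ $TL$)---so $\mathbb{I}(W,C) = \mathbb{I}(W,C_{\mathrm{trip}}) + \mathbb{I}(W,C_{\mathrm{arc}})$ and similarly for $D$.

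Next, using Lemma~\ref{dwndij}, I would express $\mathbb{I}(W,B), \mathbb{I}(W,C), \mathbb{I}(W,D)$ as explicit piecewise-linear functions of the parameters $(x^L, y^L, z^L, t^L, u^L, v^L, w^L)$ and $(x^R, \ldots, w^R)$ of Example~\ref{tri-redu-web} describing the reduced webs in $\Delta_L$ and $\Delta_R$. To compute $\mathbb{I}(W,A)$, I would flip to the triangulation $\mathcal{T}'$ containing the diagonal $LR$, re-place $W$ in good position with respect to $\mathcal{T}'_2$, and apply Lemma~\ref{dwndij} to the upper triangle $\Delta_U = TLR$ of $\mathcal{T}'$, in which $A$ now sits as the central tripod. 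The identity $\mathbb{I}(W,A) + \mathbb{I}(W,B) = \max\{\mathbb{I}(W,C), \mathbb{I}(W,D)\}$ then follows by direct comparison of these expressions and is an instance of the $\operatorname{SL}_3$ octahedron relation~\eqref{oact.rec} restricted to the quadrilateral.

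The main obstacle lies in tracking how the local parameters of $W$ on $\Delta_L, \Delta_R$ transform to those on $\Delta_U, \Delta_D = BLR$ after the flip; this requires resolving the minimal ladder in the bigon along $TB$ and reordering corner arcs, in the spirit of the final paragraph of the proof of Theorem~\ref{theorem:int} and of Section~6.3 of~\cite{DS20a}. An alternative, more intrinsic route that bypasses the flip entirely is to express all four intersection numbers as minima of geodesic-distance sums in the dual surfacoid $W^*$ between the dual vertices near the corners $T, R, B, L$, using Lemmas~\ref{lem: homotoping a path} and~\ref{lem: homotoping a tripod} to collapse each representative onto nets and necklaces, and then deduce the identity from Lemma~\ref{Ylemma.trop} applied to the local model of $W^*$ as a subgraph of $\Gamma$.
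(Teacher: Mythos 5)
Your primary route has a genuine gap. Computing $\mathbb{I}([W],[B])$, $\mathbb{I}([W],[C])$, $\mathbb{I}([W],[D])$ from the good position of $W$ with respect to the triangulation containing the diagonal $B$, and $\mathbb{I}([W],[A])$ from the good position with respect to the flipped triangulation, requires knowing how the local parameters $(x,y,z,t,u,v,w)$ of $W$ on the two triangles of one triangulation determine those on the two triangles of the other. That transformation --- resolving the minimal ladder in the bigon and re-forming the honeycombs, as in Section 6.3 of \cite{DS20a} --- is precisely the content you would need before any ``direct comparison'' is possible, and asserting that the outcome ``is an instance of the octahedron relation \eqref{oact.rec}'' is the statement to be proved, not a proof of it. You flag this as ``the main obstacle'' but offer no way past it, so as written the argument reduces the lemma to an unproved and substantial combinatorial identity. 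A smaller point also needs justification: the splitting $\mathbb{I}([W],[C])=\mathbb{I}([W],[C_{\mathrm{trip}}])+\mathbb{I}([W],[C_{\mathrm{arc}}])$ requires that minimizing representatives of the two components of $C$ can be realized simultaneously and disjointly; this holds here but should be stated.

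Your alternative route is the one the paper actually takes, but the sketch omits its two essential steps. First, one must locate a single vertex $x$ of $W^*$ --- the top corner of the mesh of the net of $W^*$ in the top triangle of the $TB$-triangulation --- through which both a minimizing representative of the tripod $A$ (its trivalent vertex, by Lemma \ref{lem: minimal intersection for tripod}) and a minimizing straight-path representative of the diagonal $B$ (by Lemma \ref{lem: representative of ideal arc}) can be arranged to pass; exchanging a leg of $A$ with half of $B$ at $x$ produces representatives of $C$ and of $D$ of total length $i(W,A)+i(W,B)$, which yields only the inequality $\mathbb{I}([W],[A])+\mathbb{I}([W],[B])\geq \max\{\mathbb{I}([W],[C]),\mathbb{I}([W],[D])\}$. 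Second, the reverse inequality requires a case analysis on the corner-arc counts $l_1$ versus $l_2$ (and $r_1$ versus $r_2$) of $W$ in the two triangles, showing that in each case the exchanged representative of $C$ or of $D$ is itself already minimizing; this uses the observation that the trivalent vertex can be slid from $x$ to the top corner $y$ of the mesh of the left (or right) net without changing the intersection number, after which Lemmas \ref{lem: representative of ideal arc} and \ref{lem: minimal intersection for tripod} apply. Lemma \ref{Ylemma.trop}, applied to a single triangle region, does not see the interaction between the two decompositions of $W^*$ and cannot by itself deliver the equality.
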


\begin{proof} By the local properties of the intersection maps for tripods and ideal edges (Lemma \ref{lem: representative of ideal arc} and Lemma \ref{lem: minimal intersection for tripod}), it suffices to consider  the case when $\hat{S}$ is a disk with four marked points. Let $W^*$ be the dual surfacoid of $W$ as in Figure \ref{figure:dualgr}. We compute the desired intersection numbers by making the four $\mathcal{X}$-webs travel along $W^*$. 

With respect to the top-bottom triangulations of $\hat{S}$, $W^*$ is separated into three subgraphs: 
\begin{itemize}
\item a net $T(W^*)$ in the top triangle, 
\item a union of necklaces $H(W^*)$ in the middle bigon, 
\item a net $B(W^*)$ in the bottom triangle.
\end{itemize}
The pattern of each region is illustrated by Figure \ref{fig: two types of regions}.
By Lemma \ref{lem: minimal intersection for tripod}, there exists a representative $A$ of $[A]$ attaining the value $\mathbb{I}([W],[A])$, with the trivalent vertex lying inside the mesh of $T(W^*)$. In particular, by Lemma \ref{lem: representative of ideal arc}, we can fix the trajectory of the tripod $A$ along $W^*$ so that its two lower legs travel along the boundary of $T(W^*)$, meeting at the trivalent vertex located at the top corner $x$ of the mesh of $T(W^*)$. 
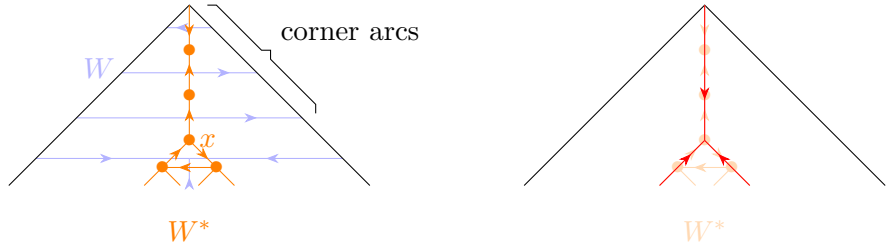
\begin{figure}[H]
    \centering
    \begin{tikzpicture}[scale=1.2]
    \node [lightblue] at (-1,1.3) [] {$W$};
    \node [orange] at (0,-0.5) [] {$W^*$};
    \draw (0.2,2) -- (0.3,2) -- (0.8,1.5) -- (0.9,1.5) node [above right] {corner arcs} -- (0.9,1.4) -- (1.4,0.9) -- (1.4,0.8);
     \draw [lightblue, decoration={markings, mark=at position 0.8 with {\arrow{Stealth}}}, postaction=decorate] (0.25,1.75) -- (-0.25,1.75);
     \draw [lightblue, decoration={markings, mark=at position 0.8 with {\arrow{Stealth}}}, postaction=decorate] (-0.75,1.25) -- (0.75,1.25);
     \draw [lightblue, decoration={markings, mark=at position 0.8 with {\arrow{Stealth}}}, postaction=decorate] (-1.25,0.75) -- (1.25,0.75);
     \draw [lightblue, decoration={markings, mark=at position 0.5 with {\arrow{Stealth}}}, postaction=decorate] (-1.7,0.3) -- (-0,0.3);
     \draw [lightblue, decoration={markings, mark=at position 0.5 with {\arrow{Stealth}}}, postaction=decorate] (1.7,0.3) -- (-0,0.3);
     \draw [lightblue, decoration={markings, mark=at position 0.3 with {\arrow{Stealth}}}, postaction=decorate] (0,0) -- (-0,0.3);
        \draw (-2,0) -- (0,2) -- (2,0);
        \draw [orange, decoration={markings, mark=at position 0.7 with {\arrow{Stealth}}}, postaction=decorate] (0,2) -- (0,1.5);
        \node [orange] at (0,1.5) [] {$\bullet$};
        \draw [orange, decoration={markings, mark=at position 0.7 with {\arrow{Stealth}}}, postaction=decorate] (0,1) -- (0,1.5);
        \node [orange] at (0,1) [] {$\bullet$};
        \draw [orange, decoration={markings, mark=at position 0.7 with {\arrow{Stealth}}}, postaction=decorate] (0,0.5) -- (0,1);
        \node [orange] at (0,0.5) [] {$\bullet$};
        \draw [orange, decoration={markings, mark=at position 0.7 with {\arrow{Stealth}}}, postaction=decorate] (-0.3,0.2) -- (0,0.5);
        \node [orange] at (-0.3,0.2) [] {$\bullet$};
        \draw [orange, decoration={markings, mark=at position 0.7 with {\arrow{Stealth}}}, postaction=decorate] (0,0.5) -- (0.3,0.2);
        \node [orange] at (0.3,0.2) [] {$\bullet$};
        \draw [orange, decoration={markings, mark=at position 0.7 with {\arrow{Stealth}}}, postaction=decorate] (0.3,0.2) -- (-0.3,0.2);
        \draw [orange] (-0.5,0) -- (-0.3,0.2);
        \draw [orange] (-0.3,0.2) -- (-0.1,0);
        \draw [orange] (0.1,0) -- (0.3,0.2);
        \draw [orange] (0.3,0.2) -- (0.5,0);
        \node [orange] at (0,0.5) [right] {$x$};
    \end{tikzpicture} \hspace{1cm}
    \begin{tikzpicture}[scale=1.2]
    \node [lightorange] at (0,-0.5) [] {$W^*$};
        \draw (-2,0) -- (0,2) -- (2,0);
        \draw [lightorange, decoration={markings, mark=at position 0.7 with {\arrow{Stealth}}}, postaction=decorate] (0,2) -- (0,1.5);
        \node [lightorange] at (0,1.5) [] {$\bullet$};
        \draw [lightorange, decoration={markings, mark=at position 0.7 with {\arrow{Stealth}}}, postaction=decorate] (0,1) -- (0,1.5);
        \node [lightorange] at (0,1) [] {$\bullet$};
        \draw [lightorange, decoration={markings, mark=at position 0.7 with {\arrow{Stealth}}}, postaction=decorate] (0,0.5) -- (0,1);
        \node [lightorange] at (0,0.5) [] {$\bullet$};
        \draw [lightorange, decoration={markings, mark=at position 0.7 with {\arrow{Stealth}}}, postaction=decorate] (-0.3,0.2) -- (0,0.5);
        \node [lightorange] at (-0.3,0.2) [] {$\bullet$};
        \draw [lightorange, decoration={markings, mark=at position 0.7 with {\arrow{Stealth}}}, postaction=decorate] (0,0.5) -- (0.3,0.2);
        \node [lightorange] at (0.3,0.2) [] {$\bullet$};
        \draw [lightorange, decoration={markings, mark=at position 0.7 with {\arrow{Stealth}}}, postaction=decorate] (0.3,0.2) -- (-0.3,0.2);
        \draw [lightorange] (-0.5,0) -- (-0.3,0.2);
        \draw [lightorange] (-0.3,0.2) -- (-0.1,0);
        \draw [lightorange] (0.1,0) -- (0.3,0.2);
        \draw [lightorange] (0.3,0.2) -- (0.5,0);
        \draw [red, decoration={markings, mark=at position 0.7 with {\arrow{Stealth}}}, postaction=decorate] (0,2) -- (0,0.5);
        \draw [red, decoration={markings, mark=at position 0.7 with {\arrow{Stealth}}}, postaction=decorate] (-0.5,0) -- (0,0.5);
        \draw [red, decoration={markings, mark=at position 0.7 with {\arrow{Stealth}}}, postaction=decorate] (0.5,0) -- (0,0.5);
    \end{tikzpicture}
    \caption{Left: local picture of $W^*$ when $W$ is in good position with respect to the top-bottom triangulation. Right: trajectory of the tripod $A$ along $W^*$.}
    \label{fig:local picture of W^*}
\end{figure}

With respect to the left-right triangulation of $\hat{S}$, $W^*$ is seperated into three subgraphs:
\begin{itemize}
\item a net $L(W^*)$ in the left triangle;
\item a union of necklaces $V(W^*)$ in the middle triangle;
\item a net $R(W^*)$ in the right triangle.
\end{itemize}
The vertex $x$ of $W^*$ now belongs to the bigon $V(W^*)$ in the middle. By Lemma \ref{lem: representative of ideal arc},  $i(W,B)$ attains its minimum when $B$ travels along a straight path that is a boundary of the bigon region. Note that this straight path must pass through the vertex $x$. By exchanging a leg of $A$ and part of $B$ at the vertex $x$, we get a representative of $[C]$ and a representative of $[D]$ as in Figure \ref{fig:exchange}. This shows that 
\begin{equation}
\label{exchange,acd}
\mathbb{I}([W],[A])+\mathbb{I}([W],[B])\geq \max \{\mathbb{I}([W],[C]),\mathbb{I}([W],[D])\}.
\end{equation}

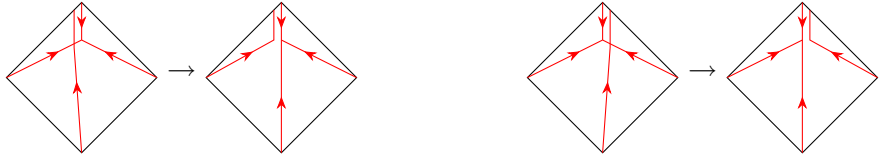
\begin{figure}[t]
    \centering
    \begin{tikzpicture}[baseline=0]
        \draw (0,-1) -- (1,0) -- (0,1) -- (-1,0) -- cycle;
    \draw [red, decoration={markings, mark=at position 0.7 with {\arrow{Stealth}}}, postaction=decorate] (0,1) -- (0,0.5);
    \draw [red, decoration={markings, mark=at position 0.7 with {\arrow{Stealth}}}, postaction=decorate] (1,0) -- (0,0.5);
    \draw [red, decoration={markings, mark=at position 0.7 with {\arrow{Stealth}}}, postaction=decorate] (-1,0) -- (0,0.5);
    \draw [red, decoration={markings, mark=at position 0.5 with {\arrow{Stealth}}}, postaction=decorate] (0,-1) to  (-0.1,0.4) -- (-0.1,0.9);
    \end{tikzpicture} $\rightarrow$
    \begin{tikzpicture}[baseline=0]
        \draw (0,-1) -- (1,0) -- (0,1) -- (-1,0) -- cycle;
    \draw [red, decoration={markings, mark=at position 0.7 with {\arrow{Stealth}}}, postaction=decorate] (0,1) -- (0,0.5);
    \draw [red, decoration={markings, mark=at position 0.7 with {\arrow{Stealth}}}, postaction=decorate] (1,0) -- (0,0.5);
    \draw [red, decoration={markings, mark=at position 0.5 with {\arrow{Stealth}}}, postaction=decorate] (-1,0) -- (-0.1,0.5) -- (-0.1,0.9);
    \draw [red, decoration={markings, mark=at position 0.5 with {\arrow{Stealth}}}, postaction=decorate] (0,-1) to  (0,0.5);
    \end{tikzpicture} \hspace{2cm}
    \begin{tikzpicture}[baseline=0]
        \draw (0,-1) -- (1,0) -- (0,1) -- (-1,0) -- cycle;
    \draw [red, decoration={markings, mark=at position 0.7 with {\arrow{Stealth}}}, postaction=decorate] (0,1) -- (0,0.5);
    \draw [red, decoration={markings, mark=at position 0.7 with {\arrow{Stealth}}}, postaction=decorate] (1,0) -- (0,0.5);
    \draw [red, decoration={markings, mark=at position 0.7 with {\arrow{Stealth}}}, postaction=decorate] (-1,0) -- (0,0.5);
    \draw [red, decoration={markings, mark=at position 0.5 with {\arrow{Stealth}}}, postaction=decorate] (0,-1) to  (0.1,0.4) -- (0.1,0.9);
    \end{tikzpicture} $\rightarrow$
    \begin{tikzpicture}[baseline=0]
        \draw (0,-1) -- (1,0) -- (0,1) -- (-1,0) -- cycle;
    \draw [red, decoration={markings, mark=at position 0.7 with {\arrow{Stealth}}}, postaction=decorate] (0,1) -- (0,0.5);
    \draw [red, decoration={markings, mark=at position 0.5 with {\arrow{Stealth}}}, postaction=decorate] (1,0) -- (0.1,0.5) -- (0.1,0.9);
    \draw [red, decoration={markings, mark=at position 0.7 with {\arrow{Stealth}}}, postaction=decorate] (-1,0) -- (0,0.5);
    \draw [red, decoration={markings, mark=at position 0.5 with {\arrow{Stealth}}}, postaction=decorate] (0,-1) to (0,0.5);
    \end{tikzpicture}
    \caption{Exchanging legs}
    \label{fig:exchange}
\end{figure}

 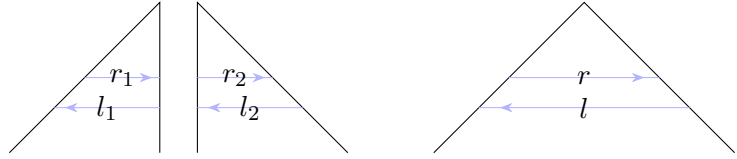
\begin{figure}[H]
\begin{tikzpicture}
\draw (0,-2)--(0,0)--(-2,-2);
\draw (0.5,-2)--(0.5,0)--(2.5,-2);
\draw [lightblue, decoration={markings, mark=at position 0.9 with {\arrow{Stealth}}}, postaction=decorate] (-1,-1)--(0,-1);
\draw [lightblue, decoration={markings, mark=at position 0.9 with {\arrow{Stealth}}}, postaction=decorate] (0,-1.4)--(-1.4,-1.4);
\node at (-.5,-1) {$r_1$};
\node at (-.7,-1.4) {$l_1$};
\draw [lightblue, decoration={markings, mark=at position 0.9 with {\arrow{Stealth}}}, postaction=decorate] (0.5,-1)--(1.5,-1);
\draw [lightblue, decoration={markings, mark=at position 0.9 with {\arrow{Stealth}}}, postaction=decorate] (1.9,-1.4)--(0.5,-1.4);
\node at (1,-1) {$r_2$};
\node at (1.2,-1.4) {$l_2$};
\end{tikzpicture}
\hskip 1cm
\begin{tikzpicture}
\draw (-2,-2)--(0,0)--(2,-2);
\draw [lightblue, decoration={markings, mark=at position 0.9 with {\arrow{Stealth}}}, postaction=decorate] (-1,-1)--(1,-1);
\draw [lightblue, decoration={markings, mark=at position 0.9 with {\arrow{Stealth}}}, postaction=decorate] (1.4,-1.4)--(-1.4,-1.4);
\node at (0,-1) {$r$};
\node at (0,-1.4) {$l$};
\end{tikzpicture}
\caption{Corner arcs with respect to different triangulations}
\label{gluing.tri}
\end{figure}
As illustrated by Figure \ref{gluing.tri}, let $r_1$ and $l_1$ be the numbers of oriented corner arcs of $W$ restricted to the left triangle.
Let $r_2$ and $l_2$ be the numbers of corner arcs restricted to  the right triangle.
Let $r$ and $l$ be the numbers of corner arcs restricted to the top triangle. Following the procedure of gluing ideal triangles, we have 
\[
r=\min\{r_1, r_2\}; \qquad l=\min\{l_1,l_2\}
\]
To show the equality of \eqref{exchange,acd}, we split the proof into two cases. 

\smallskip 

{\bf Case 1}: $l_1\leq l_2$. Let $y$ be the vertex on the top corner of the mesh of $L(W^*)$. 
Recall the vertex $x$ in Figure \ref{fig:local picture of W^*}.  After exchanging the legs as in the right picture of Figure \ref{fig:exchange}, the resulted representative $D$ has two $\mathcal{X}$-web components: a straight path that is parallel to the upper right side of the quadrilateral, and a tripod on the left with trivalent vertex at $x$. Note that one of the branches of the tripod is still traveling along the left boundary of the bigon region until it reaches $x$. By Lemma \ref{lem: representative of ideal arc},  the intersection number between $W$ and the straight path component already attains its minimum.

If $r_1\leq r_2$, then $x$ coincides with $y$. Hence $x$ resides on the mesh of $L(W^*)$.
  By Lemma \ref{lem: minimal intersection for tripod}, the intersection number between $W$ and the left tripod reaches its minimum as well, and hence we can conclude that 
 \[\mathbb{I}([W],[A])+\mathbb{I}([W],[B])=\mathbb{I}([W],[D]).
 \]

If $r_1 >r_2$, then there are $r_1-r_2$ many upward edges from $y$ to $x$. By construction, two of the branches of the tripod travel along the straight path from $y$ to $x$ (left picture in Figure \ref{fig:between x and y}). If we move the trivalent vertex of the tripod from $x$ to $y$ along the straight path, shrinking the two lower branches while extending the upper branch, the intersection number between the tripod and $W$ remains unchanged. In the end, we arrive at a configuration as in the right picture of Figure \ref{fig:between x and y}. Since the trivalent vertex of the tripod is inside the triangle region now, we can again apply Lemma \ref{lem: minimal intersection for tripod} and conclude that 
\[\mathbb{I}([W],[A])+\mathbb{I}([W],[B])=\mathbb{I}([W],[D]).\]

\begin{figure}[H]
    \centering
    \begin{tikzpicture}[scale=0.5]
        \foreach \i in {0,...,3}
        {
        \draw [lightorange, decoration={markings, mark=at position 0.7 with {\arrow{Stealth}}}, postaction=decorate] (\i,\i) -- (\i+1,\i+1);
        \draw [lightorange, decoration={markings, mark=at position 0.7 with {\arrow{Stealth}}}, postaction=decorate] (\i+1,\i+1) -- (\i+2,\i);
        \draw [lightorange, decoration={markings, mark=at position 0.7 with {\arrow{Stealth}}}, postaction=decorate] (\i+2,\i) -- (\i,\i);
        }
        \node [orange] at (4,4) [] {$\bullet$};
        \node [orange] at (4,4) [right] {$x$};
        \node [orange] at (0,0) [] {$\bullet$};
        \node [orange] at (0,0) [left] {$y$};
        \draw [red, decoration={markings, mark=at position 0.7 with {\arrow{Stealth}}}, postaction=decorate] (4,5.5) -- (4,4);
        \draw [red, decoration={markings, mark=at position 0.7 with {\arrow{Stealth}}}, postaction=decorate] (-1.1,-1) -- (3.9,4);
        \draw [red, decoration={markings, mark=at position 0.7 with {\arrow{Stealth}}}, postaction=decorate] (1.1,-1) -- (0.1,0) -- (4.1,4);
    \end{tikzpicture} \hspace{2cm}
    \begin{tikzpicture}[scale=0.5]
        \foreach \i in {0,...,3}
        {
        \draw [lightorange, decoration={markings, mark=at position 0.7 with {\arrow{Stealth}}}, postaction=decorate] (\i,\i) -- (\i+1,\i+1);
        \draw [lightorange, decoration={markings, mark=at position 0.7 with {\arrow{Stealth}}}, postaction=decorate] (\i+1,\i+1) -- (\i+2,\i);
        \draw [lightorange, decoration={markings, mark=at position 0.7 with {\arrow{Stealth}}}, postaction=decorate] (\i+2,\i) -- (\i,\i);
        }
        \node [orange] at (4,4) [] {$\bullet$};
        \node [orange] at (4,4) [right] {$x$};
        \node [orange] at (0,0) [] {$\bullet$};
        \node [orange] at (0,0) [left] {$y$};
        \draw [red, decoration={markings, mark=at position 0.7 with {\arrow{Stealth}}}, postaction=decorate] (4,5.5) -- (4,4) -- (0,0);
        \draw [red, decoration={markings, mark=at position 0.7 with {\arrow{Stealth}}}, postaction=decorate] (-1,-1) -- (0,0);
        \draw [red, decoration={markings, mark=at position 0.7 with {\arrow{Stealth}}}, postaction=decorate] (1,-1) -- (0,0);
    \end{tikzpicture}
    \caption{Local pictures between the vertices $x$ and $y$ in $W^*$.}
    \label{fig:between x and y}
\end{figure}
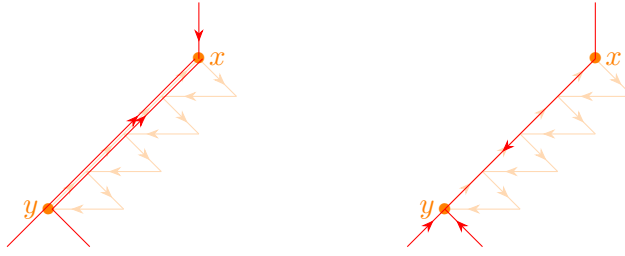

{\bf Case 2}: $l_1\geq l_2$. This is symmetric to the previous case and can be handled analogously, 
 yielding $\mathbb{I}([W],[A])+\mathbb{I}([W],[B])=\mathbb{I}([W],[C])$ instead. 
\end{proof}

 Let $\mathcal{T}$ and $\mathcal{T}'$ be two ideal triangulations related by a flip of a diagonal. Following Figure \eqref{figure:flip}, the four new webs associated with $\mathcal{T}'$ can be obtained in four steps of changes. In each step, we only use the relation in Lemma \ref{prop:flip invariant}. A direct comparison shows that they coincide with the four octahedron relations in \eqref{oact.rec}, which concludes the proof of the second part of Theorem \ref{Int.hive}.

\begin{figure}[htb]
\includegraphics[scale=0.5]{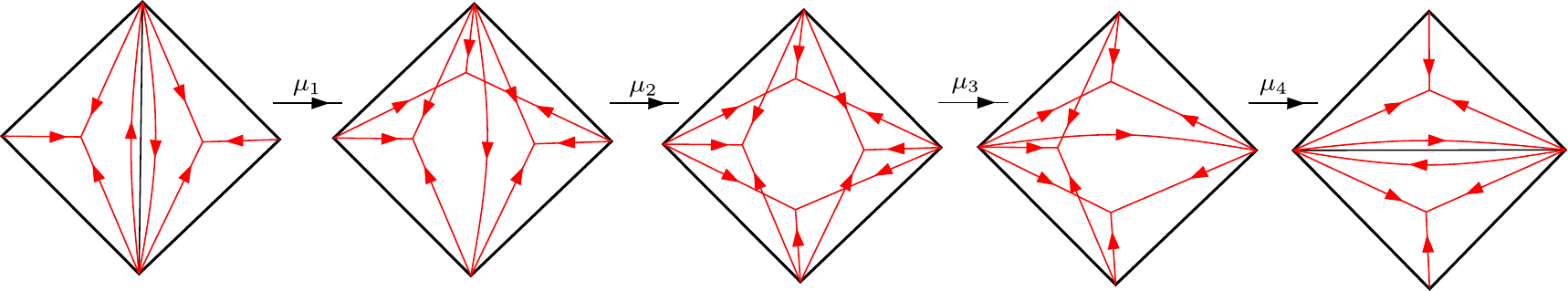}
\caption{The flip $\mu=\mu_4\circ\mu_3\circ\mu_2\circ\mu_1$.}
\label{figure:flip}
\end{figure}

\section{Fock-Goncharov  Moduli Spaces}
\label{section:wa}
In this section, we recall the Fock-Goncharov $\mathcal{A}$-moduli spaces and their cluster structures. We further consider the tropicalization of the $\mathcal{A}$-moduli spaces  restricted by the potential function $P$ following \cite{GS15}.  

\subsection{Definitions}

A \emph{flag} $F$ in a 3-dimensional vector space $E$ is a filtration of subspaces 
\[\{0\}=F^{(0)} \subset F^{(1)} \subset F^{(2)} \subset F^{(3)}=E, \qquad \mbox{where } {\dim }F^{(i)}=i.\] 
A \emph{decorated flag} is  a flag $F$ with a pair of non-zero vectors ${f}^i \in \wedge^i F^{(i)}$ for $i=1,2$. Let $\mathcal{A}$ denote the moduli space of decorated flags. The action of ${\rm SL}_3$ on $E$ induces a transitive action of $\operatorname{SL}_3$ on  $\mathcal{A}$. After fixing a decorated flag, we obtain an isomorphism $\mathcal{A}\stackrel{\sim}{=}{\rm SL}_3/{\rm U}$, where ${\rm U}\subset{\rm SL}_3$ is the subgroup of unipotent upper triangular matrices. 

Let $\hat{S}$ be a decorated surface as in Section \ref{subsec2.1}. Following \cite[Definition 2.4]{FG06}, we recall the Fock-Goncharov moduli space $\mathcal{A}_{\operatorname{SL}_3,\hat{S}}$. 

\begin{defn} Let $\rho$ be a $\operatorname{SL}_3$-local system on $S$.
A {\em decoration on $\rho$}  is a flat section  $\xi$ of the restriction of the associated decorated flag bundle $\rho \underset{\operatorname{SL}_3}{\times} \mathcal{A}$ to $m_b\cup m_p$. The moduli space  $\mathcal{A}_{\operatorname{SL}_3,\hat{S}}$ parametrizes the pairs $(\rho,\xi)$ up to the equivalence $(\rho,\xi)\sim(g\rho g^{-1},g\xi)$ for $g\in {\rm SL}_3$.
\end{defn}

\begin{example} 
\label{triangle.a.space}
Let $\hat{S}=\Delta$ be a disk with three marked points. Note that the fundamental group $\pi_1(\Delta)$ is trivial. Therefore the moduli space $\mathcal{A}_{{\rm SL}_3, \Delta}$ parametrizes a triple of decorated flags modulo the diagonal action of ${\rm SL}_3$:
\[
\mathcal{A}_{{\rm SL}_3, \Delta}= {\rm SL}_3\backslash \mathcal{A}^3.
\]
Recall the quiver $Q_\Delta$. Each vertex $v$ of $Q_\Delta$ corresponds to a triple $(i,j,k)$ of non-negative integers with $i+j+k=3$.

\begin{figure}[H]
\begin{tikzpicture}[scale=1.6]
\draw[gray!50, thick] (0,0)--(60:3)--(3,0)--(0,0);
\draw[red, thick, latex-] (60:2)++(0.15,0) -- ++ (0.65, 0);
\draw[red, thick, -latex] (60:2)++(-60:0.15) -- ++ (-60:0.65);
\draw[red, thick, -latex] (1,0)++(60:1.2) -- ++ (60:0.65);
\draw[red, thick, latex-] (60:1)++(0.2,0) -- ++ (0.4, 0);
\draw[red, thick, -latex] (60:1)++(-60:0.15) -- ++ (-60:0.65);
\draw[red, thick, -latex] (1,0)++(60:0.2) -- ++ (60:0.65);
\draw[red, thick, latex-] (60:1)++(1.4,0) -- ++ (0.4, 0);
\draw[red, thick, -latex] (60:2)++(-60:1.15) -- ++ (-60:0.65);
\draw[red, thick, -latex] (2,0)++(60:0.2) -- ++ (60:0.65);
\node (A) at (1,0) {$(0,2,1)$};
\node (B) at (2,0) {$(0,1,2)$};
\node at (0.3,0.866) {$(1,2,0)$};
\node at (0.8,1.732) {$(2,1,0)$};
\node at (2.7,0.866) {$(1,0,2)$};
\node at (2.2,1.732) {$(2,0,1)$};
\begin{scope}[shift={(2,0)}] 
\node at (120:1) {$(1,1,1)$};
\end{scope}
\node[blue] at (1.5, 2.65) {$(f_1,f_2)$};
\node[blue] at (-.4,0) {$(g_1,g_2)$};
\node[blue] at (3.4,0) {$(h_1,h_2)$};
\end{tikzpicture}
\caption{The quiver $Q_\Delta$.}
\end{figure}
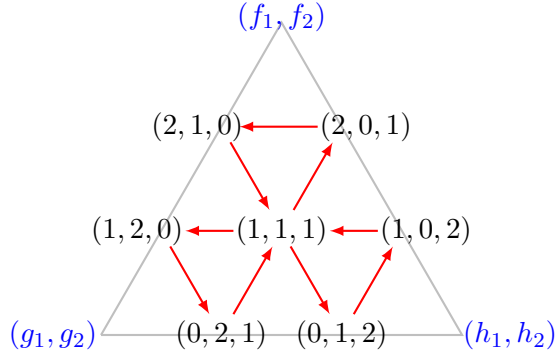
Every marked point of $\Delta$ is associated with a decorated flag. Let
\begin{align*}
(f_1,f_2),\; (g_1,g_2),\; (h_1,h_2)
\end{align*}
be their corresponding pairs of nonzero vectors. Let us fix a volume form $\Omega$ on the 3-dimensional vector space $E$.  Following \cite[Section 9]{FG06}, we set
\begin{equation}
    \label{defn:FGA}
  A_{i,j,k}:=\Omega\left(f^i \wedge g^j \wedge h^k\right).
\end{equation}
\end{example}

In general, let $\mathcal{T}$ be an ideal triangulation of a decorated surface $\hat{S}$.  Recall the quiver $Q_{\mathcal{T}}$ with the vertex set $\Theta_{\mathcal{T}}$. The restriction of $(\rho, \xi)$ to every ideal triangle in $\mathcal{T}$ gives rise to a triple of decorated flags. By applying \eqref{defn:FGA}, we obtain a collection  $\{A_v\}_{v\in \Theta_{\mathcal{T}}}$  of {\em Fock-Goncharov $\mathcal{A}$-coordinates} for $\mathcal{A}_{{\rm SL}_3,\hat{S}}$.

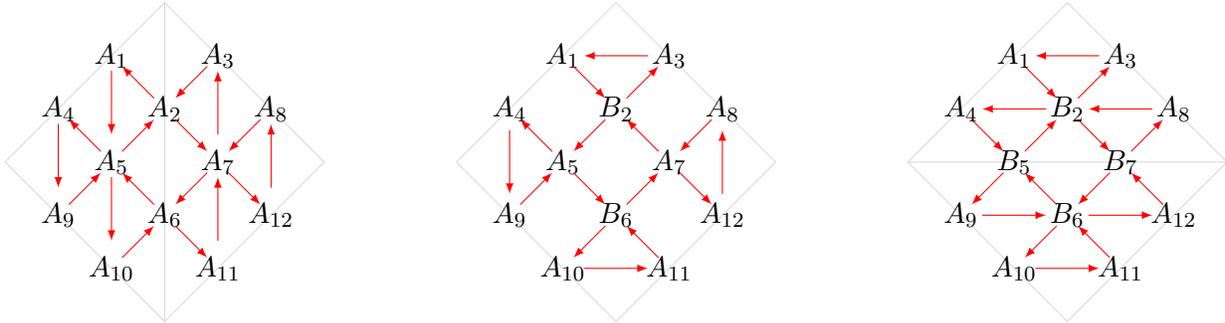
\begin{figure}[H]
\begin{tikzpicture}
\begin{scope}[rotate=-45]
\draw[gray!30] (0,0)--(-3,3)--(0,3)--(0,0)--(-3,0)--(-3,3);
\node at (-1,1) {$A_6$};
\node at (-2,2) {$A_2$};
\node at (0,1) {$A_{11}$};
\node at (0,2) {$A_{12}$};
\node at (-1,0) {$A_{10}$};
\node at (-2,0) {$A_9$};
\node at (-2,3) {$A_3$};
\node at (-1,3) {$A_8$};
\node at (-1,2) {$A_7$};
\node at (-2,1) {$A_5$};
\node at (-3,2) {$A_1$};
\node at (-3,1) {$A_4$};
 \foreach \position in {(-2,1), (-2,2), (-1,1)}
    {\draw[-latex,  red] \position++(-.2,0) -- ++(-.6,0);
    \draw[-latex,  red] \position++(0,-0.8) -- ++(0,0.6);
    \draw[-latex,  red] \position++(-1,0)++(-45:0.2) -- ++(0.6,-0.6);}
 \foreach \position in {(-1,2), (-2,2), (-1,1)}
    {\draw[latex-,  red] \position++(0,0.2) -- ++(0,0.6);
    \draw[latex-,  red] \position++(0.8,0) -- ++(-0.6,0);
    \draw[latex-,  red] \position++(0,1)++(-45:0.2) -- ++(0.6,-0.6);}   
\end{scope}

\begin{scope}[xshift=6cm]
\begin{scope}[rotate=-45]
\draw[gray!30] (-3,3)--(0,3)--(0,0)--(-3,0)--(-3,3);
\node at (-1,1) {$B_6$};
\node at (-2,2) {$B_2$};
\node at (0,1) {$A_{11}$};
\node at (0,2) {$A_{12}$};
\node at (-1,0) {$A_{10}$};
\node at (-2,0) {$A_9$};
\node at (-2,3) {$A_3$};
\node at (-1,3) {$A_8$};
\node at (-1,2) {$A_7$};
\node at (-2,1) {$A_5$};
\node at (-3,2) {$A_1$};
\node at (-3,1) {$A_4$};
 \foreach \position in {(-2,2), (-1,1)}
    {\draw[latex-,  red] \position++(-.2,0) -- ++(-.6,0);
    \draw[latex-,  red] \position++(0,-0.8) -- ++(0,0.6);
    \draw[latex-,  red] \position++(0, 0.8) -- ++(0, -0.6);
    \draw[latex-, red] \position++(0.2,0) -- ++(0.6,0);
   }

\draw[-latex,  red] (-1,0)++(0.2,0.2)--++(0.6,0.6);
\draw[latex-, red] (-3,2)++(0.2,.2)--++(0.6,.6);
\draw[-latex, red] (-2.2,1) -- ++(-.6,0);
    \draw[-latex, red] (-2,.2) -- ++(0,0.6);
    \draw[-latex, red] (-2.8,0.8) -- ++(0.6,-0.6);
    \draw[latex-,  red] (-0.8,2.8) -- ++(0.6,-0.6);
    \draw[-latex, red] (-1,2.8) -- ++(0,-0.6);
    \draw[-latex, red] (-.8,2) -- ++(0.6,0);   
\end{scope}  
\end{scope}

\begin{scope}[xshift=12cm]
\begin{scope}[rotate=-45]
\draw[gray!30] (-3,3)--(0,3)--(0,0)--(-3,0)--(-3,3);
\draw[gray!30] (-3,0)--(0,3);
    \draw[latex-,  red] (-3,2)++(0.2,0.2) -- ++(0.6,0.6);
    \draw[latex-,  red] (-3,1)++(0.2,0.2) -- ++(0.6,0.6);
    \draw[latex-,  red] (-2,2)++(0.2,0.2) -- ++(0.6,0.6);
    \draw[-latex,  red] (-2,0)++(0.2,0.2) -- ++(0.6,0.6);
    \draw[-latex,  red] (-1,0)++(0.2,0.2) -- ++(0.6,0.6);
    \draw[-latex,  red] (-1,1)++(0.2,0.2) -- ++(0.6,0.6);
    
    \draw[-latex,  red] (-3,2)++(0.2,0) -- ++(0.6,0);
    \draw[-latex,  red] (-3,1)++(0.2,0) -- ++(0.6,0);
    \draw[-latex,  red] (-2,2)++(0.2,0) -- ++(0.6,0);
    \draw[latex-,  red] (-2,0)++(0,0.2) -- ++(0,0.6);
    \draw[latex-,  red] (-1,0)++(0,0.2) -- ++(0,0.6);
    \draw[latex-,  red] (-1,1)++(0,0.2) -- ++(0,0.6);

    \draw[-latex,  red] (-2,2)++(0,0.2) -- ++(0,0.6);
    \draw[-latex,  red] (-2,1)++(0,0.2) -- ++(0,0.6);
    \draw[-latex,  red] (-1,2)++(0,0.2) -- ++(0,0.6);
    \draw[latex-,  red] (-2,0)++(0.2,1) -- ++(0.6,0);
    \draw[latex-,  red] (-1,0)++(0.2,1) -- ++(0.6,0);
    \draw[latex-,  red] (-1,1)++(0.2,1) -- ++(0.6,0);

  \node at (-1,1) {$B_6$};
\node at (-2,2) {$B_2$};
\node at (0,1) {$A_{11}$};
\node at (0,2) {$A_{12}$};
\node at (-1,0) {$A_{10}$};
\node at (-2,0) {$A_9$};
\node at (-2,3) {$A_3$};
\node at (-1,3) {$A_8$};
\node at (-1,2) {$B_7$};
\node at (-2,1) {$B_5$};
\node at (-3,2) {$A_1$};
\node at (-3,1) {$A_4$};   
\end{scope}  
\end{scope}

\end{tikzpicture}
\caption{Fock-Goncharov coordinates related by a flip of a diagonal}
\label{Fig:coo,acd.flip}
\end{figure}

The coordinate charts  $\{A_v\}_{v\in \Theta_{\mathcal{T}}}$  corresponding to different 3-triangulations of $\hat{S}$ are related by a sequence of cluster mutations \cite[Section 10]{FG06}. In detail, suppose $\mathcal{T}$ and $\mathcal{T}'$ are related by a flip of a diagonal. Within the quadrilateral containing the diagonal, the left quiver in Figure \ref{Fig:coo,acd.flip} consists of the coordinates associated with $\mathcal{T}$, and the right quiver consists of the coordinates associated with $\mathcal{T}'$. They related by four {\it cluster mutations}:
\begin{equation}
\label{cluster.mutation}
\begin{array}{l}
A_2B_2=A_1A_7+A_3A_5, \qquad A_6B_6=A_5A_{11}+A_7A_{10}\\
A_5B_5=A_4B_6+A_9B_2, \qquad A_7B_7=A_{12}B_2+A_8B_{6}.\\
\end{array}
\end{equation}
The rest coordinates are kept invariant. 
Putting them together, we obtain a canonical cluster $\mathcal{A}$ structure on the moduli space $\mathcal{A}_{{\rm SL}_3,\hat{S}}$. 

\smallskip

The potential $P$, introduced in \cite{GS15}, is a function  on the moduli space $\mathcal{A}_{G, \hat{S}}$ for an arbitrary reductive group $G$. For the purpose of the present paper, below we present an explicit expression of $P$ for $G={\rm SL}_3$ in terms of the Fock-Goncharov coordinates. Let $m$ be a marked point or a puncture of $\hat{S}$. The monodromy $g_m$ of each $(\rho, \xi)\in \mathcal{A}_{{\rm SL}_3, \hat{S}}$ associted with $m$ is a $3\times 3$ unipotent upper triangular matrix. Let $g_{ij}$ be the ${ij}$-th entry of $g_m$. The potential associated with $m$ is defined as the sum of the entries along the sub-diagonal
$P_m:= g_{12}+g_{23}.$
The total potential is 
\[
P:=\sum_{m\in m_b\cup m_p} P_m.
\]

\begin{example} Continuing as in Example \ref{triangle.a.space}, let $a, b, c$ be the marked points of $\Delta$. There are three unit rhombi whose short diagonals are parallel to the side $bc$. 
 As in Lemma 3.1 of \cite{GS15},  we define
\begin{equation}
\label{equation:alpha}
\alpha_g=\frac{A_{1,1,1}}{A_{2,1,0}A_{2,0,1}};\qquad \alpha_b=\frac{A_{2,1,0}A_{0,2,1}}{A_{1,2,0}A_{1,1,1}}; \qquad \alpha_r= \frac{A_{0,1,2}A_{2,0,1}}{A_{1,1,1}A_{1,0,2}}.
\end{equation}
Here the $A$-coordinates on the right correspond to the vertices of each unit rhombus, where the two vertices of the short diagonals are the denominators.  The potential associated with the vertex $a$ is the sum
  \[
  P(\Delta)_a:= \alpha_g+\alpha_b+\alpha_r.
  \]
  Similarly, by taking the unit rhombi with respect to $b$ and $c$, we get the potential associated with the ideal triangle $\Delta$
  \[
  P(\Delta):=P(\Delta)_a+P(\Delta)_b+P(\Delta)_c.
  \]
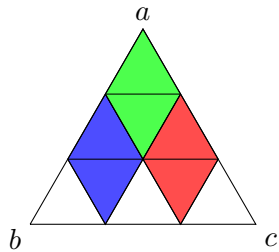
\begin{figure}[H]
\begin{tikzpicture}
\node at (1.5,2.8) {$a$};
\node at (-.2,-.2) {$b$};
\node at (3.2,-.2) {$c$};
\draw [fill=blue!70] (60:1)--++(60:1)--++(-60:1)--++(-120:1)--(60:1);
\draw [fill=green!70] (60:2)--++(60:1)--++(-60:1)--++(-120:1)--++(120:1);
\draw [fill=red!70] (60:1)++(1,0)--++(60:1)--++(-60:1)--++(-120:1)--++(120:1);
\draw (0,0)--(60:3)--(3,0)--(0,0);
\draw (60:2)--++(1,0);
\draw (60:1)--++(2,0);
\draw (1,0)--++(60:2);
\draw (2,0)--++(60:1);
\draw (1,0)--++(120:1);
\draw (2,0)--++(120:2);
\end{tikzpicture}
\caption{The green rhombus contains vertices that appear in the definition of $\alpha_g$; the same goes for the blue rhombus with $\alpha_b$ and the red rhombus with $\alpha_r$.}
\label{figure:acoor1}
\end{figure}
\end{example}

\begin{remark} Let $\mathcal{T}$ be an arbitrary ideal triangulation of $\hat{S}$. By the {\it scissor congruence invariance} property of the potential in \cite{GS15}, the potential $P$ is a function on $\mathcal{A}_{\operatorname{SL}_3,\hat{S}}$ presented by  
\begin{equation*}
P=\sum P(\Delta),
\end{equation*}
where the sum is over all the ideal triangles $\Delta$ in $\mathcal{T}$.
Note that the original geometric definition of $P$ does not depend on the ideal triangulation $\mathcal{T}$ chosen. 
\end{remark}
\begin{remark}
Below we include a list of applications of the  potential  in the higher Teichm\"uller theory.
\begin{enumerate}
\item In \cite{GS15}, the potentials are understood as the mirror Landau--Ginzburg potentials where they formulated a concrete homological mirror symmetry between $(\mathcal{A}_{G,\hat{S}},P)$ and the generalized character variety $\mathcal{L}_{G^L,\hat{S}}$. 
\item When $\hat{S}$ is a disk with three marked points on the boundary and $G=\operatorname{GL}_n$, the tropicalization of the  potential $P$ recovers the Knutson--Tao's hive model \cite{KT98}. 
\item When $\hat{S}$ is a punctured surface, in \cite{HS23}, for each simple root and each puncture, the  partial potential is understood as generalized horocycle length which provides a family of McShane-type identities. 
\item The partial potentials play important roles in the quantization of the moduli spaces of $G$-local systems in \cite{GS19} and the punctured skein relations \cite{SSWa}. In particular, when $\hat{S}$ is a punctured disk with two marked points, the quantized partial potentials correspond to generators of the quantum group $\mathcal{U}_q(\mathfrak{g})$. See \cite{S22}.
\end{enumerate}
\end{remark}

\subsection{Tropicalization.}
\label{sec.42}
Every cluster variety admits a totally positive structure and can be further tropicalized. Below we briefly recall the tropicalization of $\mathcal{A}_{{\rm SL}_3, \hat{S}}$. 

Note that the transition maps \eqref{cluster.mutation} among different cluster charts of $\mathcal{A}_{{\rm SL}_3, \hat{S}}$ are subtraction-free. A {\it positive rational function} is a nonzero function that can be presented as a ratio of two polynomials with positive integer coefficients in one (and therefore every) cluster chart of $\mathcal{A}_{{\rm SL}_3, \hat{S}}$. Denote by $\mathbb{Q}_+(\mathcal{A}_{{\rm SL}_3, \hat{S}})$ the collection of all positive functions. 

A {\it semifield} is a set $F$ equipped with operations of addition and multiplication, so that addition is commutative and associative, multiplication makes $F$ into an abelian group, and they satisfy the usual distributivity: $(a + b)c = ac + bc$ for $a,b,c\in F$. Note that $\mathbb{Q}_+(\mathcal{A}_{{\rm SL}_3, \hat{S}})$ is a semifield. For an arbitrary semifield $F$, we define its set of $F$-points as 
\[
\mathcal{A}_{{\rm SL}_3, \hat{S}}(F):={\bf Hom}_{\mbox{semifield}}\left(\mathbb{Q}_+(\mathcal{A}_{{\rm SL}_3, \hat{S}}), F\right).
\]

\begin{example} Let $F=\mathbb{R}_{>0}$ be the semifield of positive real numbers with usual addition and multiplication. The $\mathbb{R}_{>0}$-points of $\mathcal{A}_{{\rm SL}_3, \hat{S}}$ gives rise to the  total positive part of $\mathcal{A}_{{\rm SL}_3, \hat{S}}$
\begin{equation}
\label{positive.space}
\mathcal{A}_{\operatorname{SL}_3,\hat{S}}(\mathbb{R}_{>0}):=\left\{ x\in \mathcal{A}_{\operatorname{SL}_3,\hat{S}}~ \middle|~ A_v(x)>0~\mbox{for every cluster coordinate $A_v$}\right\}.
\end{equation}
Since the transition maps between any pair of cluster charts are given by positive rational functions, it is enough to require that $A_v(x)>0$ in \eqref{positive.space} for $A_v$ associated with one cluster chart. 
\end{example}

\begin{example}
The tropical semifield $\mathbb{R}^t:=(\mathbb{R},+, \max)$ is the set $\mathbb{R}$ with the usual addition as the multiplication and the $\max$ as the addition. 
Let $f\in \mathbb{Q}_+(\mathcal{A}_{{\rm SL}_3,\hat{S}})$ be a positive rational function. Its {\it tropicalization} is a  function defined tautologically as 
\[
f^t: \mathcal{A}_{\operatorname{SL}_3,\hat{S}}(\mathbb{R}^t)\longrightarrow \mathbb{R}, \qquad f^t(l):=l(f).
\]
The tropicalization $f^t$ is a piecewise linear function that replaces the multiplication in $f$ by addition, and the addition by $\max$.
For example, if $f=\frac{A_1^2+A_2^3}{A_1+2}$, then 
\[f^t=\max\{2A_1^t, 3A_2^t\}- \max\{A_1^t,0\}.\]

Let $n=\dim \mathcal{A}_{{\rm SL}_3, \hat{S}}$.
The tropical set $\mathcal{A}_{\operatorname{SL}_3,\hat{S}}(\mathbb{R}^t)$ has a piecewise-linear structure, isomorphic to $\mathbb{R}^n$ in many different ways.
In detail, let $\alpha_{\mathcal{T}}:=\{A_{1}, \ldots, A_{n}\}$ be the cluster chart associated with an ideal triangulation $\mathcal{T}$ of $\hat{S}$. Its tropicalization is a bijective map 
\[
\alpha_{\mathcal{T}}^t:=(A_1^t, \ldots, A_n^t): ~ \mathcal{A}_{\operatorname{SL}_3,\hat{S}}\left(\mathbb{R}^t\right)\stackrel{\sim}{\longrightarrow} \mathbb{R}^n.
\]
For a different ideal triangulation $\mathcal{T}'$, the transition map $\alpha_{\mathcal{T}'}^t\circ (\alpha_{\mathcal{T}}^t)^{-1}$ 
is given by the tropicalization of the positive birational map $\alpha_{\mathcal{T}'}\circ \alpha_{\mathcal{T}}^{-1}$. Similarly, we may replace $\mathbb{R}$ by $\mathbb{Z}$  in the above construction, and define the subsets of $\mathbb{Z}$- tropical points
\[
\mathcal{A}_{\operatorname{SL}_3,\hat{S}}\left(\mathbb{Z}^t\right)\subset 
\mathcal{A}_{\operatorname{SL}_3,\hat{S}}\left(\mathbb{R}^t\right).
\]

\end{example}

 By  Equation \eqref{equation:alpha}, for each  ideal triangulation $\mathcal{T}$, the tropicalization of the potential $P$  is a piecewise linear function $P^t$ on $\mathcal{A}_{{\rm SL}_3, \hat{S}}(\mathbb{R}^t)$ that can be presented as
 \begin{equation}
 \label{trop.potential}
 P^t=\max\left\{\alpha^t\right\},
 \end{equation}
 where $\alpha$ are the Laurent polynomials as in \eqref{equation:alpha} associated with unit rhombus of $Q_{\mathcal{T}}$.
 By imposing the condition $P^t\leq 0$, we obtain a convex cone
\begin{equation}
\label{posicone}
\mathcal{A}_{{\rm SL}_3,\hat{S}}^+\left(\mathbb{R}^t\right):=\left\{ x\in \mathcal{A}_{{\rm SL}_3,\hat{S}}\left(\mathbb{R}^t \right)~\middle|~P^t(x)\leq 0\right\}.
\end{equation}
\begin{remark} The paper \cite{GS15} uses the tropical semifield $(\mathbb{Z}, +, \min)$, and define the positive cone via requiring $P^t \geq 0$. The positive cone defined in {\it loc.cit.} is isomorphic to \eqref{posicone} via the map taking $t\in \mathbb{Z}$ to $-t$. 
\end{remark}
By replacing the group ${\rm SL}_3$ by ${\rm PGL}_3$, we define the moduli space $\mathcal{A}_{{\rm PGL}_3, \hat{S}}$. Note that $\mathcal{A}_{{\rm PGL}_3, \hat{S}}$ may have several disconnected components. Meanwhile, there is a natural finite-to-one map from $\mathcal{A}_{{\rm SL}_3, \hat{S}}$ to $\mathcal{A}_{{\rm PGL}_3, \hat{S}}$, whose image is a component of $\mathcal{A}_{{\rm PGL}_3, \hat{S}}$ with a totally positive structure. Similarly, we define the tropical points of  $\mathcal{A}_{{\rm PGL}_3, \hat{S}}$ with a different lattice structure:
\[\mathcal{A}_{\operatorname{SL}_3,\hat{S}}(\mathbb{Z}^t)\subset \mathcal{A}_{\operatorname{PGL}_3,\hat{S}}(\mathbb{Z}^t) \subset \mathcal{A}_{\operatorname{PGL}_3,\hat{S}}\left(\mathbb{R}^t\right)=\mathcal{A}_{\operatorname{SL}_3,\hat{S}}\left(\mathbb{R}^t\right).\]
By imposing the condition $P^t \leq 0$, we obtain the cone
\[
\mathcal{A}_{\operatorname{PGL}_3,\hat{S}}^+(\mathbb{Z}^t)= \mathcal{A}_{\operatorname{PGL}_3,\hat{S}}(\mathbb{Z}^t) \cap  \mathcal{A}_{\operatorname{SL}_3,\hat{S}}^+(\mathbb{R}^t)
\]
More explicitly, following \cite{GS15}, we see that 
\begin{equation}
\label{defn:tropweb}
\mathcal{A}_{\operatorname{PGL}_3,\hat{S}}^+(\mathbb{Z}^t):=\left\{x\in \mathcal{A}_{\operatorname{SL}_3,\hat{S}}\left(\mathbb{R}^t\right)\;\bigg|\; \text{for any } \alpha \text{ of $P^t$  in  \eqref{trop.potential}}, \alpha^t(x) \in \mathbb{Z}_{\leq 0} \right\}.
\end{equation}

By definition, the map $\alpha^t_{\mathcal{T}}$ identifies the set $\mathcal{A}_{{\rm PGL}_3,\hat{S}}^+$ with ${\bf Hive}(\mathcal{T})$. The transition maps relating different triangulations are related by the tropicalization of the cluster mutations \eqref{cluster.mutation}, which coincides with octahedron relations \eqref{oact.rec}. 
Following Theorem \ref{Int.hive}, we get a natural isomorphism
\[
\mathcal{A}_{{\rm PGL}_3,\hat{S}}^+(\mathbb{Z}^t)\stackrel{\sim}{=} \mathscr{W}_{\hat{S}}^{\mathcal{A}}.
\]

\subsection{Intersection pairings revisited}

In this subsection, we discuss several different perspectives on the intersection pairings. 

\smallskip 

\noindent {\bf A. Geometric interpretation of reduced $\mathcal{X}$-webs.} The paper \cite{GS15} introduces the moduli space $\mathcal{P}_{{\rm G}, \hat{S}}$, as a refinement of the Fock-Goncharov moduli space $\mathcal{X}_{{\rm G}, \hat{S}}$ in \cite{FG06}. Let $\mathcal{B}\stackrel{\sim}{=} {\rm G}/{\rm B}$ be the flag variety associated with ${\rm G}$. 
\begin{defn} The moduli space $\mathcal{P}_{{\rm G}, \hat{S}}$ parametrizes the ${\rm G}$-orbits of the data $(\rho, \xi, \gamma)$, where
\begin{itemize}
\item $\rho$ is a ${\rm G}$-local system on $S$;
\item $\xi$ is flat section of the associated bundle $\rho \times_{{\rm G}} \mathcal{A}$ restricted to every boundary interval in $\partial{S}\backslash m_b$;

\item $\gamma$ is a flat section of the associated bundle $\rho \times_{{\rm G}}\mathcal{B}$ restricted to every boundary circle in $\partial{S}\backslash m_b$.
\end{itemize}
\end{defn}

When the group ${\rm G}$ is adjoint, e.g., ${\rm G}={\rm PGL}_n$, then the moduli space carries a cluster Poisson structure \cite{GS19}. 
For example, if ${\rm G}={\rm PGL}_3$, recall the quiver $Q_{\mathcal{T}}$ with the vertex set $\Theta_{\mathcal{T}}$. By consider cross ratios associated with flags, the space $\mathcal{P}_{{\rm PGL}_3, \hat{S}}$ is equipped with a collection $\{X_v\}_{v\in \Theta_{\mathcal{T}}}$ of cluster $\mathcal{X}$-coordinates. Locally, a cluster mutation yields the following change of coordinates. 

\begin{figure}[H]
\begin{tikzpicture}[scale=1]
\node (O) at (0,0) {$X$};
\node (D) at (1,1) {$X_4$};
\node (C) at (1,-1) {$X_3$};
\node (B) at (-1,-1) {$X_2$};
\node (A) at (-1,1) {$X_1$};
\draw[latex-,  red] (O) -- (D);
\draw[latex-,  red] (O) --(B);
\draw[latex-,  red] (C)--(O);
\draw[latex-,  red] (A)--(O);
\draw[latex-,  red] (D) -- (C);
\draw[latex-,  red] (B) -- (A);
\end{tikzpicture} \hspace{2cm}
\begin{tikzpicture}[scale=1]
\node (O) at (0,0) {$X^{-1}$};
\node [label=right: $X_4(1+X^{-1})^{-1}$] (D) at (0.7,1) {};
\node [label=right: $X_3(1+X)$] (C) at (0.7,-1) {};
\node [label=left: $X_2(1+X^{-1})^{-1}$] (B) at (-.7,-1) {};
\node [label=left: $X_1(1+X)$] (A) at (-.7,1) {};
\draw[-latex,  red] (O) -- (.8,.8);
\draw[latex-,  red] (O) --(.8,-.8);
\draw[latex-,  red] (-.8,-.8)--(O);
\draw[-latex,  red] (-.8,.8)--(O);
\draw[-latex,  red] (D) -- (A);
\draw[-latex,  red] (B) -- (C);
\end{tikzpicture}
\end{figure}

Note that the transition maps are subtraction-free. Therefore $\mathcal{P}_{{\rm PGL}_3, \hat{S}}$ admits a totally positive structure and can be further tropicalized. Let us impose the potential condition \eqref{posicone} to every boundary interval and further require that the monodromy surrounding each puncture corresponds to a dominant coweight of ${\rm PGL}_3$. In this way, we obtain a cone 
\[
\mathcal{P}_{{\rm PGL}_3,\hat{S}}^+(\mathbb{R}^t)\subset \mathcal{P}_{{\rm PGL}_3,\hat{S}}(\mathbb{R}^t).
\]
Furthermore, We expect the following conjecture to be true.

\begin{conj} There is a natural bijection $\mathcal{P}_{{\rm PGL}_3,\hat{S}}^+(\mathbb{Z}^t)\stackrel{\sim}{=} \mathscr{W}_{\hat{S}}^{\mathcal{X}}$.
\end{conj}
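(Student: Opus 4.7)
The conjecture proposes a bijection dual to Theorem \ref{Int.hive}, with the roles of $\mathcal{A}$- and $\mathcal{X}$-webs swapped. My plan is to adapt the three-stage strategy of the $\mathcal{A}$-case: define an intersection map from $\mathscr{W}_{\hat{S}}^{\mathcal{X}}$ to a lattice cone via pairings with a distinguished family of reduced $\mathcal{A}$-webs, prove bijectivity on a single ideal triangle, then globalize via a gluing argument and verify compatibility under flips.

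First, I would assign to each vertex $i\in \Theta_\mathcal{T}$ a reduced $\mathcal{A}$-web $[W_i]$ whose intersection number with an $\mathcal{X}$-web encodes the tropical cluster $\mathcal{X}$-coordinate $X_i^t$ on $\mathcal{P}_{\operatorname{PGL}_3,\hat{S}}$. Since cluster $\mathcal{X}$-coordinates are cross-ratios attached to quadrilaterals, the natural candidates for $[W_i]$ are arcs crossing the ideal edge carrying $i$ and a localized honeycomb-type web inside the triangle whose center carries $i$. This yields a candidate
\[
j_\mathcal{T}: \mathscr{W}_{\hat{S}}^{\mathcal{X}} \longrightarrow \left(\tfrac{1}{3}\mathbb{Z}\right)^{\Theta_\mathcal{T}}, \qquad [V] \longmapsto \left\{\mathbb{I}([W_i],[V])\right\}_{i\in\Theta_\mathcal{T}}.
\]

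Next, I would establish the triangle case by proving an analogue of Example \ref{tri-redu-web} and Proposition \ref{hive=red=tri} for reduced $\mathcal{X}$-webs on an ideal triangle $\Delta$: classify such webs via a central honeycomb together with arcs entering the three marked points, parametrized by an integer tuple, and compute $j_\Delta$ explicitly in terms of these parameters. This should identify $j_\Delta$ with a bijection onto the integer points of the cone cut out by the tropical potential conditions on the boundary intervals together with the dominance condition on puncture monodromies, via formulas of rhombus-inequality type. With this local model in hand, the surface case follows by a gluing argument mirroring Section~3.4: one proves an $\mathcal{X}$-web good-position theorem analogous to Proposition \ref{proposition:gp}, then uses analogues of Lemmas \ref{lem: shorter representative}, \ref{lem: representative of ideal arc}, and \ref{lem: minimal intersection for tripod} to show the intersection-minimizing representatives of the dual $\mathcal{A}$-webs can be confined inside a single ideal triangle or bigon, reducing global calculations to the triangle case. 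Flip equivariance would follow from a variant of Lemma \ref{prop:flip invariant} in which the exchanged $\mathcal{A}$-webs produce the tropical cluster $\mathcal{X}$-mutation formulas rather than the octahedron relation.

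The main obstacle, and the likely reason the statement is posed only as a conjecture, will be twofold. First, the choice of dual webs $[W_i]$ is delicate: unlike the $\mathcal{A}$-side, where $A_v$ are monomial invariants naturally paired with edge arcs and tripods, $\mathcal{X}$-coordinates are multiplicative cross-ratios whose tropicalization does not obviously factor through a simple local intersection pattern, so identifying the correct family (and verifying that $\mathbb{I}([W_i],[V])$ truly realizes $X_i^t$ under the bijection) is the delicate computation. Second, handling the puncture monodromy conditions has no direct $\mathcal{A}$-analogue: the cone $\mathcal{P}_{\operatorname{PGL}_3,\hat{S}}^+(\mathbb{Z}^t)$ is constrained not only by tropical potentials on boundary intervals but also by dominance of monodromy around each puncture, and matching these constraints to the reducedness conditions of $\mathcal{X}$-webs (absence of peripheral loops, internal $k$-gons with $k\geq 6$, and one-vertex $k$-gons with $k\geq 4$) requires a careful local analysis in an annular neighborhood of each puncture, where the standard triangulation-based intersection techniques do not immediately apply.
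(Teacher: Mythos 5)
This statement is left as an open conjecture in the paper --- no proof is given, only a pointer to \cite{IK22} for partial progress --- so there is no ``paper proof'' to compare against. What you have written is a plausible strategy outline obtained by dualizing the proof of Theorem \ref{Int.hive}, but it is not a proof: the two steps you yourself flag as ``delicate'' are precisely the mathematical content of the conjecture, and they are deferred rather than carried out. Concretely, (i) you never construct the distinguished family $[W_i]$ of reduced $\mathcal{A}$-webs, and without an explicit candidate one cannot even verify the triangle case; the paper's $\mathcal{A}$-side argument works because the webs $[V_i]$ (oriented edges and inward tripods) are written down and Lemma \ref{dwndij} computes the pairing in closed form. (ii) The claimed analogue of Example \ref{tri-redu-web} and Proposition \ref{proposition:gp} for reduced $\mathcal{X}$-webs is asserted, not proved. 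Reduced $\mathcal{X}$-webs terminate at marked points and punctures, so near a puncture a web component can wind arbitrarily before entering it; there is no split-triangulation ``good position'' statement for $\mathcal{X}$-webs in the paper, and the paper's minimization machinery (Lemmas \ref{lem: shorter representative}--\ref{lem: minimal intersection for tripod}) is built on the dual surfacoid of an $\mathcal{A}$-web, which has no developed $\mathcal{X}$-side counterpart.

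A further specific obstruction you correctly identify but do not resolve: the cone $\mathcal{P}_{{\rm PGL}_3,\hat{S}}^+(\mathbb{Z}^t)$ is cut out both by tropical potential inequalities at boundary intervals and by the dominance condition on the monodromy coweight at each puncture. Matching the latter to the reducedness conditions on $\mathcal{X}$-webs (no peripheral loops, the $k\geq 6$ and $k\geq 4$ polygon conditions) requires a local analysis in an annular neighborhood of each puncture that has no analogue on the $\mathcal{A}$-side, where webs never touch punctures. Until the family $[W_i]$ is exhibited, the triangle-case bijection is computed, and the puncture analysis is done, the proposal remains a research plan rather than a proof --- which is consistent with the statement being posed as a conjecture in the first place.
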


\begin{example}
The  following tropical $\mathcal{X}$-coordinates  corresponds to an outward tripod.
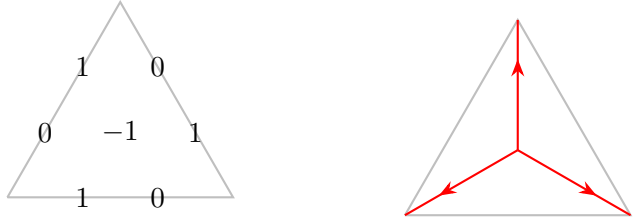
\begin{figure}[H]
\begin{tikzpicture}
\draw[gray!50, thick] (0,0)--(60:3)--(3,0)--(0,0);
\node (A) at (1,0) {$1$};
\node (B) at (2,0) {$0$};
\node at (60:1) {$0$};
\node at (60:2) {$1$};
\begin{scope}[shift={(3,0)}] 
\node at (120:1) {$1$};
\node at (120:2) {$0$};
\end{scope}
\begin{scope}[shift={(2,0)}] 
\node at (120:1) {$-1$};
\end{scope}
\end{tikzpicture}
\hspace{2cm}
\begin{tikzpicture}
\draw[gray!50, thick] (0,0)--(60:3)--(3,0)--(0,0);
 \draw [red, thick, decoration={markings, mark=at position 0.7 with {\arrow{Stealth}}}, postaction=decorate]  (30:1.732) -- (0,0);
  \draw [red, thick, decoration={markings, mark=at position 0.7 with {\arrow{Stealth}}}, postaction=decorate] (30:1.732) -- (60:3);
   \draw [red, thick, decoration={markings, mark=at position 0.7 with {\arrow{Stealth}}}, postaction=decorate] (30:1.732) --(3,0);
\end{tikzpicture}
\caption{Cluster $\mathcal{X}$-coordinates corresponding to an outward tripod.}
\end{figure}
\end{example}
The intersection pairing among reduced webs induces a pairing among tropical points
\begin{equation}
\label{can. pairing}
\mathbb{I}:~\mathcal{A}_{{\rm PGL}_3, \hat{S}}^+(\mathbb{Z}^t) \times \mathcal{P}_{{\rm PGL}_3, \hat{S}}^+(\mathbb{Z}^t) \longrightarrow \frac{1}{3} \mathbb{Z}.
\end{equation}

\noindent {\bf B. The canonical pairing from the perspective of cluster duality.} We expect that the pairing \eqref{can. pairing} coincides with the canonical pairing in the setting of cluster ensembles due to Fock and Goncharov \cite[Conj. 4.3]{FG09}. Following the notation of {\it loc.cit.}, let $\mathcal{A}$ be a cluster $\mathcal{A}$-variety and let $\mathcal{X}^\vee$ be the cluster $\mathcal{X}$-variety of Langlands dual type. For example, our main example $(\mathcal{A}_{{\rm SL}_3, \hat{S}}, \mathcal{P}_{{\rm PGL}_3, \hat{S}})$ is such a pair. 

The Fock-Goncharov duality conjecture asserts that every tropical point $l \in \mathcal{A}(\mathbb{Z}^t)$ naturally corresponds to a positive regular function $\theta_l$ on $\mathcal{X}^\vee$. Its tropicalization induces a pairing
\[
\mathbb{I}_{FG}: \mathcal{A}(\mathbb{Z}^t) \times \mathcal{X}^\vee(\mathbb{Z}^t) \longrightarrow  \mathbb{Z},\qquad  (l,m) \longmapsto \theta_l^t(m).
\]
Meanwhile, every tropical point $m\in \mathcal{X}^\vee(\mathbb{Z}^t)$ corresponds to a function $\vartheta_m$ on $\mathcal{A}$. We expect that it will give rise to the same pairing $\theta_l^t(m)=\vartheta_m^t(l)$. 

Each cluster seed ${\bf i}$ provides a coordinate coordinate system $\alpha_{\bf i}$ for $\mathcal{A}$ and a cluster coordinate system $\chi_{\bf i}$ for $\mathcal{X}^\vee$. 
Let $(a_1,\ldots, a_n)$ be the coordinate of $l\in \mathcal{A}(\mathbb{Z}^t)$ under $\alpha_{\bf i}$ and let $(x_1,\ldots, x_n)$ be the coordinates of $m\in \mathcal{X}^\vee(\mathbb{Z}^t)$ under $\chi_{\bf i}$. By the cluster duality, we shall have
\[
\langle l, m\rangle_{\bf i}:= \sum_j a_j x_j \leq  \mathbb{I}_{FG}(l,m).
\]
\begin{conj} We conjecture that 
\[
\mathbb{I}_{FG}(l,m) = \max_{\bf i} ~\langle l, m\rangle_{\bf i}.
\]
\end{conj}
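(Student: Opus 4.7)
The plan is to place the conjecture within the Gross--Hacking--Keel--Kontsevich theta-function framework and to reduce it to a wall-following argument in the associated scattering diagram. The driving observation is that the already-noted inequality $\langle l,m\rangle_{\bf i}\leq \mathbb{I}_{FG}(l,m)$ is obtained by tropicalizing the leading $g$-monomial of $\theta_l$ in seed ${\bf i}$, so equality ought to be attained by the seed whose $g$-chamber is ``seen'' by the tropical point $m$.

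First I would fix a seed ${\bf i}$ and invoke the expected positivity of theta functions to write $\theta_l=x^{g(l,{\bf i})}\cdot F^{\bf i}_l(x)$, where $F^{\bf i}_l$ is a polynomial in the $\mathcal{X}^\vee$-cluster coordinates with non-negative integer coefficients and constant term $1$, and where the $g$-vector $g(l,{\bf i})$ is identified with the tropical $\mathcal{A}$-coordinates $(a_1,\ldots,a_n)$ of $l$ in ${\bf i}$. Tropicalization then yields
\[
\mathbb{I}_{FG}(l,m) \;=\; \theta_l^t(m) \;=\; \langle l,m\rangle_{\bf i} \;+\; \max_{\beta}\,\beta\cdot m({\bf i}),
\]
where $\beta$ ranges over exponents of monomials in $F^{\bf i}_l$, including $\beta=0$, and $m({\bf i})=(x_1,\ldots,x_n)$ is the tropical coordinate vector of $m$ in seed ${\bf i}$. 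This reproves the lower bound $\mathbb{I}_{FG}(l,m)\geq \max_{\bf i}\langle l,m\rangle_{\bf i}$ and reduces the conjecture to the assertion that every ``winning'' exponent $g(l,{\bf i})+\beta$ equals the $g$-vector $g(l,{\bf i}')$ of $l$ in some other seed ${\bf i}'$. The key step would then use the broken-line description of $\theta_l$: each monomial in $F^{\bf i}_l$ is indexed by a broken line emanating from the $g$-chamber of ${\bf i}$ and terminating at a generic lift of $m$, and the chamber containing that terminal point is itself a $g$-chamber for some mutated seed ${\bf i}'$; comparing the broken-line output with the leading $g$-monomial of $\theta_l$ in ${\bf i}'$ produces the desired identity $\beta\cdot m({\bf i})=\langle l,m\rangle_{{\bf i}'}-\langle l,m\rangle_{\bf i}$.

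The main obstacle will be twofold. First, one must handle non-generic tropical points $m$ lying on tropical walls, where several chambers are simultaneously relevant, and argue that the supremum is still attained in a single seed rather than only as a limit. Second, one must control the argument beyond the finite mutation type regime, where the scattering diagram carries infinitely many walls and the candidate set of seeds is a priori infinite. For the decorated surfaces considered in this paper the cluster Donaldson--Thomas transformations of \cite{GS18,GS19}, together with finite mutation type, should confine the optimization to a controllable neighborhood of any reference ideal triangulation; in the fully general cluster setting the problem reduces to an open convex-geometric question about whether every vertex of the Newton polytope of $F^{\bf i}_l$ arises as a $g$-vector of $l$ in some mutated seed, which will likely require new structural input on theta bases.
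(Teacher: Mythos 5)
The statement you were asked to prove is the paper's final \emph{Conjecture}; the authors give no proof of it, so there is nothing of theirs to compare your argument against. Judged on its own terms, your proposal is a research plan rather than a proof, and the load-bearing steps are missing. The pointed-positivity computation $\theta_l^t(m)=\langle l,m\rangle_{\bf i}+\max_\beta \beta\cdot m({\bf i})$ with $\beta=0$ allowed only re-derives the easy inequality $\mathbb{I}_{FG}(l,m)\geq \max_{\bf i}\langle l,m\rangle_{\bf i}$ (the direction already recorded in the paper). The entire content of the conjecture is the reverse inequality, and your reduction of it --- that every maximizing exponent $g(l,{\bf i})+\beta$ equals $g(l,{\bf i}')$ for some seed ${\bf i}'$, i.e.\ that the relevant vertices of the Newton polytope of $F^{\bf i}_l$ are realized by $g$-vectors of $l$ across seeds --- is essentially a restatement of the conjecture, which you yourself flag as open at the end.

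The broken-line step as written does not close this gap for two concrete reasons. First, outside finite (mutation) type the cluster complex need not exhaust the chambers of the scattering diagram, so the chamber containing the terminal point of a broken line need not be the $g$-chamber of any seed ${\bf i}'$; your appeal to DT transformations and ``finite mutation type'' does not repair this, since the surfaces here are generally not of finite cluster type and the scattering diagram still has a non-cluster region. Second, even when the terminal chamber is a $g$-chamber, the exponent contributed by one particular broken line need not be the leading exponent $g(l,{\bf i}')$ of $\theta_l$ in that chamber, so the claimed identity $\beta\cdot m({\bf i})=\langle l,m\rangle_{{\bf i}'}-\langle l,m\rangle_{\bf i}$ is unjustified. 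Two further points are unaddressed: the identification of the tropical $\mathcal{A}$-coordinates $(a_1,\ldots,a_n)$ of $l$ with the $g$-vector of $\theta_l$ presupposes that the theta basis is parametrized by $\mathcal{A}(\mathbb{Z}^t)$ via $g$-vectors, which GHKK establish only under hypotheses; and the paper's pairing lives on the cones $\mathcal{A}^+_{{\rm PGL}_3,\hat{S}}(\mathbb{Z}^t)\times\mathcal{P}^+_{{\rm PGL}_3,\hat{S}}(\mathbb{Z}^t)$ cut out by the tropical potential, a constraint your argument never uses and which may be exactly what makes the optimization over seeds attain the value. As it stands, the proposal identifies a plausible framework but does not prove the conjecture.
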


\bibliographystyle{alphaurl-a}
\bibliography{main}

\end{document}